\pdfoutput=1
\documentclass[letterpaper]{amsart}

\usepackage[utf8]{inputenc}
\usepackage{lmodern}
\usepackage[T1]{fontenc}
\usepackage{amssymb}
\usepackage{enumitem}
\usepackage{mathtools}
\usepackage{xr}

\usepackage{graphicx}
\usepackage{tikz-cd}
\usepackage[pdfusetitle]{hyperref}
\tikzcdset{arrow style=Latin Modern}
\usepackage{cleveref}

\def\Fchoose#1#2{#1}

\mathtoolsset{mathic}
\DeclareMathAlphabet\mathbfit{OML}{cmm}{b}{it}

\newlist{enumarabic}{enumerate}{1}
\setlist[enumarabic]{font=\normalfont,label=(\arabic*),leftmargin=0.3in}
\newlist{enumroman}{enumerate}{1}
\setlist[enumroman]{font=\normalfont,label=(\roman*),leftmargin=0.3in}

\externaldocument[h@]{homog}
\def\refhomog#1{\ref*{h@#1}}
\makeatletter
\def\eqrefhomog#1{\textup{\tagform@{\ref*{h@#1}}}}
\makeatother
\def\citehomog#1{\cite[#1]{Franz:2019}}

\def\cf{\emph{cf.}}

\def\arxiv#1{\href{http://arxiv.org/abs/#1}{\texttt{arXiv:#1}}}

\makeatletter
\g@addto@macro\bfseries{\boldmath}
\makeatother


\newcommand*\xbar[1]{%
   \hbox{%
     \vbox{%
       \hrule height 0.35pt 
       \kern0.35ex
       \hbox{%
         \kern-0.15em
         \ensuremath{#1}%
         \kern-0.15em
       }%
     }%
   }%
} 

\makeatletter
\def\smallunderbrace#1{\mathop{\vtop{\m@th\ialign{##\crcr
   $\hfil\displaystyle{#1}\hfil$\crcr
   \noalign{\kern3\p@\nointerlineskip}%
   \tiny\upbracefill\crcr\noalign{\kern3\p@}}}}\limits}
\makeatother

\let\newterm\emph

\def\Z{\mathbb Z}

\def\cupone{\mathbin{\cup_1}}
\def\cuptwo{\mathbin{\cup_2}}

\def\deg#1{|#1|}

\let\shuffle\nabla

\def\kk{\Bbbk}


\let\epsilon\varepsilon
\let\phi\varphi

\def\susp{\mathbf{s}}
\def\desusp{\susp^{-1}}

\def\cc{\mathbfit{c}}
\def\xx{\mathbfit{x}}

\def\aaa{\mathfrak{a}}
\def\bbb{\mathfrak{b}}

\def\AW{AW}

\def\EE{\mathbf{E}}

\def\FF{\mathbf{F}}

\def\ha{h^{a}}
\def\hc{h^{c}}






\let\emptyset\varnothing

\def\iter#1#2{#1^{[#2]}}

\def\BB{\mathbf{B}}

\def\HH{H\mkern-2mu H}

\let\KS\varkappa
\def\eqKS{\stackrel{\KS}{=}}
\def\noeq{\mathrel{\phantom{=}}}

\DeclareMathOperator{\Der}{Der}

\def\selfmapright{\mathbin{\rotatebox{90}{$\circlearrowleft$}}}

\def\hgaop{F_{2}\mathcal{X}}

\def\jamin{\mu}
\def\jamax{\nu}

\setlistdepth{8}
\newlist{caselist}{enumerate}{8}
\setlist[caselist]{label*={\bf\arabic*.},wide,leftmargin=0.1in}

\def\cancelswith#1{$\to$~\ref{#1}}

\newcommand{\nocontentsline}[3]{}
\newcommand{\tocless}[2]{\bgroup\let\addcontentsline=\nocontentsline{}\egroup}


\theoremstyle{plain}
\newtheorem{theorem}{Theorem}[section]
\newtheorem{proposition}[theorem]{Proposition}
\newtheorem{lemma}[theorem]{Lemma}
\newtheorem{corollary}[theorem]{Corollary}

\theoremstyle{definition}

\newtheorem{remark}[theorem]{Remark}

\theoremstyle{remark}
\newtheorem*{acknowledgements}{Acknowledgements}

\numberwithin{equation}{section}

\allowdisplaybreaks[4]

\def\kk{\Bbbk}
\def\Ai{$A_{\infty}$}
\def\cupone{\mathbin{\cup_1}}
\def\cuptwo{\mathbin{\cup_2}}

\begin{document}

\title[Homotopy Gerstenhaber algebras are shc]{Homotopy Gerstenhaber algebras\\are strongly homotopy commutative}
\author{Matthias Franz}
\thanks{The author was supported by an NSERC Discovery Grant.}
\address{Department of Mathematics, University of Western Ontario,
  London, Ont.\ N6A\;5B7, Canada}
\email{mfranz@uwo.ca}

\subjclass[2010]{Primary 16E45; secondary 57T30}

\begin{abstract}
  We show that any homotopy Gerstenhaber algebra
  is naturally a strongly homotopy commutative (shc) algebra in the sense of Stasheff--Halperin
  with a homotopy-associative structure map.
  In the presence of certain additional operations corresponding
  to a \(\cupone\)-product on the bar construction,
  the structure map becomes homotopy commutative, so that one obtains an shc algebra in the sense of Munkholm.  
\end{abstract}

\maketitle

\section{Introduction}

Let \(A\) and~\(B\) be augmented dgas over a commutative ring~\(\kk\).
Recall that an \Ai~map~\(f\colon A\Rightarrow B\)
is a map of differential graded coalgebras~\(\BB A\to\BB B\) between the bar constructions of~\(A\) and~\(B\).
According to Stasheff--Halperin~\cite[Def.~8]{StasheffHalperin:1970},
the dga~\(A\) is a \newterm{strongly homotopy commutative (shc) algebra} if
\begin{enumroman}
\item
  \label{shc-shm}
  the multiplication map~\(\mu_{A}\colon A\otimes A\to A\) extends to an \Ai~morphism
  \begin{equation*}
    \Phi\colon A\otimes A \Rightarrow A
  \end{equation*}
  in the sense that the base component~\(\Phi_{(1)}\colon A\otimes A\to A\) of~\(\Phi\) equals \(\mu\).
\end{enumroman}

Munkholm~\cite[Def.~4.1]{Munkholm:1974} additionally requires the following:
\begin{enumroman}[resume]
\item
  \label{shc-unit}
  The map~\(\eta_{A}\colon \kk\to A\), \(1\mapsto 1\) is a unit for~\(\Phi\), that is,
  \begin{equation*}
    \Phi\circ(1_{A}\otimes\eta_{A}) = \Phi\circ(\eta_{A}\otimes 1_{A}) = 1_{A}. 
  \end{equation*}
\item
  \label{shc-ass}
  The \Ai~map~\(\Phi\) is homotopy associative, that is,
  \begin{equation*}
    \Phi\circ(\Phi\otimes 1_{A}) \simeq \Phi\circ(1_{A}\otimes\Phi)\colon A\otimes A\otimes A\Rightarrow A.
  \end{equation*}
\item
  \label{shc-com}
  The map~\(\Phi\) is homotopy commutative, that is,
  \begin{equation*}
    \Phi\circ T_{A,A} \simeq \Phi\colon A\otimes A\Rightarrow A.
  \end{equation*}
\end{enumroman}
Note that we write \(1_{A}\) for the identity map of~\(A\)
and \(T_{A,A}\colon A\otimes A\to A\otimes A\) for the the transposition of factors.
Also, the compositions and tensor products above are those of \Ai~maps.

\goodbreak

Using acyclic models,
Munkholm~\cite[Prop.~4.7]{Munkholm:1974} has constructed a natural shc structure
on the normalized singular cochain complex~\(C^{*}(X)\) of a space~\(X\).
Slightly earlier, Gugenheim--Munkholm~\cite{GugenheimMunkholm:1974} have given
a recursive formula for~\(\Phi\) in this case based on the Eilenberg--Zilber contraction, see \Cref{rem:ez}.

Singular cochain complexes are the main example of homotopy Gerstenhaber algebras (hgas)
besides the Hochschild cochains of associative algebras.
Recall that an hga is essentially an augmented dga~\(A\) with maps
\begin{equation*}
  E_{k}\colon A\otimes A^{\otimes k}\to A
\end{equation*}
for~\(k\ge1\) that induce a dga structure on the bar construction~\(\BB A\) compatible with the diagonal.
See \Cref{sec:def-hga} for a reformulation in terms of the identities the operations~\(E_{k}\) have to satisfy.
Kadeishvili~\cite{Kadeishvili:2003} has identified
certain additional operations on an hga~\(A\)
that allow to define a \(\cupone\)-product on~\(\BB A\). We call such an hga \newterm{extended}, see \Cref{sec:extended-hga}.
Singular cochain algebras are extended hgas.

As pointed out by Kadeishvili~\cite[Sec.~1.6]{Kadeishvili:2004},
it follows from general considerations that any hga admits
an shc structure in the sense of Stasheff--Halperin
such that the composition of~\(\BB\Phi\) with the shuffle map,
\begin{equation}
  \label{eq:comp-shuffle-BPhi}
  \BB A \otimes \BB A \stackrel{\shuffle}\longrightarrow \BB(A\otimes A) \xrightarrow{\BB\Phi} \BB A,
\end{equation}
is homotopic to the product on~\(\BB A\) determined by the hga structure.

Our main result is the following. In the companion paper~\cite{Franz:2019}
we apply it to determine the cohomology rings of a large class of homogeneous spaces.

\begin{theorem}
  \label{thm:hga-shc}
  Let \(A\) be an hga.
  \begin{enumroman}
  \item
    There is a canonical shc structure on~\(A\)
    satisfying properties~\ref{shc-shm},~\ref{shc-unit} and~\ref{shc-ass} of the definition above.
  \item
    If \(A\) is extended, then the shc structure additionally satisfies property~\ref{shc-com}.
  \item
    All structure maps commute with morphisms of (extended) hgas.
  \item 
    The composition~\eqref{eq:comp-shuffle-BPhi} is exactly the multiplication map on~\(\BB A\).
  \end{enumroman}
\end{theorem}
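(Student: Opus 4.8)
The plan is to package an shc structure on~$A$ as a single twisting cochain and to build it from the hga operations, arranging assertion~(iv) by fiat. Since $\BB A$ is cofree as a graded coalgebra on~$\susp\bar A$, a morphism of differential graded coalgebras $\BB(A\otimes A)\to\BB A$ --- equivalently, an \Ai~map $\Phi\colon A\otimes A\Rightarrow A$ --- is the same datum as a twisting cochain $t\colon\BB(A\otimes A)\to A$, and $\Phi_{(1)}$ is read off from the length-one component of~$t$. The hga identities of \Cref{sec:def-hga} make $\BB A$ a differential graded bialgebra, so its product $\mu_{\BB A}\colon\BB A\otimes\BB A\to\BB A$ is itself a coalgebra morphism and $\iota_{A}\circ\mu_{\BB A}$ is a twisting cochain on $\BB A\otimes\BB A$, where $\iota_{A}\colon\BB A\to A$ denotes the universal twisting cochain. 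The bridge between the two sides is the Eilenberg--Zilber contraction $(\shuffle,\AW,H)$ of $\BB(A\otimes A)$ onto $\BB A\otimes\BB A$: its shuffle map~$\shuffle$ is a coalgebra morphism, $\AW$ is a retraction of it, and all of its maps are natural in~$A$.

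First I would construct~$t$ by induction on bar length, in the spirit of the Gugenheim--Munkholm recursion with the operations~$E_{k}$ playing the role of the Eilenberg--Zilber data. Its length-one component is prescribed to be~$\mu_{A}$ on the augmentation ideal, which forces property~\ref{shc-shm}; and on all of $\BB A\otimes\BB A$ I would impose $t\circ\shuffle=\iota_{A}\circ\mu_{\BB A}$, which is precisely assertion~(iv). These two prescriptions are compatible because $\shuffle$ produces only the ``one-sided'' entries $a\otimes 1$ and $1\otimes b$, on which they agree. The remaining components are determined recursively: in bar length~$n$ the twisting-cochain (Maurer--Cartan) equation pins~$t^{(n)}$ down in terms of the lower components up to a cocycle~$R^{(n)}$, itself built from those lower components, and this $R^{(n)}$ vanishes on the image of~$\shuffle$ --- since $\iota_{A}\circ\mu_{\BB A}$ already satisfies Maurer--Cartan on $\BB A\otimes\BB A$ --- so it is bounded canonically by the contracting homotopy~$H$. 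Because every ingredient ($E_{k}$, $\mu_{\BB A}$, $\shuffle$, $\AW$, $H$) is natural in~$A$, so is the resulting~$\Phi$, giving assertion~(iii); and assertion~(iv) holds by construction.

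Property~\ref{shc-unit} in lengths greater than one is inherited from the strict unitality of the~$E_{k}$: every higher component~$\Phi_{(n)}$ carries a factor that vanishes as soon as one of its arguments equals~$1$. For property~\ref{shc-ass} I would compare $\Phi\circ(\Phi\otimes 1_{A})$ and $\Phi\circ(1_{A}\otimes\Phi)$ after composition with the iterated shuffle map $\BB A\otimes\BB A\otimes\BB A\to\BB(A\otimes A\otimes A)$: by assertion~(iv) and the associativity of the shuffle map, both become iterated products on the honest differential graded algebra~$\BB A$, which agree. Hence the two triple \Ai~maps coincide on the image of the iterated shuffle, and a standard transport along the corresponding Eilenberg--Zilber contraction of $\BB(A\otimes A\otimes A)$ onto $\BB A\otimes\BB A\otimes\BB A$ upgrades this to a homotopy of \Ai~maps.

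For part~(ii), the additional operations of an extended hga equip~$\BB A$ with a $\cupone$-product, hence with a natural chain homotopy between $\mu_{\BB A}$ and $\mu_{\BB A}\circ T_{\BB A,\BB A}$. Since the shuffle map is cocommutative up to the standard homotopy, which is compatible with this $\cupone$-product, transporting this homotopy along the contraction --- just as in the associativity step --- yields $\Phi\circ T_{A,A}\simeq\Phi$. The real obstacle throughout is the sign-and-combinatorics verification that the recursively defined~$t$ is genuinely a twisting cochain, equivalently that the obstruction~$R^{(n)}$ vanishes on the image of the shuffle map in every bar length; this is where the hga identities have to be made to cancel against the shuffle--Alexander--Whitney bookkeeping. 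For part~(ii) there is the additional job of identifying the homotopy coming from the extended structure and verifying its compatibility with the shuffle map.
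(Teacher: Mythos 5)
Your strategy is genuinely different from the paper's: you build $\Phi$ and the two homotopies by obstruction theory along a contraction of $\BB(A\otimes A)$ onto $\BB A\otimes\BB A$, prescribing the restriction to the image of the shuffle map in advance, whereas the paper writes down closed formulas for every component of $\Phi$, $\ha$ and $\hc$ in terms of the operations $E_{k}$ and $F_{kl}$ and verifies the twisting-cochain identities by direct computation in the appendices. Your route is essentially the ``more conceptual proof'' that the concluding remark asks for, and several ingredients are right (the unit property from normalization of the $E_{k}$; reducing associativity to the associativity of $\mu_{\BB A}$ on the image of $\iter{\shuffle}{3}$, which is the paper's \Cref{thm:BA-product-Phi-general}). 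But the step that carries all the weight is asserted rather than proved, and as stated it does not go through. The twisting-cochain equation in bar length $n$ determines $t^{(n)}$ only up to the differential $\delta(f)=d_{A}f\pm f\,d_{\mathrm{int}}$ built from the length-preserving part of the bar differential; the obstruction $R^{(n)}$ is a $\delta$-cocycle vanishing on the image of $\shuffle$. To conclude that it is a $\delta$-coboundary you invoke the contracting homotopy $H$, but $H$ is a homotopy for the \emph{full} differential of $\BB(A\otimes A)$ and shifts bar length, so the identity $R^{(n)}=R^{(n)}(dH+Hd)$ does not decompose into statements about $\BB_{n}(A\otimes A)$ alone and the recursion does not close. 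Making this rigorous is essentially the Gugenheim--Munkholm transfer (\Cref{rem:ez}), which needs side conditions on the contraction, compatibility with the bar filtration, and its own induction; none of this is supplied, and the paper itself only reports computational evidence that such a recursion reproduces its $\Phi$. The same unproved transfer is used again to promote agreement of $\Phi\circ(\Phi\otimes1)$ and $\Phi\circ(1\otimes\Phi)$ on the image of the iterated shuffle to an shm homotopy, and once more for commutativity.

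Two further points on part (ii). You need the $F_{kl}$ to define a coalgebra homotopy on $\BB A$ from the opposite product to the product over an arbitrary commutative ring $\kk$; Kadeishvili's construction is over $\Z_{2}$, and the general-$\kk$ statement (the paper's \Cref{thm:cup1-BA}) is itself a sign-sensitive verification that your argument presupposes. Also, the shuffle map is strictly symmetric, $\BB(T_{A,A})\circ\shuffle=\shuffle\circ T_{\BB A,\BB A}$, so the datum to be transferred is the $\cupone$-homotopy between $\mu_{\BB A}$ and $\mu_{\BB A}\circ T_{\BB A,\BB A}$ restricted to the image of $\shuffle$, not a ``cocommutativity homotopy of the shuffle map''; that phrasing suggests a confusion with the Alexander--Whitney side of the contraction. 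In short, the proposal identifies a plausible conceptual skeleton, but the extension/transfer arguments that would replace the paper's explicit formulas and appendix computations are exactly where the content of the theorem lies, and they are missing.
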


The \Ai~map~\(\Phi\) is defined in \Cref{sec:ai-map}
and the associativity and commutativity homotopies
in Sections~\ref{sec:def-ha} and \ref{sec:def-hc}.
The verification that they satisfy the required identities
is an elementary, but very lengthy computation that has been relegated to the appendices.
The claimed naturality will be obvious from the construction.
We conclude in \Cref{sec:poly} with results about \Ai~maps and shc maps from polynomial algebras to dgas and hgas, respectively.

\begin{acknowledgements}
  Maple and SymPy~\cite{Sympy} were used to check our formulas.
  I thank Chris Hall for his help with Python programming.
\end{acknowledgements}

\section{Preliminaries}

From now on, we call \Ai~maps \newterm{strongly homotopy multiplicative (shm)}
because this terminology pairs better with ``shc algebras''. We refer to the companion paper~\citehomog{Secs.~\refhomog{sec:tw}--\refhomog{sec:shm-tensor}}
for our general conventions, the definitions of twisting cochains, twisting cochain homotopies
and the corresponding families as well as those of shm maps and their homotopies, compositions and tensor products.
We work over an arbitrary commutative ring~\(\kk\) with unit.

We recall from~~\citehomog{Sec.~\refhomog{sec:tw}} our ``\(\eqKS\)''~notation which
alleviates us from explicitly specifying signs coming from the Koszul sign rule.
For example, if we define a map~\(F\colon A\otimes B\otimes C\to A'\otimes B'\)
between complexes by
\begin{align}
  F(a,b,c) &\eqKS f(c) \otimes g(a,b), \\
  \shortintertext{then we mean}
  F(a,b,c) &= (-1)^{(\deg{a}+\deg{b}+\deg{g})\deg{c}}\, f(c) \otimes g(a,b).
\end{align}
Composition of maps is distributed over tensor products. For instance,
\begin{equation}
  G(a,b) \eqKS f_{1}(f_{2}(a))\otimes g_{1}(g_{2}(b))
\end{equation}
defines the element
\begin{equation}
  \label{eq:convention-eqKS}
  G = f_{1}\,f_{2} \otimes g_{1}\,g_{2}
  = (-1)^{\deg{f_{2}}\deg{g_{1}}}\,(f_{1}\otimes g_{1})\,(f_{2}\otimes g_{2})
\end{equation}
in the endomorphism operad.

\section{Homotopy Gerstenhaber algebras}
\label{sec:hga}

Homotopy Gerstenhaber algebras were introduced by Voronov--Gerstenhaber~\cite[\S 8]{VoronovGerstenhaber:1995}.
For the convenience of the reader, we reproduce the definition of an (extended) homotopy Gerstenhaber algebra
from~\citehomog{Sec.~\refhomog{sec:hga}}.

\subsection{Definition of an hga}
\label{sec:def-hga}

A \newterm{homotopy Gerstenhaber algebra} (homotopy G-alge\-bra, \newterm{hga})
is an augmented dga~\(A\) with certain operations
\begin{equation}
  E_{k}\colon A \otimes A^{\otimes k}\to A,
  \qquad
  a\otimes b_{1}\otimes\dots\otimes b_{k}\mapsto E_{k}(a;b_{1},\dots,b_{k})
\end{equation}
of degree~\(\deg{E_{k}}=-k\) for~\(k\ge1\).
It is often convenient to use the additional operation~\(E_{0}=1_{A}\).
These operations satisfy the following properties.
\begin{enumroman}
\def\newtag#1{\tag{#1}}
\item
  All~\(E_{k}\) with~\(k\ge1\) take values in the augmentation ideal~\(\bar A\) and vanish
  if any argument is equal to~\(1\).
\setcounter{equation}{99}
  \begin{align}
    \newtag{ii}
    \label{eq:def-Ek-d}
    d(E_{k})(a;b_{\bullet})
    &\eqKS b_{1}\,E_{k-1}(a;b_{\bullet})
    + \sum_{m=1}^{k-1}(-1)^{m}\,E_{k-1}(a;b_{\bullet},b_{m}b_{m+1},b_{\bullet}) \\
    \notag &\qquad + (-1)^{k}\,E_{k-1}(a;b_{\bullet})\,b_{k}.
  \end{align}
  for all~\(k\ge1\) and all~\(a\),~\(b_{1}\),~\dots,~\(b_{k}\in A\).
  \begin{equation}
    \newtag{iii}
    \label{eq:Eprodfirstarg}
    E_{k}(a_{1}a_{2};b_{\bullet}) \eqKS \!\!\sum_{k_{1}+k_{2}=k}\!\! E_{k_{1}}(a_{1};b_{\bullet})\,E_{k_{2}}(a_{2};b_{\bullet})
  \end{equation}
  for~\(k\ge0\) and all~\(a_{1}\),~\(a_{2}\),~\(b_{1}\),~\dots,~\(b_{k}\in A\),
  where the sum is over all decompositions of~\(k\) into two non-negative integers.
  \begin{multline}
    \newtag{iv}
    \label{eq:formula-Ek-El}
    E_{l}(E_{k}(a;b_{\bullet});c_{\bullet}) \eqKS \\ \sum_{\substack{i_{1}+\dots+i_{k}+{}\\j_{0}+\dots+j_{k}=l}}
    \!\!(-1)^{\epsilon}\,
    E_{n}\bigl(a;\underbrace{c_{\bullet}}_{j_{0}},E_{i_{1}}(b_{1};c_{\bullet}),\underbrace{c_{\bullet}}_{j_{1}},
    \dots,\underbrace{c_{\bullet}}_{j_{k-1}},E_{i_{k}}(b_{k};c_{\bullet}),\underbrace{c_{\bullet}}_{j_{k}}\bigr),
  \end{multline}
  for all~\(k\),~\(l\ge0\) and all~\(a\),~\(b_{1}\),~\dots,~\(b_{k}\),~\(c_{1}\),~\dots,~\(c_{l}\in A\),
  where the sum is over all decompositions of~\(l\) into \(2k+1\)~non-negative integers,
\setcounter{equation}{1}
  \begin{equation}
    n = k + \sum_{t=0}^{k}j_{t}
  \qquad\text{and}\qquad
    \epsilon = \sum_{s=1}^{k}i_{s}\Bigl(k+\sum_{t=s}^{k}j_{t}\Bigr) + \sum_{t=1}^{k}t\,j_{t}.
  \end{equation}
\end{enumroman}
A \newterm{morphism of hgas} is a morphism~\(f\colon A\to B\) of augmented dgas
that is compatible with the hga operations in the obvious way.  

\begin{remark}
  \label{rem:hgaop-basis}
  By the properties~\eqref{eq:Eprodfirstarg} and~\eqref{eq:formula-Ek-El} as well as multilinearity, 
  we can rewrite any expression formed within an hga
  as a linear combination of terms~\(W\) such that no sums or scalar multiples occur inside~\(W\) and such that
  the first argument of any operation~\(E_{k}\) appearing in~\(W\)
  is a single variable and neither a product nor another hga operation.
  When we speak of the terms \newterm{appearing} in some expression within an hga,
  we mean the terms appearing in such an expansion.

  An expansion of this kind is actually unique and corresponds to a \(\kk\)-basis
  for the operad~\(\hgaop\) governing homotopy Gerstenhaber algebras,
  compare~\cite[Sec.~4]{McClureSmith:2003} and~\cite[\S 1.6.6]{BergerFresse:2004}.
\end{remark}

\subsection{Extended hgas}
\label{sec:extended-hga}

In~\cite{Kadeishvili:2003} Kadeishvili introduced the notion of an `extended hga'
as an hga~\(A\) defined over~\(\kk=\Z_{2}\) that admits certain additional operations~\(E^{i}_{kl}\). 
Based on this, he constructed \(\cup_{i}\)-products on~\(\BB A\) for all~\(i\ge1\).
We will only need the family~\(F_{kl} = E^{1}_{kl}\), but for coefficients in any~\(\kk\).
We therefore define an hga to be \emph{extended} if it comes with a family of operations
\begin{equation}
  F_{kl}\colon A^{\otimes k}\otimes A^{\otimes l}\to A
\end{equation}
of degree~\(\deg{F_{kl}}=-(k+l)\) for~\(k\),~\(l\ge 1\), satisfying the following conditions:
The values of all operations~\(F_{kl}\) lie in the augmentation ideal~\(\bar A\) and vanish if any argument is equal to~\(1\in A\).
The differential of~\(F_{kl}\) is given by
\begin{equation}
  \label{eq:d-Fkl}
  d(F_{kl})(a_{\bullet};b_{\bullet}) = A_{kl} + (-1)^{k}\,B_{kl}
\end{equation}
for all~\(a_{1}\),~\dots,~\(a_{k}\),~\(b_{1}\),~\dots,~\(b_{l}\in A\),
where
\begin{align}
  A_{1l} &= E_{l}(a_{1};b_{\bullet}), \\
  A_{kl} &\eqKS a_{1}\,F_{k-1,l}(a_{\bullet};b_{\bullet})
    + \sum_{i=1}^{k-1} (-1)^{i}\,F_{k-1,l}(a_{\bullet},a_{i}a_{i+1},a_{\bullet};b_{\bullet}) \\
  \notag &\qquad + \sum_{j=1}^{l} (-1)^{k}\, F_{k-1,j}(a_{\bullet};b_{\bullet})\,E_{l-j}(a_{k};b_{\bullet}) \\
\shortintertext{for~\(k\ge2\), and}
  B_{k1} &\eqKS -E_{k}(b_{1};a_{\bullet}), \\
  B_{kl} &\eqKS \sum_{i=0}^{k-1} E_{i}(b_{1};a_{\bullet})\,F_{k-i,l-1}(a_{\bullet};b_{\bullet})
    + \sum_{j=1}^{l-1} (-1)^{j}\,F_{k,l-1}(a_{\bullet};b_{\bullet},b_{j}b_{j+1},b_{\bullet}) \\
    \notag &\qquad + (-1)^{l}\,F_{k,l-1}(a_{\bullet};b_{\bullet})\,b_{l}
\end{align}
for~\(l\ge2\), \cf~\cite[Def.~2]{Kadeishvili:2003}.

The operation~\(\cuptwo=-F_{11}\) is a \newterm{\(\cuptwo\)-product} for~\(A\) in the sense that
\begin{equation}
  \label{eq:cuptwo}
  d(\cuptwo)(a;b) = a\cupone b +(-1)^{\deg{a}\deg{b}}\,b\cupone a
\end{equation}
for all~\(a\),~\(b\in A\).
This implies that the Gerstenhaber bracket in~\(H^{*}(A)\) is trivial,
compare \citehomog{eq.~\eqrefhomog{eq:def-gerstenhaber-bracket}}.

A morphism of extended hgas is a morphism of hgas that commutes with all operations~\(F_{kl}\), \(k\),~\(l\ge1\).

Cochain algebras of simplicial sets, in particular singular cochain algebras of topological spaces,
are naturally extended hgas, compare~\citehomog{Sec.~\refhomog{sec:cochains-hga}}.

\section{The shm map}
\label{sec:ai-map}

We define the family of maps
\begin{equation}
  \Phi_{(n)}\colon (A\otimes A)^{\otimes n}\to A
\end{equation}
by
\begin{equation}
  \label{eq:def-Phi-n}
  \Phi_{(n)}( a_{\bullet}\otimes b_{\bullet} ) \eqKS (-1)^{n-1} \!\!\!
  \sum_{\substack{j_{1}+\dots+j_{n}\\=n-1}} \!\!\! E_{j_{1}}(a_{1};b_{\bullet})\cdots E_{j_{n}}(a_{n};b_{\bullet})\,b_{n}
\end{equation}
for~\(n\ge0\), where the sum is over all decompositions of~\(n-1\) into \(n\)~non-negative integers such that
\begin{equation}
  \label{eq:seq-def-shc-tw}  
  \forall\; 1\le s\le n \qquad j_{1}+\dots+j_{s} < s.
\end{equation}
This condition means that the arguments of any term~\(E_{j_{s}}(a_{s};\dots)\) are \(b\)-variables with indices strictly smaller than~\(s\).
It implies \(j_{1}=0\), so that each summand starts with the variable~\(a_{1}\).
Omitting the arguments~\(a_{\bullet}\otimes b_{\bullet}=a_{1}\otimes b_{1}\),~\(a_{2}\otimes b_{2}\),~\dots,
the components of~\(\Phi\) look as follows in small degrees:
\begin{align}
  \Phi_{(1)} &\eqKS
a_{{1}}\, b_{{1}},
\\
\label{eq:def-Phi-2}
  \Phi_{(2)} &\eqKS
-a_{{1}}\, E_{{1}} ( a_{{2}};b_{{1}} ) \, b_{{2}},
  \\
  \Phi_{(3)} &\eqKS
a_{{1}}\, E_{{1}} ( a_{{2}};b_{{1}} ) \, E_{{1}} ( a_{{3}};b_{{2}} ) \, b_{{3}}
+ a_{{1}}\, a_{{2}}\, E_{{2}} ( a_{{3}};b_{{1}},b_{{2}} ) \, b_{{3}}.
\end{align}
For~\(n\ge1\) the number of summands in~\(\Phi_{(n)}\) is the Catalan number~\(C_{n-1}\).

\begin{proposition}
  \label{thm:hga-shc-tw}
  The~\(\Phi_{(n)}\) assemble to an shm map~\(\Phi\colon A\otimes A\Rightarrow A\)
  that satisfies properties~\ref{shc-shm} and~\ref{shc-unit} of the definition of an shc algebra.
\end{proposition}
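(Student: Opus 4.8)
The plan is to establish the three assertions of the proposition separately: that the base component $\Phi_{(1)}$ equals the multiplication $\mu_{A}$, that the maps $\Phi_{(n)}$ from \eqref{eq:def-Phi-n} assemble into an shm map, and that $\eta_{A}$ is a two-sided unit for~$\Phi$. The first is immediate: the only decomposition of $n-1=0$ into a single non-negative integer is $j_{1}=0$, and $E_{0}=1_{A}$, so \eqref{eq:def-Phi-n} gives $\Phi_{(1)}(a_{1}\otimes b_{1})=a_{1}\,b_{1}=\mu_{A}(a_{1}\otimes b_{1})$; this is property~\ref{shc-shm}.

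To prove that the $\Phi_{(n)}$ assemble to an shm map $\Phi\colon A\otimes A\Rightarrow A$, I would verify, for each~$n$, the identity defining an shm map in the component form set out in the companion paper: the sum of $d(\Phi_{(n)})$ and of the terms obtained by precomposing $\Phi_{(n)}$ with the internal differential of $A\otimes A$ in a single tensor factor must equal the sum of the products $\mu_{A}\circ(\Phi_{(i)}\otimes\Phi_{(j)})$ over $i+j=n$ with $i,j\ge1$ and of the terms obtained by precomposing $\Phi_{(n-1)}$ with the multiplication of two adjacent tensor factors of the source $A\otimes A$. To check this I would expand $d(\Phi_{(n)})$ by the Leibniz rule, inserting the hga identity \eqref{eq:def-Ek-d} for each $d(E_{k})$ together with the differentials and products of $A$ and of $A\otimes A$; distribute the quadratic terms $\mu_{A}\circ(\Phi_{(i)}\otimes\Phi_{(j)})$ using \eqref{eq:Eprodfirstarg}; and normalize the operations $E_{k}$ appearing in the source-multiplication terms---which are applied to a product $a_{s}a_{s+1}$ in the first slot and to a product $b_{t}b_{t+1}$ among the remaining slots---using \eqref{eq:Eprodfirstarg} together with the $b_{t}b_{t+1}$-terms of \eqref{eq:def-Ek-d}. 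Each resulting summand is then indexed by a sequence $(j_{1},\dots)$ governed by ballot-type constraints as in \eqref{eq:seq-def-shc-tw}, and one checks that all summands cancel in pairs. This is an elementary but very lengthy computation; organizing the cancellation and keeping track of the many families of terms together with their Koszul signs is the main obstacle, and I would carry the details out in full in the appendices.

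For the unit property~\ref{shc-unit}, I would use that $\eta_{A}\colon\kk\to A$ is a strict morphism of augmented dgas, so that $1_{A}\otimes\eta_{A}\colon A\otimes\kk\cong A\to A\otimes A$ and $\eta_{A}\otimes 1_{A}$ are strict too; hence the composite shm maps $\Phi\circ(1_{A}\otimes\eta_{A})$ and $\Phi\circ(\eta_{A}\otimes 1_{A})$ have as their $n$-th components the maps $\Phi_{(n)}(a_{1}\otimes1,\dots,a_{n}\otimes1)$ and $\Phi_{(n)}(1\otimes b_{1},\dots,1\otimes b_{n})$, respectively. In the first case every $b$-variable in \eqref{eq:def-Phi-n} equals~$1$, so by the hga axiom that any $E_{k}$ with $k\ge1$ vanishes as soon as one of its arguments is~$1$, every factor $E_{j_{s}}$ with $j_{s}\ge1$ vanishes; since $j_{1}+\dots+j_{n}=n-1$, only $n=1$ contributes, and there $\Phi_{(1)}(a_{1}\otimes1)=a_{1}\cdot1=a_{1}$. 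In the second case every first argument equals~$1$, so $E_{j_{s}}(1;b_{\bullet})=0$ for $j_{s}\ge1$ while $E_{0}(1)=1$, again only $n=1$ contributes, and $\Phi_{(1)}(1\otimes b_{1})=1\cdot b_{1}=b_{1}$. In both cases the composite shm map has base component $1_{A}$ and vanishing higher components, hence equals the identity shm map $1_{A}$, which gives property~\ref{shc-unit}.
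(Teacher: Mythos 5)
Your proposal follows the paper's proof in all essentials: the identification $\Phi_{(1)}=\mu_{A}$, the deferral of the twisting-cochain identity to a lengthy term-by-term cancellation carried out in an appendix, and the argument for the unit property via the vanishing of the operations $E_{k}$ with $k\ge1$ on arguments equal to $1$ all match what the paper does. The one point you omit is the normalization condition required of a twisting family: $\Phi_{(n)}$ must vanish for $n\ge2$ whenever a \emph{single} factor $a_{i}\otimes b_{i}$ equals $1\otimes1$, since otherwise the $\Phi_{(n)}$ do not descend to the normalized bar construction $\BB(A\otimes A)$ and hence do not yet define an shm map. The check is short and the paper spells it out: for $i<n$ the variable $b_{i}=1$ occurs as an argument of some factor $E_{j_{s}}$ with $j_{s}\ge1$, which therefore vanishes; for $i=n$ the constraint \eqref{eq:seq-def-shc-tw} at $s=n-1$ forces $j_{n}\ge1$, so $E_{j_{n}}(a_{n};\dots)=E_{j_{n}}(1;\dots)=0$. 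You should add this verification; everything else is in order.
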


\begin{proof}
  The verification of property~\ref{shc-shm} is a direct computation, see \Cref{sec:hga-shc-tw}.
  For property~\ref{shc-unit} we observe that the normalization condition for the hga operations
  implies that \(\Phi_{(n)}\) vanishes for~\(n>1\) if all~\(a_{i}\) or all~\(b_{j}\) equal \(1\).
  Similarly, \(\Phi_{(n)}\) vanishes for~\(n>1\) if~\(a_{i}\otimes b_{i}=1\otimes 1\) for some~\(i\),
  as required by the definition of a twisting family, see~\citehomog{eq.~\eqrefhomog{eq:tw-fam-1-bis}}:
  For~\(i<n\) we would have \(b_{i}=1\)
  as an argument to some \(E_{k}\)-term with~\(k\ge1\). For~\(i=n\) the term~\(E_{j_{n}}(a_{n};\dots)\)
  vanishes since \(a_{n}=1\) and \(j_{n}\ge1\) as there is at least one more argument, namely \(b_{n-1}\).
\end{proof}

\begin{remark}
  \label{rem:ez}
  Let \(X\) be a simplicial set.
  Gugenheim--Munkholm~\cite[Thm.~4.1\(_{*}\)]{GugenheimMunkholm:1974} have given a recursive formula
  for an extension of the cup product to an shm map~\(C^{*}(X)\otimes C^{*}(X)\Rightarrow C^{*}(X)\).
  It is based on 
  the Eilenberg--Zilber contraction
  \begin{equation}
    C_{*}(X)\otimes C_{*}(X) \overset{\shuffle}{\underset{\AW}{\rightleftarrows}} C_{*}(X\times X)
    \, {} \selfmapright \, h.
  \end{equation}
  Computer calculations suggest that this algorithm leads to our map~\(\Phi\)
  if one uses a slight modification of the classical homotopy~\(h\colon 1_{C_{*}(X\times X)}\simeq\shuffle\AW\) defined by Eilenberg--Mac\,Lane.
  That the hga structure on \(1\)-reduced simplicial sets can be defined via the Gugenheim--Munkholm formula
  follows from combining results of Hess--Parent--Scott--Tonks~\cite[Sec.~5]{HessEtAl:2006} and the author~\cite[App.~A]{Franz:szczarba2}.
\end{remark}

\begin{proposition}
  \label{thm:BA-product-Phi}
  The composition
  \begin{equation*}
    \BB A\otimes \BB A \stackrel{\shuffle}\longrightarrow \BB(A\otimes A) \xrightarrow{\BB\Phi} \BB A
  \end{equation*}
  coincides with the product on~\(\BB A\) given by the hga structure of~\(A\).
\end{proposition}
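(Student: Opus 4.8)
The plan is to compute both maps on an arbitrary element of $\BB A \otimes \BB A$ and check they agree term by term. Recall that a morphism of dg coalgebras $\BB A \otimes \BB A \to \BB A$ is determined by its corestriction to the cogenerators $\desusp \bar A$, and likewise the shm map $\BB\Phi\colon \BB(A\otimes A)\to\BB A$ is the unique coalgebra map whose corestriction assembles the components $\Phi_{(n)}\colon (A\otimes A)^{\otimes n}\to A$. Therefore I would first reduce both sides of the claimed identity to their corestrictions $\BB A\otimes \BB A\to \desusp\bar A$. On the right-hand side, the hga product on $\BB A$ is by definition the coalgebra map whose corestriction is built from the operations $E_k$; explicitly, writing a bar element as $[a_1|\dots|a_p]\otimes[b_1|\dots|b_q]$, the corestriction of the hga product picks out (up to the Koszul signs handled by $\eqKS$) sums of terms $a_1\cdots$ interleaved with $E_{k}(a_i;b_\bullet)$ factors — this is exactly the combinatorial shape of the operadic description recalled in \Cref{rem:hgaop-basis}.

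Next I would unwind the left-hand composite. The shuffle map $\shuffle\colon \BB A\otimes \BB A\to \BB(A\otimes A)$ sends $[a_1|\dots|a_p]\otimes[b_1|\dots|b_q]$ to the signed sum over all $(p,q)$-shuffles of the tensor word in $A\otimes A$ whose entries are the $a_i\otimes 1$ and $1\otimes b_j$ in shuffled order. Applying $\BB\Phi$ and corestricting means summing $\Phi_{(n)}$ over all ways of cutting this shuffled word; but $\Phi_{(n)}$ is defined by \eqref{eq:def-Phi-n} with the support condition \eqref{eq:seq-def-shc-tw}, and crucially $\Phi_{(n)}$ vanishes on any tensor factor equal to $1\otimes 1$ (as noted in the proof of \Cref{thm:hga-shc-tw}), and the hga operations vanish when an argument is $1$. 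These two vanishing properties should collapse the double sum (over shuffles and over cuts) dramatically: a surviving term must consume the $a$'s and $b$'s in their original orders, and within a single $\Phi_{(n)}$-block the constraint \eqref{eq:seq-def-shc-tw} forces precisely the interleaving pattern "$a_i$ followed by a block of $b$'s with smaller indices fed into an $E_k$", which is the same pattern produced on the right. So the heart of the argument is a bijection between the surviving monomials on the two sides, together with a check that the signs match.

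I would carry this out in steps: (1) write down the corestriction of $\BB\Phi\circ\shuffle$ as an explicit double sum indexed by shuffles and compositions; (2) use the normalization (vanishing on $1$) to discard all shuffle-terms that don't respect the orders of the $a$'s and $b$'s and all cut-terms placing a $1\otimes 1$; (3) after this reduction, identify the index set of surviving terms with the index set describing the hga product's corestriction — both turn out to be "planar interleavings" of the two bar words — and (4) track the Koszul signs through $\shuffle$, through \eqref{eq:def-Phi-n}, and through the coalgebra extension, confirming they reproduce the signs in the hga product formula. The main obstacle I anticipate is the sign bookkeeping in step (4): the shuffle sign, the sign $(-1)^{n-1}$ in $\Phi_{(n)}$, and the degree shifts from suspension/desuspension all interact, and one must also be careful that the higher components $\Phi_{(n)}$, $n\ge 2$, which are not merely the multiplication, contribute exactly the interior $E_k$-factors of the hga product with the right signs. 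I would lean on the $\eqKS$ convention to keep the sign chase mechanical, and verify low-degree cases ($p+q\le 3$) explicitly against the formulas for $\Phi_{(1)},\Phi_{(2)},\Phi_{(3)}$ displayed above as a consistency check before asserting the general bijection.
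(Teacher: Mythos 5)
Your opening move --- reduce both dg coalgebra maps to their corestrictions onto \(\desusp\bar A\), i.e.\ to their associated twisting cochains, and then let the normalization of the \(E_k\) and of \(\Phi\) kill shuffle terms --- is exactly the paper's strategy. But step (3) as you describe it rests on a mis-identification of what those corestrictions are, and the ``bijection of planar interleavings'' you plan to construct does not exist at that level. The corestriction of the hga product is the twisting cochain \(\EE\): on \(\BB_kA\otimes\BB_lA\) it is the identity for \((k,l)=(1,0)\) or \((0,1)\), the \emph{single} term \(\pm E_l(a;b_1,\dots,b_l)\) for \(k=1\), \(l\ge1\), and zero otherwise. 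The sums of interleaved \(E\)-factors you describe are the components of the full coalgebra map, not of its corestriction. Likewise, corestricting \(\BB\Phi\circ\shuffle\) gives \(\Phi\circ\shuffle\), one application of \(\Phi_{(n)}\) to each shuffled word; there is no further sum over cuts. What actually happens on a component with \(k,l\ge1\) and \(n=k+l\) is a complete collapse rather than a bijection: in every summand of \(\Phi_{(n)}\) the conditions \(j_1+\dots+j_n=n-1\) and \eqref{eq:seq-def-shc-tw} force each of \(b'_1,\dots,b'_{n-1}\) to occur exactly once as an argument of some \(E_{j_s}\) with \(j_s\ge1\), so none of these may equal \(1\); hence \(a'_1=\dots=a'_{n-1}=1\), i.e.\ all \(k\) letters \(a_i\otimes1\) of the shuffle must sit in the single last slot. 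For \(k\ge2\) this is impossible and the component vanishes (a statement you need but do not isolate --- it is what makes the composite land on \(\EE\) and nothing more), while for \(k=1\) exactly one shuffle term survives, namely \(\pm[\,1\otimes b_1\mid\dots\mid1\otimes b_l\mid a_1\otimes1\,]\), on which \(\Phi_{(l+1)}\) evaluates to \(\pm E_l(a_1;b_1,\dots,b_l)\).

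With that correction your step (4) becomes a single sign computation rather than a family of them: the sign \((-1)^l\) from commuting the desuspensions in the one surviving shuffle, together with the Koszul sign for moving \(a_1\) past \(b_1,\dots,b_l\), must cancel against the \((-1)^{n-1}\) of \eqref{eq:def-Phi-n} and the signs absorbed by the ``\(\eqKS\)'' convention, and it does. You should also record the degenerate components (\(k=0\) or \(l=0\)): the composite is the identity for \((k,l)=(1,0)\) and \((0,1)\) and vanishes otherwise, again by normalization, since for \(n\ge2\) every summand of \(\Phi_{(n)}\) contains some \(E_m\) with \(m\ge1\). Your consistency check against the displayed \(\Phi_{(1)},\Phi_{(2)},\Phi_{(3)}\) is a good idea and would have revealed the collapse immediately.
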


\begin{proof}
  We verify that the twisting cochain associated to the composition given above
  equals the twisting cochain~\(\EE\) corresponding to the multiplication on~\(\BB A\) as
  defined in~\citehomog{eq.~\eqrefhomog{eq:def-EE}}.
  Let us consider the components
  \begin{equation}
    \label{eq:product-component}
    A^{\otimes k}\otimes A^{\otimes l} \xrightarrow{(\desusp)^{\otimes n}}
    \BB_{k}A\otimes \BB_{l}A \stackrel{\shuffle}{\longrightarrow}
    \BB_{k+l}(A\otimes A) \stackrel{\Phi}{\longrightarrow}
    A
  \end{equation}
  with~\(k\),~\(l\ge0\) and~\(n=k+l\).
  
  A look at the formula for~\(\Phi_{(1)}\) shows that \eqref{eq:product-component}
  is the identity map of~\(A\) if \((k,l)=(1,0)\) or~\((0,1)\).
  Moreover, the map is zero if \(k\ne1\) and~\(l=0\),
  or if \(k=0\) and~\(l\ne1\) since for~\(n\ge2\)
  at least one term~\(E_{m}\) with~\(m\ge1\) is contained in~\(\Phi_{(n)}\)
  and this term vanishes if any argument equals \(1\).
  These cases are therefore verified.

  Now assume \(k\),~\(l\ge1\), and let
  \begin{equation}
    \cc' = \pm\bigl[\,a'_{1}\otimes b'_{1}\bigm|\dots\bigm|a'_{n}\otimes b'_{n}\,\bigr]
  \end{equation}
  be a term appearing in~\(\shuffle(\cc)\) such that \(\Phi(\cc')\) is non-zero, where
  \begin{equation}
    \cc = (\desusp)^{\otimes n}(a_{1}\otimes\dots\otimes a_{k}\otimes b_{1}\otimes\dots\otimes b_{l}).
  \end{equation}
  Because \(b'_{1}\),~\dots,~\(b'_{n-1}\) become arguments to \(E\)~terms,
  they cannot equal \(1\). Hence \(a'_{1}=\dots=a'_{n-1}=1\) and \(i_{n-1}=1\) in~\eqref{eq:seq-def-shc-tw}.
  This implies
  \begin{equation}
    \cc' = \pm\bigl[\,1\otimes b_{1}\bigm|\dots\bigm|1\otimes b_{l}\bigm|a_{1}\otimes 1\,\bigr],
  \end{equation}
  hence
  \begin{equation}
    \Phi\,\shuffle(\cc) =
    \begin{cases}
      \pm E_{n}(a_{1};b_{1},\dots,b_{l}) & \text{if \(k=1\),} \\
      0 & \text{if \(k>1\).}
    \end{cases}
  \end{equation}

  It remains to verify that the sign is \(+1\) in the case~\(k=1\).
  Write \(B=A\otimes A\) and
  \begin{equation}
    \cc'' = (1\otimes b_{1})\otimes\dots\otimes(1\otimes b_{l})\otimes(a_{1}\otimes1) \in B^{\otimes(l+1)}.
  \end{equation}
  The summand of~\(\shuffle(\cc)\)
  that is not annihilated by~\(\Phi\) is
  \begin{align}
    \cc' &= T_{\desusp B,(\desusp B)^{\otimes l}}\bigl(\desusp\otimes(\desusp)^{\otimes l}\bigr)(\cc) \\
    \notag &= (-1)^{l} \bigl(\desusp\otimes(\desusp)^{\otimes l}\bigr)\,T_{B,B^{\otimes l}}(\cc)
    = (-1)^{\epsilon}\,(\desusp)^{\otimes(l+1)}(\cc'')
  \end{align}
  where
  \begin{align}
    \cc'' &= (1\otimes b_{1})\otimes\dots\otimes(1\otimes b_{l})\otimes(a_{1}\otimes1), \\
    \epsilon &= l+\deg{a_{1}}(\deg{b_{1}}+\dots+\deg{b_{l}}).
  \end{align}
  It is mapped to
  \begin{align}
    \Phi\,\shuffle(\cc)
    &= (-1)^{\epsilon}\,\Phi\,(\desusp)^{\otimes(l+1)}(\cc'') \\
    \notag &= (-1)^{\epsilon}\,\Phi_{(n)}(\cc'')
    = E_{l}(a_{1};b_{1},\dots,b_{l}),
  \end{align}
  as desired.
\end{proof}

\begin{corollary}
  \label{thm:BA-product-Phi-general}
  The following diagram commutes for any~\(n\ge0\):
  \begin{equation*}
    \begin{tikzcd}
      (\BB A)^{\otimes n} \arrow{rd}[left,pos=0.6,inner sep=2.5ex]{\iter{\mu}{n}} \arrow{rr}{\iter{\shuffle}{n}} & & \BB(A^{\otimes n}) \arrow{ld}[pos=0.25]{\BB\iter{\Phi}{n}} \\
      & \BB A
    \end{tikzcd}
  \end{equation*}
\end{corollary}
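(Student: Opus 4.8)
The plan is to deduce \Cref{thm:BA-product-Phi-general} from \Cref{thm:BA-product-Phi} by induction on~\(n\), treating the iterated multiplication~\(\iter{\mu}{n}\) and iterated shuffle~\(\iter{\shuffle}{n}\) as built up from the binary operations by some fixed bracketing, and using that the binary case is precisely \Cref{thm:BA-product-Phi}. First I would record the base cases: for~\(n=0\) both~\(\iter{\mu}{0}\) and~\(\iter{\shuffle}{0}\) are the identity of~\(\kk\) (viewed as the trivial tensor product~\(\BB\kk\)), and \(\BB\iter{\Phi}{0}\) is the identity of~\(\BB\kk\) as well; for~\(n=1\) all three maps are the identity of~\(\BB A\); and for~\(n=2\) the diagram is exactly the statement of \Cref{thm:BA-product-Phi}, once one notes that the shm map~\(\iter{\Phi}{2}=\Phi\colon A\otimes A\Rightarrow A\) is the one constructed in \Cref{sec:ai-map}.

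For the inductive step I would choose the standard left-normed bracketing, so that \(\iter{\mu}{n} = \mu\circ(\iter{\mu}{n-1}\otimes 1_{\BB A})\), \(\iter{\shuffle}{n} = \shuffle\circ(\iter{\shuffle}{n-1}\otimes 1_{\BB A})\), and correspondingly \(\iter{\Phi}{n} = \Phi\circ(\iter{\Phi}{n-1}\otimes 1_{A})\) as shm maps. The key step is then to paste two commuting squares: the functoriality/naturality square of~\(\shuffle\) with respect to the shm map~\(\iter{\Phi}{n-1}\otimes 1_A\) (i.e.\ that \(\shuffle\) is a natural transformation \(\BB(-)\otimes\BB(-)\Rightarrow\BB(-\otimes-)\), which holds since \(\shuffle\) is induced by the canonical dg-coalgebra map of bar constructions and hence commutes with \(\BB\) applied to any shm map in each slot), and the binary square from \Cref{thm:BA-product-Phi}. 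Concretely, starting from~\((\BB A)^{\otimes n}\), apply \(\iter{\shuffle}{n-1}\otimes 1\) to land in~\(\BB(A^{\otimes(n-1)})\otimes\BB A\); by the induction hypothesis this is intertwined with~\(\iter{\mu}{n-1}\otimes 1_{\BB A}\) via~\(\BB\iter{\Phi}{n-1}\otimes\BB 1_A\); then apply~\(\shuffle\) and~\(\BB\Phi\), invoking \Cref{thm:BA-product-Phi} with~\(A^{\otimes(n-1)}\) in the first slot (the hga and its powers being hgas by the termwise structure) to conclude that the composite equals~\(\mu\circ(\iter{\mu}{n-1}\otimes 1_{\BB A}) = \iter{\mu}{n}\). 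A small sign/associativity bookkeeping shows \(\BB\Phi\circ(\BB\iter{\Phi}{n-1}\otimes\BB 1_A)\) applied after~\(\iter{\shuffle}{n-1}\) matches~\(\BB\iter{\Phi}{n}\circ\iter{\shuffle}{n}\), using \(\BB(\Phi)\circ\BB(\iter{\Phi}{n-1}\otimes 1) = \BB(\Phi\circ(\iter{\Phi}{n-1}\otimes 1)) = \BB\iter{\Phi}{n}\) by functoriality of~\(\BB\) on shm maps and compositions.

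The main obstacle I expect is purely notational: making precise that \Cref{thm:BA-product-Phi} applies with an arbitrary hga (here~\(A^{\otimes(n-1)}\), equipped with the obvious product hga structure whose \(E_k\)-operations are the termwise ones) rather than only with the original~\(A\), and verifying that the \(\Phi\) produced for~\(A^{\otimes(n-1)}\) is compatible with~\(\iter{\Phi}{n-1}\) on~\(A\) under the structural maps. This is where the naturality clause~\ref{shc-shm} of \Cref{thm:hga-shc} — or rather the naturality of the construction in \Cref{sec:ai-map} — does the work: since all structure maps commute with hga morphisms, the inclusion/projection dgas relating \(A^{\otimes(n-1)}\) and iterated tensor powers of~\(A\) intertwine the respective~\(\Phi\)'s, so the two possible readings of the upper-right path of the triangle agree. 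Once this identification is in place, the diagram is a formal pasting of commuting cells, and the induction closes. I would also remark that the triangle is equivalent to the statement that the twisting cochain of~\(\BB\iter{\Phi}{n}\circ\iter{\shuffle}{n}\) is the \(n\)-fold iterate of~\(\EE\), which gives an alternative, twisting-cochain-level proof by iterating the computation in the proof of \Cref{thm:BA-product-Phi}; either route is routine given the binary case.
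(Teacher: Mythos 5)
Your inductive pasting is exactly the paper's proof: the cases \(n\le 2\) are \Cref{thm:BA-product-Phi}, and the inductive step splits the triangle into the induction-hypothesis triangle tensored with \(1_{\BB A}\), the parallelogram expressing naturality of \(\shuffle\) with respect to the shm map \(\iter{\Phi}{n}\otimes 1_{A}\) (for which the paper quotes \citehomog{Lemma~\refhomog{thm:shuffle-natural-shm}}), and the binary triangle \(\mu=\BB\Phi\,\shuffle\). Together with \(\BB\Phi\circ\BB(\iter{\Phi}{n}\otimes 1)=\BB\iter{\Phi}{n+1}\) and \(\shuffle\circ(\iter{\shuffle}{n}\otimes 1)=\iter{\shuffle}{n+1}\), this closes the induction, so the core of your argument is correct and identical in structure to the paper's.

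What does not go through is the paragraph about your ``main obstacle'', which is not an obstacle at all. In the pasting just described, \Cref{thm:BA-product-Phi} is invoked only for \(A\) itself: after the naturality square and the induction hypothesis you have already landed in \(\BB A\otimes\BB A\) via \(\iter{\mu}{n-1}\otimes 1_{\BB A}\), and the last cell is the binary triangle for \(A\). You never need a version of \Cref{thm:BA-product-Phi} with \(A^{\otimes(n-1)}\) in a slot, and the parenthetical claim that tensor powers of an hga are hgas ``by the termwise structure'' is false for degree reasons alone: \(E_{k}\) has degree \(-k\), so a termwise tensor of two copies of \(E_{k}\) would have degree \(-2k\) and cannot serve as the \(E_{k}\) of the tensor product. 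Likewise the inclusions and projections relating \(A\) and \(A^{\otimes(n-1)}\) are not hga morphisms, so the naturality statement of \Cref{thm:hga-shc} does not apply to them. Delete that paragraph; the rest of your argument neither uses nor needs it, and with it removed the proof matches the paper's.
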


Here we have written \(\iter{\shuffle}{n}\) and~\(\iter{\mu}{n}\) for the \(n\)-fold iterations of the shuffle map and the multiplication on~\(A\), which are both associative.
See~\citehomog{eq.~\eqrefhomog{eq:def-Phi-n}} for the definition of the iterations of the shm map~\(\Phi\).

\begin{proof}
  We proceed by induction.
  For \(n\le1\) there is nothing to show, and the case~\(n=2\) has been done above.
  For the induction step we observe that the parallelogram in the diagram
  \begin{equation*}
    \begin{tikzcd}[column sep=tiny]
      (\BB A)^{\otimes n} \otimes \BB A \arrow{rd}[left,pos=0.6,inner sep=2.5ex]{\iter{\mu}{n}\otimes1} \arrow{rr}[inner sep=0.75ex]{\iter{\shuffle}{n}\otimes1} & & \BB(A^{\otimes n})\otimes \BB A \arrow{ld}[pos=0.25]{\BB\iter{\Phi}{n}\otimes 1} \arrow{rr}[inner sep=0.75ex]{\shuffle} & & \BB(A^{\otimes n}\otimes A) \arrow{ld}[pos=0.25]{\BB(\iter{\Phi}{n}\otimes 1)} \\
      & \BB A \otimes \BB A \arrow{rd}[left,pos=0.74,inner sep=2.5ex]{\mu} \arrow{rr}[below,inner sep=0.5ex]{\shuffle} & & \BB(A\otimes A) \arrow{ld}[pos=0.4]{\BB\Phi} \\
      & & \BB A
    \end{tikzcd}
  \end{equation*}
  commutes by~\citehomog{Lemma~\refhomog{thm:shuffle-natural-shm}} and therefore the outer triangle by induction.
  This establishes the claim for~\(n+1\) and completes the proof.
\end{proof}

\section{Homotopy associativity}
\label{sec:def-ha}

The goal of this section is to establish a homotopy~\(\ha\)
between the twisting cochains~\(\Phi\circ(\Phi\otimes1)\) and~\(\Phi\circ(1\otimes\Phi)\).
To state our definition, we need to introduce some terminology.

A \newterm{\(c\)-product} is a product of one or more variables~\(c_{k}\);
it is called \newterm{proper} if it has more than one factor.
A \newterm{\(b\)-term} is a term of the form~\(E_{m}(b_{j};\dots)\)
with~\(m\ge0\) where all remaining arguments are \(c\)-products.
A \newterm{\(bc\)-product} is a product of one or more \(b\)-terms, say ending with~\(E_{m}(b_{j};\dots)\),
followed by the variable~\(c_{j}\).
An \newterm{\(a\)-term} is a term of the form~\(E_{m}(a_{i};\dots)\)
with~\(m\ge0\) where all remaining arguments are \(b\)-terms, \(c\)-products or \(bc\)-products.
An \newterm{\(ab\)-product} is a product of one or more \(a\)-terms and possibly \(b\)-terms that
ends with an \(a\)-term.
If we want to be more precise about the first variable of an \(E\)-term, we call it an \(a_{i}\)-term or a \(b_{j}\)-term.

To motivate our formula, we observe the following:
By~\citehomog{eq.~\eqrefhomog{eq:f-mult-otimes-g}} we have
\begin{equation}
  (1\otimes\Phi)_{(n)}(a_{\bullet}\otimes b_{\bullet}\otimes c_{\bullet})=a_{1}\cdots a_{n}\otimes\Phi_{(n)}(b_{\bullet}\otimes c_{\bullet}).
\end{equation}
Note that \(\Phi_{(n)}(b_{\bullet}\otimes c_{\bullet})\) is a \(bc\)-product.
To compute \((\Phi\circ(1\otimes\Phi))_{(n)}(a_{\bullet}\otimes b_{\bullet}\otimes c_{\bullet})\) we use
\citehomog{eq.~\eqrefhomog{eq:twc-composition}}. Taking property~\eqref{eq:Eprodfirstarg} of the definition of an hga
into account, we see that we obtain
a sum of terms~\(\pm\,U\,V\) where \(V\) is a \(bc\)-product and \(U\) a product of \(a\)-terms
having only \(bc\)-products as arguments. A similar argument, combined with the associativity condition~\eqref{eq:formula-Ek-El},
shows that each term appearing in~\((\Phi\circ(\Phi\otimes1))_{(n)}\linebreak[1](a_{\bullet}\otimes b_{\bullet}\otimes c_{\bullet})\)
is of the form~\(\pm\,U\,b_{n}\,W\) where \(U\) is an \(ab\)-product and \(W\) a \(c\)-product. Moreover
no \(bc\)-products appear inside \(a\)-terms in this case, and the final variable of each \(c\)-product, say \(c_{j}\),
corresponds to a \(b_{j}\)-term that appears as a factor of the top-level product and not as an argument to some \(a\)-term.
The homotopy~\(\ha\) has to interpolate between the two kinds of terms we have described.

We set \(\ha_{(0)}=\eta_{A}\). For~\(n\ge1\), we define
\begin{equation}
  \label{eq:def-ha-UV}
  \ha_{(n)} \eqKS \sum (-1)^{\epsilon}\,U\, V
\end{equation}
where the sum is over all \(ab\)-products~\(U\) and all \(bc\)-products~\(V\) satisfying the following conditions:
\begin{enumroman}
\item 
  \label{ass-cond-1}
  Each of the \(3n\)~variables~\(a_{1}\),~\dots,~\(c_{n}\) appears exactly once in~\(U\,V\).
  The \(a_{i}\)'s appear in ascending order, as do the \(b_{i}\)'s and the \(c_{i}\)'s.
  Moreover, \(a_{i}\) precedes \(b_{i}\) and \(b_{i}\)~precedes \(c_{i}\) for each~\(i\).
\item \label{ass-cond-2}
  The first argument to any \(E\)-term in~\(U\,V\) has a larger index than the remaining arguments.
  (The definitions above imply that the first argument also has a smaller letter than the remaining arguments, where \(a<b<c\).)
\item \label{ass-cond-3}
  Any top-level \(b\)-term appearing in~\(U\), say with first argument~\(b_{i}\),
  comes right after the \(a\)-term with first argument~\(a_{i}\).
\item Define
  \begin{align}
    J_{a} &= \bigl\{\, j \bigm| \text{\(b_{j}\) appears in a \(b\)-term that is argument to an \(a\)-term}\,\bigr\}, \\
    J_{b} &= \bigl\{\, j \bigm| \text{\(b_{j}\) appears in a top-level \(b\)-term inside the \(ab\)-product~\(U\)}\,\bigr\}, \\
    J_{c} &= \bigl\{\, j \bigm| \text{\(b_{j}\) appears in a \(bc\)-product}\,\bigr\}.
  \end{align}
  By construction,  \(\{1,\dots,n\}\) is the disjoint union of~\(J_{a}\),~\(J_{b}\) and~\(J_{c}\).
  Note that \(J_{c}\) cannot be empty as \(V\) is a \(bc\)-product.
  We additionally require
  \begin{gather}
    J_{a}\ne\emptyset,
    \qquad\qquad
    J_{a} \cup J_{b} = \{\,1,\dots,\jamax\,\}, \\
    J_{a}\setminus\{\jamax\} = \bigl\{\, j \bigm| \text{\(c_{j}\) appears in a \(c\)-product, but not as last factor}\,\bigr\}
  \end{gather}
  where we have written \(\jamax=\max J_{a}\).
  If \(c_{j}\) does not appear in a proper product, then it is considered to be the last factor of a \(c\)-product with a single factor.
  Consequently, we have \(n\ge2\) and \(J_{c} = \{\jamax+1,\dots,n\}\).
\end{enumroman}

The sign exponent~\(\epsilon\) in~\eqref{eq:def-ha-UV} is defined recursively.
Write \(\jamin=\min J_{a}\). If \(\jamin=\jamax\), that is, if \(J_{a}=\{\jamax\}\), then
\begin{align}
  \epsilon &= n + \text{contribution of the \(b\)-term~\(E(b_{\jamax};\dots)\)} \\
  \notag &\qquad + \text{contribution of each \(bc\)-product occurring inside an \(a\)-term}.
\end{align}
The contributions are as follows: Consider a \(bc\)-product
\begin{equation}
  E_{q_{j}}(b_{j};\dots)\cdots E_{q_{k}}(b_{k};\dots)\,c_{k}
\end{equation}
occurring in some term~\(E_{p}(a_{i};\dots)\) as \(m\)-th argument (with~\(a_{i}\) being at position~\(0\) and the last argument at position~\(p\)).
The contribution of such a term is
\begin{equation}
  (q_{j}+\dots+q_{k})(p-m+1).
\end{equation}
Note that \(q_{j}+\dots+q_{k}\) is the degree of the \(bc\)-product, considered as a function of its arguments,
and \(p-m+1\) is the number of arguments of~\(E_{p}(a_{i};\dots)\) from the \(bc\)-product (including) to the end.
If \(E_{q}(b_{\jamax};\dots)\) occurs in the term~\(E_{p}(a_{i};\dots)\) as \(m\)-th argument, then its contribution is
\begin{equation}
  \hat\epsilon + m + q\,(p-m)
\end{equation}
where \(\hat\epsilon\) is the degree of the terms preceding the \(E(a_{i};\dots)\)-term, again considered
as a function of their arguments. For example, if \(U\,V\) is
\begin{equation}
  a_{1}\,b_{1}\,a_{2}\,E_{1}(b_{2};c_{1})\,a_{3}\,a_{4}\,a_{5}\,E_{2}\bigl(a_{6};E_{1}(b_{3};c_{2}),b_{4}\,E_{2}(b_{5};c_{3},c_{4})\,c_{5}\bigr)\,b_{6}\,c_{6},
\end{equation}
then the contribution of the \(bc\)-product inside the \(a_{6}\)-term is \(2\cdot 1=2\),
and that of the \(b_{3}\)-term is \(1+1+1\cdot 1=3\).

If \(J_{a}\) is not a singleton, then we compare \eqref{eq:def-ha-UV} to a summand~\((-1)^{\epsilon'}\,U'\,V'\)
with~\(J_{a}'=J_{a}\setminus\{\jamin\}\) and \(J_{b}'=J_{b}\cup\{\jamin\}\). More precisely: We can write \(U\,V\) as
\begin{multline}
  \label{eq:ha-sign-def-rec-term}
  \allowdisplaybreaks[1]
  E_{p_{1}}(a_{1};c_{\bullet})\,E_{q_{1}}(b_{1};c_{\bullet})\cdots E_{q_{\jamin-1}}(b_{\jamin-1},c_{\bullet})\,E_{p_{\jamin}}(a_{\jamin};c_{\bullet})\\
  {}\cdot E_{p_{\jamin+1}}(a_{\jamin+1};c_{\bullet})\cdots E_{p_{i}}(a_{i};
  c_{\bullet},E_{q}(b_{\jamin};c_{\bullet}),\dots)\cdots c_{n}
\end{multline}
where the \(b_{\jamin}\)-term is the \(m\)-th argument of the \(a_{i}\)-term.
We define \(U'\,V'\) as
\begin{multline}
  E_{p_{1}}(a_{1};c_{\bullet})\,E_{q_{1}}(b_{1};c_{\bullet})\cdots E_{q_{\jamin-1}}(b_{\jamin-1},c_{\bullet})\,E_{p_{\jamin}+\dots+p_{i-1}
  +m-1}(a_{\jamin};c_{\bullet})\\
  {}\cdot E_{q}(b_{\jamin};c_{\bullet})\,a_{\jamin+1}\cdots a_{i-1}\,E_{p_{i}
  -m}(a_{i};\dots)\cdots c_{n}
\end{multline}
where all \(c\)-variables~\(c_{\bullet}\) appearing between~\(a_{\jamin+1}\) and~\(b_{\jamin}\) have been moved as additional arguments
to the term~\(E(a_{\jamin};\dots)\). Moreover, the proper \(c\)-product starting with~\(c_{\jamin}\) is split
into~\(c_{\jamin}\) and the remaining product. These two arguments replace the original \(c\)-product, wherever
it appears in~\(U\,V\). For example, if \(U\,V\) is
\begin{equation}
  a_{1}\,b_{1}\,a_{2}\,b_{2}\,a_{3}\,E_{1}(a_{4};c_{1})\,E_{2}(a_{5};E_{1}(b_{3},c_{2}),b_{4})\,E_{1}(a_{6};b_{5})\,E_{1}(b_{6};c_{3}\,c_{4}\,c_{5})\,c_{6},
\end{equation}
then \(U'\,V'\) equals
\begin{equation}
  \label{eq:ex-U'V'}
  a_{1}\,b_{1}\,a_{2}\,b_{2}\,E_{1}(a_{3};c_{1})\,E_{1}(b_{3},c_{2})\,a_{4}\,E_{1}(a_{5};b_{4})\,E_{1}(a_{6};b_{5})\,E_{2}(b_{6};c_{3},c_{4}\,c_{5})\,c_{6}.
\end{equation}

The sign exponents~\(\epsilon\) and~\(\epsilon'\) are related by
\begin{equation}
  \label{eq:ha-sign-def-rec}
  \epsilon' - \epsilon =  \tilde\epsilon + m + q\,(p_{i} -m) + \hat\epsilon,
\end{equation}
where
\begin{equation}
  \tilde\epsilon = \sum_{s=1}^{\jamin-1}(p_{s}+q_{s}) + \sum_{s=\jamin}^{i-1}p_{s}
\end{equation}
is the degree of the expression preceding \(E(a_{i};\dots)\)
as a function of its arguments, and
\(\hat\epsilon\) is the sign exponent for the summand of~\(d(U'V')\)
that recombines \(c_{\jamin}\) and the following \(c\)-product to the original one.
In the example~\eqref{eq:ex-U'V'} we have \(\hat\epsilon=5\).
Since \(\tilde\epsilon\) is a summand of~\(\hat\epsilon\), the difference~\(\epsilon-\epsilon'\)
is actually independent of~\(\tilde\epsilon\).

\def\nm#1{\hbox{\normalsize$#1$}}
\def\sm#1{\clap{\small$#1$}}
\def\jcols#1#2#3{&\qquad\qquad\sm{#1}\quad&\qquad\sm{#2}\qquad&\quad\sm{#3}}
Omitting the arguments,~\(a_{\bullet}\otimes b_{\bullet}\otimes c_{\bullet}\),
the components of~\(\ha\) look as follows in small degrees.
We also indicate the values of~\(J_{a}\),~\(J_{b}\) and~\(J_{c}\) for each term.
Note that the vanishing of~\(\ha_{(1)}\) reflects the identity \((\Phi\circ(\Phi\otimes1))_{(1)}=(\Phi\circ(1\otimes\Phi))_{(1)}=\iter{\mu_{A}}{3}\).
\begin{alignat}{3}
  \ha_{(1)} &= 0, \jcols{\text{\normalsize\(J_{a}\)}}{\text{\normalsize\(J_{b}\)}}{\text{\normalsize\(J_{c}\)}} \\
  \ha_{(2)} &\eqKS
{} - a_{{1}}\, E_{{2}} ( a_{{2}};b_{{1}},c_{{1}} ) \, b_{{2}}\, c_{{2}} \jcols{\{1\}}{\emptyset}{\{2\}}
\\ \notag &\noeq {} - a_{{1}}\, E_{{1}} ( a_{{2}};b_{{1}} ) \, E_{{1}} ( b_{{2}};c_{{1}} ) \, c_{{2}}, \jcols{\{1\}}{\emptyset}{\{2\}} \\
  \ha_{(3)} &\eqKS
{} + a_{{1}}\, E_{{2}} ( a_{{2}};b_{{1}},c_{{1}} ) \, E_{{1}} ( a_{{3}};b_{{2}}\, c_{{2}} ) \, b_{{3}}\, c_{{3}} \jcols{\{1\}}{\emptyset}{\{2,3\}}
\\ \notag &\noeq {} - a_{{1}}\, E_{{1}} ( a_{{2}};b_{{1}} ) \, E_{{1}} ( a_{{3}};E_{{1}} ( b_{{2}};c_{{1}} ) \, c_{{2}} ) \, b_{{3}}\, c_{{3}} \jcols{\{1\}}{\emptyset}{\{2,3\}}
\\ \notag &\noeq {} + a_{{1}}\, E_{{1}} ( a_{{2}};b_{{1}} ) \, E_{{2}} ( a_{{3}};c_{{1}},b_{{2}}\, c_{{2}} ) \, b_{{3}}\, c_{{3}} \jcols{\{1\}}{\emptyset}{\{2,3\}}
\\ \notag &\noeq {} - a_{{1}}\, a_{{2}}\, E_{{2}} ( a_{{3}};b_{{1}},E_{{1}} ( b_{{2}};c_{{1}} ) \, c_{{2}} ) \, b_{{3}}\, c_{{3}} \jcols{\{1\}}{\emptyset}{\{2,3\}}
\\ \notag &\noeq {} + a_{{1}}\, a_{{2}}\, E_{{3}} ( a_{{3}};b_{{1}},c_{{1}},b_{{2}}\, c_{{2}} ) \, b_{{3}}\, c_{{3}} \jcols{\{1\}}{\emptyset}{\{2,3\}}
\\ \notag &\noeq {} + a_{{1}}\, E_{{2}} ( a_{{2}};b_{{1}},c_{{1}} ) \, a_{{3}}\, b_{{2}}\, E_{{1}} ( b_{{3}};c_{{2}} ) \, c_{{3}} \jcols{\{1\}}{\emptyset}{\{2,3\}}
\\ \notag &\noeq {} + a_{{1}}\, E_{{1}} ( a_{{2}};b_{{1}} ) \, E_{{1}} ( a_{{3}};c_{{1}} ) \, b_{{2}}\, E_{{1}} ( b_{{3}};c_{{2}} ) \, c_{{3}} \jcols{\{1\}}{\emptyset}{\{2,3\}}
\\ \notag &\noeq {} + a_{{1}}\, E_{{1}} ( a_{{2}};b_{{1}} ) \, a_{{3}}\, E_{{1}} ( b_{{2}};c_{{1}} ) \, E_{{1}} ( b_{{3}};c_{{2}} ) \, c_{{3}} \jcols{\{1\}}{\emptyset}{\{2,3\}}
\\ \notag &\noeq {} + a_{{1}}\, E_{{1}} ( a_{{2}};b_{{1}} ) \, a_{{3}}\, b_{{2}}\, E_{{2}} ( b_{{3}};c_{{1}},c_{{2}} ) \, c_{{3}} \jcols{\{1\}}{\emptyset}{\{2,3\}}
\\ \notag &\noeq {} + a_{{1}}\, a_{{2}}\, E_{{2}} ( a_{{3}};b_{{1}},c_{{1}} ) \, b_{{2}}\, E_{{1}} ( b_{{3}};c_{{2}} ) \, c_{{3}} \jcols{\{1\}}{\emptyset}{\{2,3\}}
\\ \notag &\noeq {} + a_{{1}}\, a_{{2}}\, E_{{1}} ( a_{{3}};b_{{1}} ) \, E_{{1}} ( b_{{2}};c_{{1}} ) \, E_{{1}} ( b_{{3}};c_{{2}} ) \, c_{{3}} \jcols{\{1\}}{\emptyset}{\{2,3\}}
\\ \notag &\noeq {} + a_{{1}}\, a_{{2}}\, E_{{1}} ( a_{{3}};b_{{1}} ) \, b_{{2}}\, E_{{2}} ( b_{{3}};c_{{1}},c_{{2}} ) \, c_{{3}} \jcols{\{1\}}{\emptyset}{\{2,3\}}
\\ \notag &\noeq {} - a_{{1}}\, b_{{1}}\, E_{{1}} ( a_{{2}};c_{{1}} ) \, E_{{2}} ( a_{{3}};b_{{2}},c_{{2}} ) \, b_{{3}}\, c_{{3}} \jcols{\{2\}}{\{1\}}{\{3\}}
\\ \notag &\noeq {} - a_{{1}}\, b_{{1}}\, E_{{1}} ( a_{{2}};c_{{1}} ) \, E_{{1}} ( a_{{3}};b_{{2}} ) \, E_{{1}} ( b_{{3}};c_{{2}} ) \, c_{{3}} \jcols{\{2\}}{\{1\}}{\{3\}}
\\ \notag &\noeq {} + a_{{1}}\, b_{{1}}\, a_{{2}}\, E_{{3}} ( a_{{3}};b_{{2}},c_{{1}},c_{{2}} ) \, b_{{3}}\, c_{{3}} \jcols{\{2\}}{\{1\}}{\{3\}}
\\ \notag &\noeq {} - a_{{1}}\, b_{{1}}\, a_{{2}}\, E_{{3}} ( a_{{3}};c_{{1}},b_{{2}},c_{{2}} ) \, b_{{3}}\, c_{{3}} \jcols{\{2\}}{\{1\}}{\{3\}}
\\ \notag &\noeq {} - a_{{1}}\, b_{{1}}\, a_{{2}}\, E_{{2}} ( a_{{3}};E_{{1}} ( b_{{2}};c_{{1}} ) ,c_{{2}} ) \, b_{{3}}\, c_{{3}} \jcols{\{2\}}{\{1\}}{\{3\}}
\\ \notag &\noeq {} + a_{{1}}\, b_{{1}}\, a_{{2}}\, E_{{2}} ( a_{{3}};b_{{2}},c_{{1}} ) \, E_{{1}} ( b_{{3}};c_{{2}} ) \, c_{{3}} \jcols{\{2\}}{\{1\}}{\{3\}}
\\ \notag &\noeq {} - a_{{1}}\, b_{{1}}\, a_{{2}}\, E_{{2}} ( a_{{3}};c_{{1}},b_{{2}} ) \, E_{{1}} ( b_{{3}};c_{{2}} ) \, c_{{3}} \jcols{\{2\}}{\{1\}}{\{3\}}
\\ \notag &\noeq {} + a_{{1}}\, b_{{1}}\, a_{{2}}\, E_{{1}} ( a_{{3}};E_{{1}} ( b_{{2}};c_{{1}} )  ) \, E_{{1}} ( b_{{3}};c_{{2}} ) \, c_{{3}} \jcols{\{2\}}{\{1\}}{\{3\}}
\\ \notag &\noeq {} + a_{{1}}\, b_{{1}}\, a_{{2}}\, E_{{1}} ( a_{{3}};b_{{2}} ) \, E_{{2}} ( b_{{3}};c_{{1}},c_{{2}} ) \, c_{{3}} \jcols{\{2\}}{\{1\}}{\{3\}}
\\ \notag &\noeq {} - a_{{1}}\, E_{{1}} ( a_{{2}};b_{{1}} ) \, E_{{2}} ( a_{{3}};b_{{2}},c_{{1}}\, c_{{2}} ) \, b_{{3}}\, c_{{3}} \jcols{\{1,2\}}{\emptyset}{\{3\}}
\\ \notag &\noeq {} - a_{{1}}\, E_{{1}} ( a_{{2}};b_{{1}} ) \, E_{{1}} ( a_{{3}};b_{{2}} ) \, E_{{1}} ( b_{{3}};c_{{1}}\, c_{{2}} ) \, c_{{3}} \jcols{\{1,2\}}{\emptyset}{\{3\}}
\\ \notag &\noeq {} - a_{{1}}\, a_{{2}}\, E_{{3}} ( a_{{3}};b_{{1}},b_{{2}},c_{{1}}\, c_{{2}} ) \, b_{{3}}\, c_{{3}} \jcols{\{1,2\}}{\emptyset}{\{3\}}
\\ \notag &\noeq {} - a_{{1}}\, a_{{2}}\, E_{{2}} ( a_{{3}};b_{{1}},b_{{2}} ) \, E_{{1}} ( b_{{3}};c_{{1}}\, c_{{2}} ) \, c_{{3}}. \jcols{\{1,2\}}{\emptyset}{\{3\}}
\end{alignat}

\begin{remark}
  The number of terms seems to grow rapidly with~\(n\), by a factor close to~\(10\) with each degree.
  There are no terms in~\(\ha_{(1)}\), two terms in~\(\ha_{(2)}\), \(25\)~terms in~\(\ha_{(3)}\),
  \(254\)~terms in~\(\ha_{(4)}\), \(2421\)~terms in~\(\ha_{(5)}\), \(22{,}522\)~terms in~\(\ha_{(6)}\),
  \(207{,}682\)~terms in~\(\ha_{(7)}\), \(1{,}911{,}954\)~terms in~\(\ha_{(8)}\) and \(17{,}635{,}830\)~terms in~\(\ha_{(9)}\).
\end{remark}

\begin{proposition}
  \label{thm:hga-shc-homass}
  The maps~\(\ha_{(n)}\) assemble to an shm homotopy~\(\ha\) from~\(\Phi\circ(\Phi\otimes1)\) to~\(\Phi\circ(1\otimes\Phi)\).
\end{proposition}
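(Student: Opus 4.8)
The plan is to verify directly that the collection $\{\ha_{(n)}\}$ satisfies the defining identity of an shm homotopy between the two twisting cochains $\Phi\circ(\Phi\otimes1)$ and $\Phi\circ(1\otimes\Phi)$. Recall from \citehomog{Sec.~\refhomog{sec:shm-tensor}} that this amounts to checking, for each $n$, that
\begin{equation*}
  d(\ha_{(n)}) + (\text{sum of composition-type terms built from the }\ha_{(m)}\text{ with }m<n)
  = \bigl(\Phi\circ(1\otimes\Phi)\bigr)_{(n)} - \bigl(\Phi\circ(\Phi\otimes1)\bigr)_{(n)},
\end{equation*}
where $d$ is the differential induced by the internal differential of $A$ together with the hga structure maps appearing inside the $E$-terms. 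First I would unwind both sides into the normal-form basis of \Cref{rem:hgaop-basis}: expand $\bigl(\Phi\circ(\Phi\otimes1)\bigr)_{(n)}$ using \citehomog{eq.~\eqrefhomog{eq:twc-composition}} together with the associativity identity~\eqref{eq:formula-Ek-El}, and expand $\bigl(\Phi\circ(1\otimes\Phi)\bigr)_{(n)}$ using~\eqref{eq:Eprodfirstarg}; as sketched in the paragraphs preceding~\eqref{eq:def-ha-UV}, the first produces exactly the $ab$-product-times-$c$-product terms (the $J_a=\{\jamax\}$ degenerate-on-one-side stratum, read appropriately), and the second the $U\,V$ terms with $J_a\cup J_b=\{1,\dots,\jamax\}$. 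This identifies the right-hand side in the same combinatorial language in which $\ha_{(n)}$ is written.

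The core of the argument is then a term-by-term cancellation. I would organize $d(\ha_{(n)})$ according to which structural feature of a summand $(-1)^{\epsilon}U\,V$ the differential acts on: (a) applying the internal differential $d_A$ to a single variable; (b) applying the hga differential~\eqref{eq:def-Ek-d} to an $E$-term, which either splits off a boundary factor or merges two adjacent arguments; (c) using~\eqref{eq:formula-Ek-El} to nest one $E$-term inside another, or its inverse to un-nest; (d) using~\eqref{eq:Eprodfirstarg} to split or merge $E$-terms sharing a first argument that is a product. Most of these produce pairs of terms that cancel internally within $d(\ha_{(n)})$ — and this is where the recursive sign definition~\eqref{eq:ha-sign-def-rec} does its work: the passage from $(U,V)$ with $J_a\ni\jamin$ to $(U',V')$ with $J_a'=J_a\setminus\{\jamin\}$ is precisely the move that splits $c_{\jamin}$ off the front of its $c$-product and promotes the $b_{\jamin}$-term to top level, and $\epsilon'-\epsilon$ was rigged so that the two corresponding boundary terms cancel. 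The terms that \emph{survive} in $d(\ha_{(n)})$ must match, with correct sign, either the right-hand side or the lower-degree composition terms; for the latter one uses that the composition-of-twisting-cochain formula inserts a $\Phi_{(j)}$ (itself a sum over the constrained decompositions~\eqref{eq:seq-def-shc-tw}) into a slot of an $\ha_{(n-j+1)}$, and this is exactly the operation of ``collapsing a block of consecutive indices'' in the $J_a/J_b/J_c$ bookkeeping.

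I expect the main obstacle to be the sign verification, not the combinatorics of which terms pair with which: once one fixes the dictionary between summands of $\ha_{(n)}$ and the strata described by $(J_a,J_b,J_c)$ together with the nesting pattern, the identification of partners is forced, but checking that every cancellation and every match carries the correct sign under the Koszul rule (encoded via ``$\eqKS$'') requires tracking the recursively defined $\epsilon$ through each of the moves (a)--(d). Because $\epsilon$ is defined by induction on $|J_a|$ with base case $J_a=\{\jamax\}$ given by an explicit sum of ``contributions,'' I would first prove a closed-form (non-recursive) expression for $\epsilon$ by induction using~\eqref{eq:ha-sign-def-rec}, and then do all sign comparisons against that closed form. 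This reduces the whole statement to a finite, if lengthy, bookkeeping of integer exponents modulo $2$, which is the computation relegated to the appendix (and which was cross-checked by computer, as noted in the acknowledgements); the conceptual content is entirely in setting up the dictionary above and in the design of~\eqref{eq:ha-sign-def-rec}.
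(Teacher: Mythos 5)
Your proposal matches the paper's proof in essence: the paper establishes the twisting-homotopy identity by exactly this kind of term-by-term pairing (organized in \Cref{sec:hga-shc-homass} according to which structural feature of a summand~\(U\,V\) the differential acts on) followed by a sign verification against the recursive definition~\eqref{eq:ha-sign-def-rec}, with \Cref{thm:sign-change-Y} transporting that sign rule to~\(\Phi\circ(\Phi\otimes1)\), and it separately checks the normalization condition for twisting homotopy families. The only divergence is tactical: you would first derive a closed-form expression for the sign exponent~\(\epsilon\), whereas the paper keeps the recursion and reduces each sign comparison case-by-case to the base case~\(J_{a}=\{\jamax\}\); both amount to the same bookkeeping.
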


\begin{proof}
  This is a very long direct computation, see \Cref{sec:hga-shc-homass}.
  We remark that this proof is the only place in this paper where we use
  the associativity condition~\eqref{eq:formula-Ek-El} for the hga structure,
  besides assuming that the product on~\(\BB A\) is associative.

  Let us verify here the normalization condition~\citehomog{eq.~\eqrefhomog{eq:tw-h-family-1}}
  for twisting homotopy families.
  Consider a term~\(U\,V\) in~\eqref{eq:def-ha-UV} 
  and assume that \(a_{i}=b_{i}=c_{i}=1\) for some~\(i\).
  
  If \(i\in J_{a}\), then the \(a\)-term containing \(b_{i}\) vanishes and therefore also \(U\,V\).

  In the case~\(i\in J_{b}\) the \(b_{i}\)-term is top-level and there is a \(c\)-product ending in~\(c_{i}\).
  If \(i>1\) and \(b_{i-1}\) is not top-level, then it must be an argument to the \(a_{i}\)-term, which therefore vanishes.
  Otherwise, the \(c\)-product containing \(c_{i}\) is just \(c_{i}=1\) itself. Since it is argument to some \(a\)-term or \(b\)-term,
  the whole expression is again \(0\).

  We finally consider the case \(i\in J_{c}\). Again, the \(c\)-product containing \(c_{i}\) is \(c_{i}=1\) itself.
  If the \(bc\)-term containing \(b_{i}\) does not end in~\(c_{i}\), then
  \(c_{i}\) is argument to some later \(b\)-term in the same \(bc\)-product, so that \(U\,V\) vanishes.
  If the \(bc\)-product ends in~\(c_{i}\), we look at \(b_{i-1}\).
  If it appears in the same \(bc\)-product, then \(c_{i-1}\) is an argument of the \(b_{i}\)-term.
  If \(i=\jamax\) or if \(b_{i-1}\) is part of an earlier \(bc\)-product, then it appears inside the \(a_{i}\)-term,
  which once again forces \(U\,V\) to vanish.  
\end{proof}

\begin{proposition}
  \label{thm:Bha-shuffle-vanish}
  The associated coalgebra homotopy~\(\BB\ha\) vanishes on the image
  of the iterated shuffle map~\(\iter{\shuffle}{3}\colon \BB A\otimes\BB A\otimes\BB A\to\BB(A\otimes A\otimes A)\).
\end{proposition}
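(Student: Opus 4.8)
The plan is to argue at the level of the twisting homotopy family rather than the coalgebra homotopy $\BB\ha$ directly, in complete parallel with the proof of \Cref{thm:BA-product-Phi} and its generalization \Cref{thm:BA-product-Phi-general}. Recall that the component of $\BB\ha\circ\iter{\shuffle}{3}$ feeding on $A^{\otimes k}\otimes A^{\otimes l}\otimes A^{\otimes m}$ is (up to the desuspensions) a sum over the summands $\cc'$ of $\iter{\shuffle}{3}(\cc)$ of the values $\ha_{(n)}(\cc')$ with $n=k+l+m$. So it suffices to show that for every term $\cc' = \pm[\,x_1\mid\dots\mid x_n\,]$ appearing in an iterated shuffle of a decomposable tensor, with each $x_t$ of the form $a'_t\otimes b'_t\otimes c'_t$, one has $\ha_{(n)}(\cc')=0$.

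The key observation is the interaction between the shuffle structure and the normalization built into $\ha$. In a term $\cc'$ coming from $\iter{\shuffle}{3}$ applied to $(\desusp)^{\otimes k}(a_\bullet)\otimes(\desusp)^{\otimes l}(b_\bullet)\otimes(\desusp)^{\otimes m}(c_\bullet)$, each of the $n$ slots carries exactly one of the original generators in exactly one of its three tensor factors, with the other two factors equal to $1$; moreover the $a$-generators occupy $k$ of the slots, the $b$-generators $l$ of them, and the $c$-generators $m$ of them, and within each group the generators appear in increasing slot order. First I would feed such a $\cc'$ into the defining formula \eqref{eq:def-ha-UV}: since in every slot two of the three coordinates are $1$, and since every operation $E_k$ with $k\ge1$ (as well as every top-level product factor that is not itself a generator) vanishes when fed a $1$, the only terms $U\,V$ that can survive are extremely constrained. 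Concretely, a surviving $U\,V$ must place each $a$-generator, each $b$-generator and each $c$-generator into a position where the normalization does not kill it; I would run through conditions \ref{ass-cond-1}--\ref{ass-cond-3} together with the constraints on $J_a$, $J_b$, $J_c$ to see that the requirement "$a_i$ precedes $b_i$ precedes $c_i$ for each $i$'' is incompatible with a non-vanishing assignment once the slots are locked by the shuffle pattern. The cleanest version of the argument is probably the one already used in the proof of \Cref{thm:BA-product-Phi}: since $b'_1,\dots,b'_{n-1}$ (in the notation there, the "middle" coordinates) become arguments to $E$-terms and hence cannot be $1$, the shuffle term is forced into essentially a unique shape, and then one checks that this shape sits in $\ha_{(n)}$ with coefficient $0$ because no valid $(U,V)$ with the prescribed $J_a\ne\emptyset$, $J_a\cup J_b=\{1,\dots,\jamax\}$ matches it.

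I expect the main obstacle to be purely bookkeeping: tracking, for a shuffle term, exactly which generator lands in which of the three coordinate slots and then matching that pattern against the combinatorial type $(J_a,J_b,J_c)$ of a summand of $\ha_{(n)}$, while keeping the normalization conditions straight. There is a subtlety in that $\ha$, unlike $\Phi$, involves three families of variables and both top-level $b$-terms and $bc$-products, so one must check the vanishing separately according to whether a would-be surviving generator is forced to sit inside an $a$-term, inside a top-level $b$-term, or inside a $bc$-product — mirroring the three cases ($i\in J_a$, $i\in J_b$, $i\in J_c$) in the normalization proof of \Cref{thm:hga-shc-homass}. Once that case analysis is set up, each case reduces to the statement that a $1$ appears as an argument of some $E_k$ with $k\ge1$, or as a non-leading factor of a product, or as a first argument of an $E$-term (which by condition \ref{ass-cond-2} also cannot happen unless the term is trivial), so that the term dies; alternatively, the structural constraint "$a_i$ before $b_i$ before $c_i$'' is violated by the shuffle ordering. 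Collecting these, every term $\ha_{(n)}(\cc')$ vanishes, hence $\BB\ha\circ\iter{\shuffle}{3}=0$.
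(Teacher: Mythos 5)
Your reduction to the level of the twisting homotopy family, and your starting observation that in each slot of a shuffle term two of the three coordinates equal \(1\) so that survival of a summand \(U\,V\) of \(\ha_{(n)}\) is governed entirely by the normalization of the \(E\)-operations, agree with the paper's setup. But at the decisive point the proposal only promises a case analysis (``one checks that this shape sits in \(\ha_{(n)}\) with coefficient \(0\)'') without supplying the reason, and the two concrete mechanisms you do offer would not carry it. The ordering requirement of condition~\ref{ass-cond-1} is a constraint on which pairs \((U,V)\) are summed over, not on the input, so it cannot be ``violated by the shuffle ordering''; and the argument of \Cref{thm:BA-product-Phi} does not transfer as stated, because in \(\ha_{(n)}\) the variables \(b_{j}\) with \(j\in J_{b}\) or \(j\in J_{c}\) may sit in top-level \(b\)-terms or lead the \(bc\)-products, where \(b'_{j}=1\) kills nothing.

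The missing idea is the following chain, which is the whole content of the paper's proof. Since \(J_{a}\ne\emptyset\), for every \(j\in J_{a}\) the variable \(b_{j}\) is the first argument of a \(b\)-term that is itself an argument of an \(a\)-term; if \(b'_{j}=1\), that \(b\)-term has value \(0\) or \(1\) and in either case the enclosing \(E\)-operation vanishes. Hence a surviving term forces \(b'_{j}\ne1\), and therefore \(c'_{j}=1\), for all \(j\in J_{a}\). Now the defining condition that \(J_{a}\setminus\{\jamax\}\) consists exactly of the indices \(j\) for which \(c_{j}\) is a non-final factor of a \(c\)-product guarantees that every factor of the \(c\)-product \(\tilde c\) ending in \(c_{\jamax}\) is indexed by \(J_{a}\), so \(\tilde c=1\); and \(\tilde c\) is necessarily an argument of some \(E\)-operation (a \(c\)-product never occurs as a top-level factor), which therefore vanishes. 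Without isolating this use of the structure of \(J_{a}\) and of the \(c\)-products, the claim that every shuffle term dies is not established.
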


\begin{proof}
  Assume that \(\ha\) does not vanish on the term
  \begin{equation}
    \pm\bigl[ a'_{1}\otimes b'_{1}\otimes c'_{1}\bigm|\dots\bigm| a'_{n}\otimes b'_{n}\otimes c'_{n} \bigr]
  \end{equation}
  appearing in
  \begin{equation}
    \iter{\shuffle}{3}\bigl( [a_{1}|\dots|a_{k}]\otimes[b_{1}|\dots|b_{l}]\otimes[c_{1}|\dots|c_{m}]\bigr)
  \end{equation}
  where \(n=k+l+m\ge1\). 
  By the definition of the shuffle map, two of the three factors are equal to~\(1\) in each tensor product~\(a'_{j}\otimes b'_{j}\otimes c'_{j}\).
  We must have \(b'_{j}\ne1\) for~\(j\in J_{a}\) because
  otherwise the \(b'_{j}\)-term would have value~\(0\) or~\(1\), in which case the surrounding \(a\)-term vanishes.
  This implies \(c'_{j}=1\) for all~\(j\in J_{a}\), so that the \(c\)-product~\(\tilde c\) ending in~\(c_{\jamax}\) is \(1\).
  As a consequence, the \(E\)-term having \(\tilde c\) as an argument vanishes.
  We conclude that for~\(n\ge1\) there is no term in the image of the shuffle map on which \(\ha\) assumes a non-zero value.
  In other words, \(\BB\ha\,\iter{\shuffle}{3}=0\).
\end{proof}

\begin{remark}
  \Cref{thm:BA-product-Phi-general} applies in particular to~\(\iter{\Phi}{3}=\Phi\circ(\Phi\otimes1)\),
  and a look at the proof shows that the conclusion equally holds for~\(\Phi\circ(1\otimes\Phi)\).
  Together with \Cref{thm:Bha-shuffle-vanish}, this implies (in a quite roundabout way)
  that condition~\eqref{eq:formula-Ek-El} of an hga structure
  indeed leads to an associative multiplication on the bar construction.
\end{remark}

In the remainder of this section we collect some observations that will be used in \Cref{sec:hga-shc-homass}.
We start by noting the following variant of property~\eqref{eq:def-Ek-d} of the definition of an hga,
\begin{equation}
  \label{eq:d-E-E}
  d(E_{p})\bigl(a;E_{q}(b;c_{\bullet}),c_{\bullet}\bigr)
  \eqKS (-1)^{q(p-1)}\,E_{q}(b;c_{\bullet})\,E_{p-1}(a;c_{\bullet}) + \cdots
\end{equation}
valid for all~\(k\),~\(q\ge0\) and all~\(a\),~\(b\),~\(c_{\bullet}\in A\).
This is a consequence of the convention~\eqref{eq:convention-eqKS}
because we have permuted the operations~\(E_{p-1}\) and~\(E_{q}\).
Similarly, property~\eqref{eq:Eprodfirstarg} implies
\begin{multline}
  \label{eq:E-prod-E}
  E_{k}\bigl(E_{p_{1}}(a_{1};b_{\bullet})\,E_{p_{2}}(a_{2};b_{\bullet});c_{\bullet}\bigr) \eqKS \\
  \sum_{k_{1}+k_{2}=k}\!\! (-1)^{p_{1}k_{2}}\,
  E_{k_{1}}\bigl(E_{p_{1}}(a_{1};b_{\bullet});c_{\bullet}\bigr)\,
  E_{k_{2}}\bigl(E_{p_{2}}(a_{2};b_{\bullet});c_{\bullet}\bigr)
\end{multline}
for all~\(k\),~\(p_{1}\),~\(p_{2}\ge0\) and all~\(a_{1}\),~\(a_{2}\),~\(b_{\bullet}\),~\(c_{\bullet}\in A\).

Let us write \(\Phi'=\Phi\circ(\Phi\otimes1)\).
We note that each term appearing \(\Phi'(a_{\bullet}\otimes b_{\bullet}\otimes c_{\bullet})\)
contains only one \(bc\)-product (at the very end) and that no two top-level \(b\)-terms are adjacent.

\begin{lemma}
  \label{thm:sign-change-Y}
  Let \((-1)^{\epsilon}\,W\) be a summand appearing in~\(\Phi'_{(n)}(a_{\bullet}\otimes b_{\bullet}\otimes c_{\bullet})\).
  Assume that it has at least one \(b\)-term inside an \(a\)-term,
  and let \(\mu\) be the smallest index of such a \(b\)-variable.
  Then a term~\(E_{q}(b_{\mu};\dots)\) appears as, say,
  the \(m\)-th argument~ of a term~\(E_{p_{r}}(a_{r};\dots)\),
  and \(c_{\mu}\) as the first variable inside a proper \(c\)-product~\(c_{\mu}\tilde c\).
  Let \(W'\) be obtained from~\(W\) in the same way as in~\eqref{eq:ha-sign-def-rec-term},
  and let \(\epsilon'\) be the sign exponent of~\(W'\) in~\(\Phi'_{(n)}(a_{\bullet}\otimes b_{\bullet}\otimes c_{\bullet})\).
  We have
  \begin{equation*}
    \epsilon'-\epsilon \equiv \tilde\epsilon + m + q\,(p_{i}-1) + \hat\epsilon \pmod{2}.
  \end{equation*}
  Here \(\tilde\epsilon\) is the degree of all \(E\)-operations in front of the \(a_{i}\)-term in~\(W\),
  and \(\hat\epsilon\) is the sign exponent of the part of the differential~\(d(W')\) that recombines \(c_{\mu}\) and~\(\tilde c\) to~\(c_{\mu}\tilde c\).
\end{lemma}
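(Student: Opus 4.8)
The plan is to obtain the comparison by writing out the twisting cochain of \(\Phi'=\Phi\circ(\Phi\otimes1)\) explicitly and reading off both sign exponents from that expansion. Combining the tensor-product formula \citehomog{eq.~\eqrefhomog{eq:f-mult-otimes-g}}, the composition formula \citehomog{eq.~\eqrefhomog{eq:twc-composition}} and the defining formula~\eqref{eq:def-Phi-n} for~\(\Phi\), one sees that a summand~\(W\) of \(\Phi'_{(n)}(a_{\bullet}\otimes b_{\bullet}\otimes c_{\bullet})\) is indexed by: a decomposition \(n=n_{1}+\dots+n_{r}\) into blocks~\(B_{1},\dots,B_{r}\); for each block an admissible partition~\((j_{\bullet})\) of~\(n_{s}-1\) as in~\eqref{eq:seq-def-shc-tw} describing the inner factor \(u_{s}=\Phi_{(n_{s})}(a_{B_{s}}\otimes b_{B_{s}})\) and the \(c\)-product \(v_{s}=\prod_{t\in B_{s}}c_{t}\); an admissible partition~\((k_{\bullet})\) for the outer~\(\Phi_{(r)}\); a distribution of each~\(k_{s}\) over the factors of~\(u_{s}\) via~\eqref{eq:Eprodfirstarg}; and, for each resulting operation \(E_{\kappa}(E_{d}(a;\beta_{\bullet});v_{\bullet})\), an interleaving of the \(v\)-products among the \(\beta\)-terms via~\eqref{eq:formula-Ek-El}. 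After rewriting in the hga basis of~\Cref{rem:hgaop-basis}, the sign exponent~\(\epsilon\) of~\(W\) is, modulo~\(2\), the sum of: the Koszul exponent produced by the two structural formulas; \(\sum_{s}(n_{s}-1)+(r-1)\); the Koszul permutation exponents of~\eqref{eq:convention-eqKS} (compare~\eqref{eq:d-E-E}) generated when the operations of the \(k_{s}\)-distributions are regrouped; and the interleaving exponents of~\eqref{eq:formula-Ek-El}. I would record this as one closed formula \(\epsilon=\epsilon(\text{data})\).

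Next I would locate the data relevant to the lemma. Since the arguments of \(E_{j_{i}}(a_{i};\cdot)\) inside~\(\Phi_{(n_{s})}\) are \(b\)-variables of smaller index from the same block, the \(b_{\mu}\)-term lies inside the \(a_{i}\)-term with \(a_{i},b_{\mu}\in B_{s_{0}}\) and \(\mu<i\); and, using that a \(c\)-product ends in~\(c_{j}\) only when \(b_{j}\) is top-level, one checks that \(c_{\mu}\) is indeed the first variable of a proper \(c\)-product \(c_{\mu}\tilde c\), namely the run of \(v\)-products interleaved into the \(a_{i}\)-term immediately before \(E_{q}(b_{\mu};\dots)\). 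On the level of the indexing data, the passage to~\(W'\) of~\eqref{eq:ha-sign-def-rec-term} then amounts to: splitting \(B_{s_{0}}\) at~\(\mu\) into \(\{L,\dots,\mu\}\) and \(\{\mu+1,\dots,R\}\); making \(E_{q}(b_{\mu};\dots)\) the final (\(b\)-)factor of the first new block, with unchanged arguments; reassigning the \(v\)-products interleaved between \(a_{\mu+1}\) and~\(b_{\mu}\) as direct arguments of the \(a_{\mu}\)-term (whence its new index \(p_{\mu}+\dots+p_{i-1}+m-1\)); and splitting \(c_{\mu}\tilde c\) into the two arguments \(c_{\mu}\) and~\(\tilde c\) wherever it occurs.

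Finally I would feed the data of~\(W\) and of~\(W'\) into the closed formula and subtract. The block count passes from~\(r\) to~\(r+1\) while \(\sum_{s}(n_{s}-1)\) drops by one, so these two contributions cancel; the remaining changes fall into two groups. The first group — the change in the structural Koszul exponent, the re-indexing of the inner interleaving exponents of~\eqref{eq:formula-Ek-El}, and the Koszul sign acquired when \(E_{q}(b_{\mu};\dots)\) is carried from the \(m\)-th argument slot of \(E_{p_{i}}(a_{i};\dots)\) out past \(a_{\mu+1},\dots,a_{i-1}\) and the preceding \(m-1\) arguments (which by~\eqref{eq:convention-eqKS} contributes \(q\) times the degree it crosses) — combines, after reduction modulo~\(2\), to \(\tilde\epsilon+m+q(p_{i}-1)\). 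The second change is that \(c_{\mu}\tilde c\) is now carried as two consecutive arguments \(c_{\mu},\tilde c\) rather than as one; recombining them is exactly the merging term of~\eqref{eq:def-Ek-d} applied to the operation of~\(W'\) that carries them, and its sign exponent is by definition \(\hat\epsilon\), the sign of the corresponding summand of~\(d(W')\). Hence \(\epsilon'-\epsilon\equiv\tilde\epsilon+m+q(p_{i}-1)+\hat\epsilon\pmod{2}\).

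The main obstacle is the Koszul bookkeeping in the third step, specifically for the transport of \(E_{q}(b_{\mu};\dots)\): one must check that the exponent is \(q(p_{i}-1)\) and not \(q(p_{i}-m)\), as in the recursion~\eqref{eq:ha-sign-def-rec} defining the signs of~\(\ha\). The discrepancy \(q(m-1)\) is precisely the sign picked up in crossing the first \(m-1\) arguments of the \(a_{i}\)-term; in the \(\ha\)-construction this is folded into the recursive \emph{definition} of~\(\epsilon\), whereas here it must be extracted from the explicit expansion of~\(\Phi'\). Keeping the degrees of the interleaved \(v\)-products and of the \(\beta\)-terms straight throughout is the part most prone to error.
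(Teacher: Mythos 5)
Your proposal follows the first of the two routes the paper sketches: expand \(\Phi'=\Phi\circ(\Phi\otimes1)\) completely and track every source of sign. Your list of contributions (the exponent \(n-1\) from~\eqref{eq:def-Phi-n} for the inner and outer factors, the twisting-cochain composition sign, the Koszul permutations of~\eqref{eq:convention-eqKS}, the splitting of first arguments as in~\eqref{eq:E-prod-E}, and the interleaving signs of~\eqref{eq:formula-Ek-El}) matches the paper's own enumeration almost item for item. But the paper lists these only to say that one \emph{could} proceed this way, and then gives a different argument that avoids the bookkeeping entirely: since \(\Phi'\) is a twisting cochain, its family satisfies the defining equation; with respect to the \(\kk\)-basis of~\(\hgaop(n)\) from \Cref{rem:hgaop-basis}, the differentials of \(W\) and~\(W'\) share exactly one basis term, and that term occurs neither in the differential of any other summand of \(\Phi'_{(n)}(a_{\bullet}\otimes b_{\bullet}\otimes c_{\bullet})\) nor on the right-hand side of the equation, so the two contributions are forced to cancel; this single cancellation is equivalent to the stated congruence. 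That argument needs no closed formula for \(\epsilon\) at all.

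As written, your proposal does not yet prove the lemma, because the decisive step is asserted rather than carried out: the claim that the difference of the two closed sign formulas reduces modulo \(2\) to \(\tilde\epsilon+m+q(p_{i}-1)+\hat\epsilon\) is exactly the content of the lemma, and you defer it while flagging it yourself as the part most prone to error. Moreover your heuristic for the discrepancy \(q(m-1)\) with the recursion~\eqref{eq:ha-sign-def-rec} --- ``\(q\) times the degree it crosses'' --- does not survive scrutiny: the \(m-1\) arguments preceding the \(b_{\mu}\)-term in the \(a_{i}\)-term are pure \(c\)-products, whose degree as operations is \(0\), so a Koszul crossing of them contributes nothing; compare~\eqref{eq:d-E-E} and the sign contributions in the definition of~\(\ha\), where exponents are products of an operadic degree with a \emph{number of argument slots}, not with a degree crossed. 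So either complete the direct computation in full, or adopt the paper's cancellation argument, for which you only need to exhibit the common term of \(d(W)\) and~\(d(W')\) and verify that it arises nowhere else in the twisting-cochain equation.
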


In other words, the rule used in the recursive sign definition~\eqref{eq:ha-sign-def-rec} for~\(\ha\)
also applies to~\(\Phi'\).
Note that \(c_{\mu}\tilde{c}\) may be the trailing \(c\)-product in the case of~\(\Phi'\), which is impossible for~\(\ha\).
This will be important for the pair~\ref{ha-s:36.2.2}\ in \Cref{sec:hga-shc-homass-signs}.

\begin{proof}
  We can perform the modification~\(W\to W'\) in steps: We move \(c\)-variables in front of the \(b_{\mu}\)-term
  to the preceding \(a\)-term (as in the pair~\ref{ha-s:13}\ in \Cref{sec:hga-shc-homass-pairs}),
  we move the \(b_{\mu}\)-term to the preceding \(a\)-term (as in the pair~\ref{ha-s:1})
  or out of the current \(a\)-term if it is contained in the \(a_{\mu+1}\)-term (as in~\ref{ha-s:2} and~\ref{ha-s:4}--\ref{ha-s:6}).
  
  In each step, one could verify the signs directly, keeping track of
  \begin{enumarabic}
  \item the sign in the definition~\eqref{eq:def-Phi-n} of~\(\Phi\),
  \item the sign given by~\citehomog{eq.~\eqrefhomog{eq:twc-composition-sign}} that arises
    from the composition of the twisting cochains~\(\Phi\) and~\(\Phi\otimes1\),
  \item the sign that accounts for distributing composition of maps over tensor products as in~\eqref{eq:convention-eqKS},
  \item the product of the signs that appears when the first argument in each term \(E_{i_{s}}(A;c_{\bullet})\)
    is split into its factors as in~\eqref{eq:E-prod-E}
    where \(A\) is a summand of some~\(\Phi_{(j_{t})}(a_{\bullet}\otimes b_{\bullet})\) given by~\eqref{eq:def-Phi-n}, and
  \item the product of the signs appearing each time a term~\(E_{l}(E_{j_{t}}(a_{t};b_{\bullet});c_{\bullet})\)
    is expanded according to the associativity rule~\eqref{eq:formula-Ek-El}.
  \end{enumarabic}

  Alternatively, we can argue as follows:
  Since \(\Phi'\) is a twisting cochain, the corresponding family satisfies the defining equation~\citehomog{eq.~\eqrefhomog{eq:tw-fam-2}}.
  We observe that the terms~\(W\) and~\(W'\) share exactly one term in their differentials
  with respect to the \(\kk\)-basis for~\(\hgaop(n)\) described in \Cref{rem:hgaop-basis}.
  Moreover, this common term is not produced by the differential of any other term appearing in~\(\Phi'_{(n)}(a_{\bullet}\otimes b_{\bullet}\otimes c_{\bullet})\),
  nor by any term appearing on the right-hand side of the defining equation. 
  Hence the two terms in question must cancel out, which leads to the claimed sign rule.
\end{proof}

\section{Homotopy commutativity}
\label{sec:def-hc}

{

\def\a{\Fchoose{b}{a}}
\def\b{\Fchoose{a}{b}}
\def\i{\Fchoose{j}{i}}
\def\j{\Fchoose{i}{j}}
\def\p{\Fchoose{q}{p}}
\def\q{\Fchoose{p}{q}}
\def\s{\Fchoose{t}{s}}
\def\t{\Fchoose{s}{t}}

Let \(\Phi\colon A\otimes A\Rightarrow A\) be the shm map constructed in \Cref{sec:ai-map} or, more generally for the moment,
any shm map extending the multiplication in~\(A\) such that \Cref{thm:BA-product-Phi} holds. Then
\begin{equation}
  \Phi_{(2)}(a\otimes1,1\otimes b) + (-1)^{\deg{a}\deg{b}}\,\Phi_{(2)}(1\otimes b,a\otimes 1) = a\cupone b
\end{equation}
for all~\(a\),~\(b\in A\), \cf~\cite[Prop.~4.8]{Munkholm:1974}.\footnote{%
  The second~``\(a\)'' appearing in the argument of~\(\Phi\) in~\cite[Prop.~4.8]{Munkholm:1974} should read~``\(\bar a\)''.
  We also remark that the twisting cochain condition~\citehomog{eq.~\eqrefhomog{eq:def-tw-cochain}}
  implies \(\Phi_{(2)}(a\otimes 1,1\otimes b)=0\).}
(For our~\(\Phi\) this can be read off from~\eqref{eq:def-Phi-2}.)
Now assume that \(h\) is an shm homotopy from~\(\Phi\) to~\(\Phi\circ T\),
as required by property~\ref{shc-com} of an shc algebra. A straightforward computation shows that
\begin{align}
  a\cuptwo b &= (-1)^{\deg{a}\deg{b}}\,h_{(2)}(1\otimes b,a\otimes 1) - h_{(2)}(a\otimes 1,1\otimes b) \\
  \notag &\qquad + (-1)^{\deg{a}}\, a\cupone h_{(1)}(1\otimes b) + h_{(1)}(a\otimes 1)\cupone b
\end{align}
is a \(\cuptwo\)-product for~\(A\) in the sense that it satisfies \eqref{eq:cuptwo}.
As remarked in \Cref{sec:extended-hga}, a non-trivial Gerstenhaber bracket in~\(H^{*}(A)\)
is an obstruction to the existence of a \(\cuptwo\)-product and therefore to the homotopy commutativity of~\(\Phi\).

In order to proceed, we put as an additional assumption in this section that \(A\) be extended.
Let us define \(\hc_{(0)}=\eta_{A}\) and
\begin{multline}
  \label{eq:def-hc}
  \hc_{(n)}(a_{\bullet}\otimes b_{\bullet}) \eqKS
  \!\! \sum_{\i_{1}+\dots+\i_{n}=n}\!\!  E_{\i_{1}}(\b_{1};\a_{\bullet})\cdots E_{\i_{n}}(\b_{n};\a_{\bullet}) \\
  - \sum
  E_{\j_{1}}(\a_{1};\b_{\bullet})\cdots E_{\j_{\p}}(\a_{\p};\b_{\bullet}) \,
  F_{kl}(a_{\bullet};b_{\bullet}) \,
  E_{\i_{1}}(\b_{\p+1};\a_{\bullet})\cdots E_{\i_{\q}}(\b_{n};\a_{\bullet})
\end{multline}
for~\(n\ge1\).
The first sum is over all decompositions of~\(n\) into~\(n\) non-negative integers.
The second sum is over all positive integers~\(p\),~\(q\),~\(k\),~\(l\)
and all non-negative integers~\(\j_{1}\),~\dots,~\(\j_{\p}\), \(\i_{1}\),~\dots,~\(\i_{\q}\) such that
\begin{equation}
  \p+\q = n,
  \quad\quad\quad
  \forall\; 1\le \s\le \p \qquad \j_{1}+\dots+\j_{\s} < \s
\end{equation}
and
\begin{equation}
  \label{eq:def-hc-cond-2}
  \j_{1}+\dots+\j_{\p}+k = \p, 
  \quad\quad\quad
  \i_{1}+\dots+\i_{\q}+l = \q. 
\end{equation}
Omitting the argument \(a_{\bullet}\otimes b_{\bullet}\), the formula for~\(\hc\) looks as follows in small degrees.
\begin{align}
  \hc_{(1)} &\eqKS
E_{{1}} ( \b_{{1}};\a_{{1}} ),
  \\
  \hc_{(2)} &\eqKS
\b_{{1}}\, E_{{2}} ( \b_{{2}};\a_{{1}},\a_{{2}} )
+ E_{{1}} ( \b_{{1}};\a_{{1}} ) \, E_{{1}} ( \b_{{2}};\a_{{2}} )
+ E_{{2}} ( \b_{{1}};\a_{{1}},\a_{{2}} ) \, \b_{{2}}
\\ \notag &\qquad - \a_{{1}}\,  F_{{1,1}} ( a_{{1}};b_{{2}} ) \, \b_{{2}},
\displaybreak[1] \\ 
\hc_{(3)} &\eqKS
\b_{{1}}\, \b_{{2}}\, E_{{3}} ( \b_{{3}};\a_{{1}},\a_{{2}},\a_{{3}} )
+ \b_{{1}}\, E_{{1}} ( \b_{{2}};\a_{{1}} ) \, E_{{2}} ( \b_{{3}};\a_{{2}},\a_{{3}} ) 
\\ \notag &\qquad + \b_{{1}}\, E_{{2}} ( \b_{{2}};\a_{{1}},\a_{{2}} ) \, E_{{1}} ( \b_{{3}};\a_{{3}} ) 
+ \b_{{1}}\, E_{{3}} ( \b_{{2}};\a_{{1}},\a_{{2}},\a_{{3}} ) \, \b_{{3}}
\\ \notag &\qquad + E_{{1}} ( \b_{{1}};\a_{{1}} ) \, \b_{{2}}\, E_{{2}} ( \b_{{3}};\a_{{2}},\a_{{3}} ) 
+ E_{{1}} ( \b_{{1}};\a_{{1}} ) \, E_{{1}} ( \b_{{2}};\a_{{2}} ) \, E_{{1}} ( \b_{{3}};\a_{{3}} ) 
\\ \notag &\qquad + E_{{1}} ( \b_{{1}};\a_{{1}} ) \, E_{{2}} ( \b_{{2}};\a_{{2}},\a_{{3}} ) \, \b_{{3}}
+ E_{{2}} ( \b_{{1}};\a_{{1}},\a_{{2}} ) \, \b_{{2}}\, E_{{1}} ( \b_{{3}};\a_{{3}} ) 
\\ \notag &\qquad + E_{{2}} ( \b_{{1}};\a_{{1}},\a_{{2}} ) \, E_{{1}} ( \b_{{2}};\a_{{3}} ) \, \b_{{3}}
+ E_{{3}} ( \b_{{1}};\a_{{1}},\a_{{2}},\a_{{3}} ) \, \b_{{2}}\, \b_{{3}}
\\ \notag &\qquad - \a_{{1}}\,  F_{{1,1}} ( a_{{1}};b_{{2}} ) \, \b_{{2}}\, E_{{1}} ( \b_{{3}};\a_{{3}} )
- \a_{{1}}\,  F_{{1,1}} ( a_{{1}};b_{{2}} ) \, E_{{1}} ( \b_{{2}};\a_{{3}} ) \, \b_{{3}}
\\ \notag &\qquad - \a_{{1}}\, E_{{1}} ( \a_{{2}};\b_{{1}} ) \,  F_{{1,1}} ( a_{{2}};b_{{3}} ) \, \b_{{3}}
- \a_{{1}}\, \a_{{2}}\,  F_{{2,1}} ( a_{{1}},a_{{2}};b_{{3}} ) \, \b_{{3}}
\\ \notag &\qquad - \a_{{1}}\, F_{{1,2}} ( a_{{1}};b_{{2}},b_{{3}} ) \, \b_{{2}}\, \b_{{3}}.
\end{align}

\begin{proposition}
  \label{thm:hga-shc-homcom}
  Assume that the hga~\(A\) is extended.
  The maps~\(\hc_{(n)}\) assemble to an shm homotopy~\(\hc\) from~\(\Fchoose{\Phi\circ T}{\Phi}\) to~\(\Fchoose{\Phi}{\Phi\circ T}\).
\end{proposition}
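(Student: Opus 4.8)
The plan is to verify directly that the family $\hc=(\hc_{(n)})$ satisfies the defining equation of a twisting cochain homotopy between the twisting cochain of $\Phi$ and that of $\Phi\circ T$, together with the relevant normalization conditions from \citehomog{Sec.~\refhomog{sec:shm}}. Concretely, one must show that $d(\hc_{(n)})$ equals the prescribed combination of $\Phi_{(m)}$, $(\Phi\circ T)_{(m)}$ and lower $\hc_{(m)}$'s dictated by the twisting-homotopy equation, for every $n\ge1$, and that $\hc_{(n)}$ vanishes whenever some tensor slot $a_i\otimes b_i$ equals $1\otimes1$ (and the base component behaves correctly, which for $n=1$ amounts to $\hc_{(1)}=E_1(b_1;a_1)$ being the expected primitive homotopy between $a_1b_1$ and $\pm b_1a_1$, i.e.\ $d(E_1)(b;a)$ recovering $ba-(-1)^{\deg a\deg b}ab$, matching $(\Phi\circ T)_{(1)}-\Phi_{(1)}$).

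The heart of the computation is organizing $d(\hc_{(n)})$ into two pieces coming from the two sums in \eqref{eq:def-hc}. For the first sum $\sum E_{i_1}(b_1;a_\bullet)\cdots E_{i_n}(b_n;a_\bullet)$ one applies the Leibniz rule and uses property~\eqref{eq:def-Ek-d} (in the variant \eqref{eq:d-E-E}) together with \eqref{eq:Eprodfirstarg}; the resulting terms are exactly what one gets from running the construction of $\Phi\circ T$ and its bar-construction product via \Cref{thm:BA-product-Phi} — this is the analog, with the roles of the two tensor factors swapped, of the computation proving property~\ref{shc-shm} in \Cref{sec:hga-shc-tw}. For the second sum, differentiating $F_{kl}(a_\bullet;b_\bullet)$ via \eqref{eq:d-Fkl} produces the $A_{kl}$ and $B_{kl}$ contributions: the $A_{kl}$-part, which peels an $a$-variable off the left and in the base case $k=1$ yields $E_l(a_1;b_\bullet)$, is designed to glue with the $E_{j_s}(a_s;b_\bullet)$ prefix to reproduce terms of $\Phi_{(n)}$ (the $\cupone$-free part), while the $B_{kl}$-part, which peels a $b$-variable and in the base case $l=1$ yields $-E_k(b_1;a_\bullet)$, glues with the $E_{i_t}(b_t;a_\bullet)$ suffix to reproduce terms coming from $\Phi\circ T$; the interior recombination terms in $A_{kl}$ and $B_{kl}$ cancel against the Leibniz terms hitting the $E$-operations flanking $F_{kl}$. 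One also must check the cross terms where $d$ hits an $E_{j_s}(a_s;b_\bullet)$ or $E_{i_t}(b_t;a_\bullet)$ adjacent to the $F_{kl}$ factor, and verify they match the $\hc$-precomposed-with-lower-data terms in the twisting-homotopy equation; here the constraint \eqref{eq:def-hc-cond-2} is exactly the bookkeeping that makes the degrees line up after an $F_{k-1,l}$ or $F_{k,l-1}$ is produced.

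The main obstacle, as the author frankly signals for the companion computations, is sign bookkeeping: tracking the Koszul signs introduced by the $\eqKS$ convention \eqref{eq:convention-eqKS}, the signs from the composition and tensor-product formulas for shm maps, the signs in \eqref{eq:def-Phi-n} and \eqref{eq:d-Fkl}, and the $(-1)^{\deg a\deg b}$ transposition signs coming from $T$, and checking that every term produced by $d(\hc_{(n)})$ cancels against exactly one partner with the opposite sign. Rather than attempting a closed-form sign formula, I would set up an explicit term-by-term pairing (as is done for $\ha$ in the appendix): each term of $d(\hc_{(n)})$ either appears on the right-hand side of the homotopy equation or is matched with a sibling term, and one tabulates the pairs. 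A useful sanity anchor throughout is \Cref{thm:BA-product-Phi}, which pins down the $\Phi$-side signs, plus the low-degree formulas for $\hc_{(1)},\hc_{(2)},\hc_{(3)}$ listed above, which one can differentiate by hand to calibrate conventions before proceeding to general $n$. The full verification is therefore long but mechanical, and I would relegate it to an appendix as the author does, presenting here only the normalization checks (vanishing when some $a_i\otimes b_i=1\otimes1$, which follows from the normalization of the $E_k$ and $F_{kl}$ exactly as in the proof of \Cref{thm:hga-shc-tw}) and the structure of the argument sketched above.
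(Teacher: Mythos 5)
Your proposal matches the paper's proof in both strategy and structure: the paper likewise verifies the twisting-homotopy equation by a direct term-by-term pairing (tabulated in \Cref{sec:hga-shc-homcom}), with the two sums in \eqref{eq:def-hc} feeding the two sides of the homotopy and the $A_{kl}$/$B_{kl}$ pieces of $d(F_{kl})$ recombining with the flanking $E$-groups essentially as you describe, the case analysis and signs being relegated to the appendix. The only ingredient you gloss over is the observation the paper isolates for the normalization check, namely that in any term of the second sum a given index $m$ cannot have one of its two variables in the leading $E$-group and the other in the trailing one (one of them must sit in or before the $F$-term whenever the other does); this positional fact, rather than a verbatim repetition of the argument for \Cref{thm:hga-shc-tw}, is what makes the vanishing for $a_i\otimes b_i=1\otimes1$ go through.
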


Note that \(\hc\circ T\) is a homotopy in the other direction.

\begin{proof}
  This is a yet another lengthy direct verification, see \Cref{sec:hga-shc-homcom}.
  It is helpful to observe the following:
  Consider a term appearing in the second sum of~\eqref{eq:def-hc}, and let \(1\le m\le n\).
  Then \(\a_{m}\) appears in the leading group of \(E\)-terms if and only if \(\b_{m}\) appears in the same group or in the \(F\)-term.
  Equivalently, \(\b_{m}\) appears in the trailing group of \(E\)-terms if and only if \(\a_{m}\) appears in the same group or in the \(F\)-term.

  This in particular shows that \(\hc\) satisfies the normalization condition for twisting homotopy families
  because it is impossible for any term in the second sum and any~\(m\)
  that \(\a_{m}\) appears before the \(F\)-term and \(\b_{m}\) after it.
  That the first sum vanishes if some~\(\a_{m}=1\) is clear.
\end{proof}

Using the homotopy~\(\hc\),
we can generalize the formula for the \(\cupone\)-product on the bar construction
given by Kadeishvili~\cite[Prop.~2]{Kadeishvili:2003} for~\(\kk=\Z_{2}\).
Earlier, Baues~\cite[\S 2.9]{Baues:1998} obtained the dual formula
for the cobar construction~\(\Omega\,C(X)\) of a \(1\)-reduced simplicial set~\(X\) and any~\(\kk\)
(without using the surjection operad explicitly).

\begin{corollary}
  \label{thm:cup1-BA}
  The composition~\(\BB\hc\,\shuffle_{A,A}\) is
  a coalgebra homotopy from the product with commuted factors to the regular product on~\(\BB A\).
  The associated twisting cochain homotopy~\(\FF\) is given by
  \begin{equation*}
    \FF\bigl([a_{1}|\dots|a_{k}]\otimes[b_{1}|\dots|b_{l}]\bigr) =
    \begin{cases}
      1 & \text{if \(k=l=0\),} \\
      \mp F_{kl}(a_{\bullet};b_{\bullet}) & \text{if \(k\ge1\) and \(l\ge1\),} \\
      0 & \text{otherwise,}
    \end{cases}
  \end{equation*}
  where the ``\(\mp\)'' indicates a minus sign
  combined with the sign from \citehomog{eq.~\eqrefhomog{eq:def-tw-h-fam}}.
\end{corollary}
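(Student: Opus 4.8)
The plan is to treat the two assertions separately: the first is formal, while the second is a component computation in the spirit of the proof of \Cref{thm:BA-product-Phi}.

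\emph{Step 1 — the coalgebra homotopy.} By \Cref{thm:hga-shc-homcom}, $\hc$ is an shm homotopy from $\Phi\circ T$ to $\Phi$, so the associated coalgebra homotopy $\BB\hc$ runs from $\BB(\Phi\circ T)=\BB\Phi\circ\BB T$ to $\BB\Phi$. Since $\shuffle_{A,A}\colon\BB A\otimes\BB A\to\BB(A\otimes A)$ is a morphism of dg coalgebras, precomposition with it carries coalgebra homotopies to coalgebra homotopies; hence $\BB\hc\,\shuffle_{A,A}$ is a coalgebra homotopy from $\BB\Phi\circ\BB T\circ\shuffle_{A,A}$ to $\BB\Phi\circ\shuffle_{A,A}$. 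As the shuffle map is symmetric with respect to the transposition of tensor factors, i.e.\ $\BB T\circ\shuffle_{A,A}=\shuffle_{A,A}\circ T_{\BB A,\BB A}$, the source equals $\BB\Phi\circ\shuffle_{A,A}\circ T_{\BB A,\BB A}$, and by \Cref{thm:BA-product-Phi} both $\BB\Phi\circ\shuffle_{A,A}$ and its composite with $T_{\BB A,\BB A}$ are, respectively, the product $\mu$ on $\BB A$ determined by the hga structure and that product with commuted factors. This proves the first claim.

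\emph{Step 2 — reduction to the second sum.} As in the proof of \Cref{thm:BA-product-Phi}, I would read off $\FF$ from its components: up to the sign from \citehomog{eq.~\eqrefhomog{eq:def-tw-h-fam}}, the $(k,l)$-component of $\FF$ is the composite
\[
  A^{\otimes k}\otimes A^{\otimes l}\xrightarrow{(\desusp)^{\otimes n}}\BB_{k}A\otimes\BB_{l}A\xrightarrow{\shuffle}\BB_{n}(A\otimes A)\xrightarrow{\hc}A,\qquad n=k+l.
\]
For $k=l=0$ this is $\hc_{(0)}=\eta_A$, giving the value $1$. For $n\ge1$, a summand of $\shuffle(\cc)$ with $\cc=(\desusp)^{\otimes n}(a_1\otimes\dots\otimes a_k\otimes b_1\otimes\dots\otimes b_l)$ is, up to sign, of the form $[\,a'_1\otimes b'_1|\dots|a'_n\otimes b'_n\,]$ in which every slot is either an \emph{$a$-slot} ($b'_m=1$ and $a'_m$ one of the $a_i$) or a \emph{$b$-slot} ($a'_m=1$ and $b'_m$ one of the $b_j$), with $k$ $a$-slots and $l$ $b$-slots occurring in the original orders. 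In~\eqref{eq:def-hc} the first sum vanishes on such a term: the $m$-th $E$-term there has first argument $a'_m$, so a positive-index $E$-term can sit only at an $a$-slot, and the index-$0$ $E$-terms carry no argument; hence all $n$ of the $b'$-variables are arguments of positive-index $E$-terms, and if $k\ge1$ one of the $k$ trivial $b'$-variables sitting at $a$-slots is among them, forcing the summand to vanish, while if $k=0$ all $a'_m$ equal $1$ and every index must be $0$, contrary to their summing to $n\ge1$. So only the second sum contributes.

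\emph{Step 3 — the second sum.} Fixing a summand whose $F$-term splits the indices into $\{1,\dots,q\}$ and $\{q+1,\dots,n\}$, the leading $E$-terms have first arguments $b'_1,\dots,b'_q$, the trailing ones have first arguments $a'_{q+1},\dots,a'_n$, and $F$ absorbs the remaining $a'$'s (from $\{1,\dots,q\}$) and $b'$'s (from $\{q+1,\dots,n\}$). A $b$-slot of index $\le q$ would put a $1$ among $a'_1,\dots,a'_q$, all of which are arguments of leading $E$-terms or of $F$, forcing vanishing; symmetrically no $a$-slot can have index $>q$. Hence $\{1,\dots,q\}$ is the set of $a$-slots and $\{q+1,\dots,n\}$ the set of $b$-slots, so $q=k$, $p=l$ and in particular $k,l\ge1$; all leading and trailing $E$-terms are then identities, $F$ absorbs every variable, and the summand becomes $F_{kl}(a_1,\dots,a_k;b_1,\dots,b_l)$. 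Thus exactly one pair (shuffle term, second-sum decomposition) survives, contributing $F_{kl}(a_\bullet;b_\bullet)$ up to sign, and $\FF$ vanishes when $k=0<l$ or $l=0<k$. The main obstacle is the remaining sign: tracking — as in the proof of \Cref{thm:BA-product-Phi} — the sign of the surviving shuffle term, the signs from the desuspensions and the Koszul-sign conventions in~\eqref{eq:def-hc}, and the sign from \citehomog{eq.~\eqrefhomog{eq:def-tw-h-fam}} should produce the ``$\mp$'' of the statement; this is consistent with the value $\hc_{(2)}(a\otimes1,1\otimes b)=-F_{1,1}(a;b)$ that one reads off from the displayed formula for $\hc_{(2)}$.
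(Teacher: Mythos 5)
Your proof is correct and follows essentially the same route as the paper: Steps~2 and~3 reproduce the paper's component-by-component analysis of \(\hc_{(n)}\) on shuffle terms (the first sum vanishes, and in the second sum the normalization conditions force positions \(1,\dots,q\) to be \(a\)-slots and the rest to be \(b\)-slots, so that all \(E\)-indices are zero and only the \(F_{kl}\)-term survives), with the residual sign left at the same level of precision as in the paper. Your Step~1 merely makes explicit the formal deduction of the coalgebra-homotopy assertion from Propositions~\ref{thm:hga-shc-homcom} and~\ref{thm:BA-product-Phi} together with the commutativity of the shuffle map, which the paper leaves implicit.
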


\begin{proof}
  Assume \(n=k+l>0\) and consider a term
  \begin{equation}
    \cc = \pm \bigl[\, a'_{1}\otimes b'_{1} \bigm| \ldots \bigm| a'_{n}\otimes b'_{n} \,\bigr]
  \end{equation}
  appearing in~\(\shuffle([a_{1}|\dots|a_{k}]\otimes[b_{1}|\dots|b_{l}])\).
  Any such term is mapped to~\(0\) by the first sum in the definition of~\(\hc_{(n)}\).
  Assume that \(\a'_{i}\) occurs in the leading group of \(E\)-term in a non-zero contribution to the second sum.
  Then \(\b'_{i}\) appears also in the leading group or in the \(F\)-term.
  The normalization conditions for the \(E\)-operations and the \(F\)-operations imply \(\b'_{i}\ne1\), so that we have \(\a'_{i}=1\).
  By looking at the trailing group of \(E\)-term, we similarly conclude \(\b'_{i}=1\). Hence \(p=k\) and~\(q=l\)
  in the definition of~\(\hc_{(n)}\),
  \begin{equation}
    \cc = \pm \Fchoose{\bigl[\, a_{1}\otimes 1 \bigm| \ldots \bigm| a_{k}\otimes 1 \bigm| 1\otimes b_{1} \bigm| \ldots \bigm| 1\otimes b_{l} \,\bigr]}
                      {\bigl[\, 1\otimes b_{1} \bigm| \ldots \bigm| 1\otimes b_{l} \bigm| a_{1}\otimes 1 \bigm| \ldots \bigm| a_{k}\otimes 1 \,\bigr]},
  \end{equation}
  and all variables~\(i_{t}\) and~\(j_{s}\) are \(0\).
\end{proof}

}

\section{Polynomial algebras}
\label{sec:poly}

Let \(A\) and~\(B\) be augmented dgas, and let \(\bbb\lhd B\) be a differential ideal.
Recall from~\citehomog{eq.~\eqrefhomog{eq:def-shm-strict}}
that an shm map~\(f\colon A\Rightarrow B\) is called \newterm{\(\bbb\)-strict}
if we have \(f_{(n)}\equiv 0\pmod\bbb\) for all components with~\(n\ge2\).
Similarly, a homotopy~\(h\) between~\(f\) and another shm map~\(g\) is called \newterm{\(\bbb\)-trivial}
if \(h_{(n)}\equiv 0\pmod\bbb\) for~\(n\ge1\), see~\citehomog{eq.~\eqrefhomog{eq:h-trivial-def}}.

We write \(\kk[x]\) for a polynomial algebra on a generator~\(x\) of even degree.
We start with an analogue of the first part of~\cite[Prop.~6.2]{Munkholm:1974}.
The second part will be addressed by \Cref{thm:poly1-to-dga-shc}.

\begin{proposition}
  \label{thm:poly1-Ai-strict}
  Let \(f\),~\(g\colon \kk[x]\Rightarrow A\) be \(\aaa\)-strict shm~maps.
  If there is a~\(b\in\aaa\) such that \(db=f_{(1)}(x)-g_{(1)}(x)\), then \(f\) and~\(g\) are homotopic via an \(\aaa\)-trivial homotopy.
\end{proposition}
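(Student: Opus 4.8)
The plan is to build the homotopy $h$ between $f$ and $g$ by induction on the component degree $n$, using the fact that for a polynomial algebra $\kk[x]$ on a single even-degree generator, the bar construction $\BB\kk[x]$ is very small: the only nonzero tensor components that matter are the powers $[x|\dots|x]$, and the twisting cochain data of an shm map $\kk[x]\Rightarrow A$ is completely determined by the sequence of elements $f_{(n)}(x^{\otimes n})=:f_n\in A$. So I would first reformulate the statement in terms of these sequences: an $\aaa$-strict shm map corresponds to a sequence with $f_1$ a cycle (actually $df_1=0$ since $x$ is a cycle) and $f_n\in\aaa$ for $n\ge2$ satisfying the defining twisting-cochain equations of~\citehomog{eq.~\eqrefhomog{eq:def-tw-cochain}}, which express $df_n$ as a polynomial in the $f_i$ with $i<n$ (coming from the product on $\BB\kk[x]$ and on $A$). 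A homotopy $h$ similarly corresponds to a sequence $h_n\in A$ (with $h_n\in\aaa$ for all $n\ge1$ by $\aaa$-triviality), and the homotopy relation~\citehomog{eq.~\eqrefhomog{eq:tw-h-fam-2}} expresses, in each degree $n$, an equation of the form $dh_n = f_n - g_n + (\text{expression in } f_i,g_i,h_i \text{ with } i<n)$.

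The induction then proceeds as follows. For $n=1$ we are handed $b\in\aaa$ with $db=f_1-g_1$, so set $h_1:=b$; this lies in $\aaa$ as required. For the inductive step, assume $h_1,\dots,h_{n-1}\in\aaa$ have been constructed satisfying the homotopy equations in degrees $<n$. The degree-$n$ equation demands an element $h_n\in\aaa$ with $dh_n$ equal to a specific element $z_n\in A$ assembled from $f_1,\dots,f_n$, $g_1,\dots,g_n$ and $h_1,\dots,h_{n-1}$. Since $f_n,g_n\in\aaa$ for $n\ge2$ and all the $h_i\in\aaa$, and $\aaa$ is a differential ideal, we have $z_n\in\aaa$; but I must produce an actual $\aaa$-bounding element, so I need $z_n$ to be a boundary \emph{in $\aaa$}. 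The key point is that $z_n$ is automatically a cycle: this follows formally because $f$ and $g$ are genuine twisting cochains and the homotopy equations in lower degrees hold, so $dz_n=0$ by the usual bookkeeping (the same computation that shows a chain homotopy can be extended). Now here is where the even degree of $x$ enters decisively: $z_n$ lives in degree $\deg(h_n)-1$, and one checks that $h_n$ would sit in a degree in which $\aaa$ (being a differential ideal with $\kk[x]$ mapping in via cycles) has enough room — more precisely, I would invoke that $x^{\otimes n}$ has even degree so $\desusp$ applied $n$ times lands $h_n$ in a degree congruent to $-n\pmod{|x|-1}$, and since $|x|$ is even, $|x|-1$ is odd; I expect the cleanest route is actually to avoid a dimension count and instead show directly that $z_n$ is a boundary by exhibiting a primitive built from $b$ and the lower $h_i$, mirroring Munkholm's argument in~\cite[Prop.~6.2]{Munkholm:1974}.

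\textbf{Main obstacle.} The genuinely delicate step is verifying that the obstruction cycle $z_n$ bounds \emph{inside the ideal} $\aaa$ rather than merely in $A$, and producing the bounding element explicitly with the correct signs. The abstract argument ``$z_n$ is a cycle, hence (by $\aaa$-triviality being available) we may choose $h_n$'' does not work without knowing $z_n$ is a boundary; unlike the situation of extending maps up to homotopy where one has acyclicity to appeal to, here $\aaa$ need not be acyclic. The resolution I anticipate is that $z_n$ is in fact a \emph{strict} consequence of the lower data: the polynomial structure forces $z_n$ to be expressible as $d$ of an explicit polynomial in $b$, $f_i$, $g_i$, $h_i$ (this is why the hypothesis asks for $b$ with $db=f_1-g_1$ rather than merely $[f_1]=[g_1]$ in homology). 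So the bulk of the real work is the sign-careful identification of this primitive — a bounded computation, since in each degree only finitely many terms $[x|\dots|x]$ contribute — after which $\aaa$-triviality of the resulting homotopy is immediate from $b\in\aaa$ and $f_n,g_n\in\aaa$ for $n\ge2$ together with $\aaa$ being an ideal.
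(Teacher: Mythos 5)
There is a genuine gap, and you have in fact located it yourself: your ``main obstacle'' paragraph states the problem rather than solving it. Your inductive scheme requires, at each stage, an element $h_{(n)}$ of $\aaa$ whose differential equals the obstruction $z_n$. That $z_n$ is a cycle lying in $\aaa$ follows from the usual bookkeeping, as you say, but since $\aaa$ need not be acyclic this gives you nothing; you then ``anticipate'' that an explicit primitive exists and defer its construction. That construction is the entire content of the proposition, and the idea needed to produce it is absent from your plan. The paper's mechanism is a two-step reduction followed by a closed-form (not degree-by-degree) formula: first one reduces, via transitivity of shm homotopies, to the case where $g$ is \emph{strict} with $g(x)=f_{(1)}(x)=:a$, and there the homotopy is manufactured out of the \emph{higher} components of $f$ itself, namely
\begin{equation*}
  h_{(n)}(x^{k_{1}},\dots,x^{k_{n}}) = (-1)^{n-1}\!\!\sum_{k'+k''=k_{n}-1}\!\! f_{(n+1)}(x^{k_{1}},\dots,x^{k_{n-1}},x^{k'},x)\,a^{k''},
\end{equation*}
which is $\aaa$-trivial precisely because $f$ is $\aaa$-strict and the components borrowed have filtration $n+1\ge2$. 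Once both maps may be assumed strict, the element $b$ enters only through the elementary algebra homotopy $h(x^{k})=\sum_{k'+k''=k-1}f(x)^{k'}b\,g(x)^{k''}$, concentrated in filtration $1$. Nothing in your proposal anticipates either the shift $f_{(n+1)}\rightsquigarrow h_{(n)}$ or the splitting of the problem into these two homotopies, and without some such device the induction cannot be closed.

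A secondary error: your reformulation of the data is too small. The bar construction $\BB\,\kk[x]$ has basis $[x^{k_{1}}|\dots|x^{k_{n}}]$ over all tuples of positive exponents, so the components $f_{(n)}$ are \emph{not} determined by the single sequence $f_{(n)}(x^{\otimes n})$; the twisting-cochain and homotopy equations must be solved simultaneously for all tuples $(k_{1},\dots,k_{n})$ (this is visible in the paper's formula above and in the normalization conditions it must satisfy, e.g.\ vanishing when some $k_{m}=0$). This does not by itself doom your approach, but it does mean the ``bounded computation in each degree'' you describe is larger than you suggest, and the aside about the parity of $\deg{x}$ is not leading anywhere useful.
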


\begin{proof}
  We assume first that \(g\) is strict with~\(g(x)=f_{(1)}(x)=\vcentcolon a\).
  It is a direct calculation to verify that an \(\aaa\)-trivial homotopy from~\(f\) to~\(g\) is given by the family
  \begin{equation}
    h_{(n)}(x^{k_{1}},\dots,x^{k_{n}}) =
    (-1)^{n-1}\!\!\sum_{k'+k''=k_{n}-1}\!\! f_{(n+1)}(x^{k_{1}},\dots,x^{k_{n-1}},x^{k'},x)\,a^{k''}
  \end{equation}
  for~\(n\ge1\), see \Cref{sec:poly1-Ai-strict}. The decomposition~\(k'+k''=k_{n}-1\) is into non-neg\-a\-tive integers.
  Note that \(h_{(n)}(x^{k_{\bullet}})\) vanishes for any~\(n\ge1\) if \(k_{n}\le1\)
  or \(k_{m}=0\) for some~\(m<n\). This in particular shows that \(h_{(n)}\) satisfies
  the normalization condition for twisting homotopy families.

  As a consequence of this and \citehomog{Lemma~\refhomog{thm:tw-h-equiv-rel}},
  we can assume that both~\(f\) and~\(g\) are strict in order to prove the general case. Then
  \begin{equation}
    h(x^{k}) = \!\! \sum_{k'+k''=k-1} \!\! f(x)^{k'} b\,g(x)^{k''}
  \end{equation}
  is an algebra homotopy from~\(f\) to~\(g\)
  (in the sense of~\cite[\S 1.11]{Munkholm:1974}) taking values in~\(\aaa\).
  It gives rise to an \(\aaa\)-trivial shm homotopy~\(\tilde h\) from~\(f\) to~\(g\)
  with~\(\tilde h_{(1)}=h\) and~\(\tilde h_{(n)}=0\) for~\(n\ge2\).
\end{proof}

The following is a variant of~\cite[Lemma~7.3]{Munkholm:1974} with an explicit homotopy.
Recall that any hga is canonically an shc algebra in the sense
of Stasheff--Halperin, and in the sense of Munkholm if it is extended.
We refer to~\citehomog{Sec.~\refhomog{sec:shc}} for the definition
of an (\(\aaa\)-natural) shc map.

\begin{proposition}
  \label{thm:poly1-to-dga-shc}
  Let \(A\) be an 
  hga such that all operations~\(E_{k}\), \(k\ge2\), take values in a common ideal~\(\aaa\lhd A\).
  Let \(a\in A\) be a cocycle of even degree
  and assume that there is a~\(b\in\aaa\) such that \(db=E_{1}(a;a)\).
  Then the dga map
  \begin{equation*}
    f\colon \kk[x] \to A,
    \quad
    x^{k}\mapsto a^{k}
  \end{equation*}
  is an \(\aaa\)-natural shc map.
\end{proposition}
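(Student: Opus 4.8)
The plan is to unwind the definition of an $\aaa$-natural shc map from \citehomog{Sec.~\refhomog{sec:shc}} and to reduce the statement to \Cref{thm:poly1-Ai-strict} together with the hypothesis $db = E_1(a;a)$. Recall first that $A$ is an shc algebra by \Cref{thm:hga-shc}, and that $\kk[x]$ is commutative, so its canonical hga structure is trivial; hence \Cref{thm:BA-product-Phi} applied to $\kk[x]$ shows that the shm map $\Phi_{\kk[x]}$ is the strict multiplication and that the product induced on $\BB\kk[x]$ is the ordinary shuffle product. Consequently, promoting the strict unital dga map $f$ to an shc map comes down to producing an shm homotopy $F$ from $\Phi_A\circ(f\otimes f)$ to $f\circ\Phi_{\kk[x]}$, regarded as shm maps $\kk[x]\otimes\kk[x]\Rightarrow A$; the remaining coherence conditions are routine because $f$ and $\Phi_{\kk[x]}$ are strict. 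Passing to bar constructions, this is the same as exhibiting the coalgebra map $\BB f\colon\BB\kk[x]\to\BB A$ as multiplicative up to a coalgebra homotopy, whose associated twisting cochain homotopy must interpolate --- by \Cref{thm:BA-product-Phi} --- between $f\circ\EE_{\kk[x]}$ and $\EE_A\circ(\BB f\otimes\BB f)$, where $\EE$ denotes the twisting cochain of the hga product on a bar construction.

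Next I would compute this discrepancy. As $f$ preserves bar length and $\EE_{\kk[x]}$ is concentrated on its length-$(1,0)$ and $(0,1)$ components (the hga structure of $\kk[x]$ being trivial), the difference $\EE_A\circ(\BB f\otimes\BB f) - f\circ\EE_{\kk[x]}$ is supported on inputs $[a^{k}]\otimes[a^{l_1}|\dots|a^{l_q}]$, where it equals $\pm E_q(a^{k};a^{l_1},\dots,a^{l_q})$. For $q\ge2$ this lies in $\aaa$ because $E_q$ does. For $q=1$ I would use property~\eqref{eq:Eprodfirstarg} to write $E_1(a^{k};a^{l}) = \sum_{i} a^{i}\,E_1(a;a^{l})\,a^{k-1-i}$ and then prove by induction on $l$ that $E_1(a;a^{l}) = d\,c_l$ for an explicit $c_l\in\aaa$: the base case $c_1 = b$ is the hypothesis, and the inductive step comes from rearranging the differential identity~\eqref{eq:def-Ek-d} for $E_2$ evaluated at $(a;a,a^{l-1})$, which expresses $E_1(a;a^{l})$ through $E_1(a;a^{l-1})$, $E_1(a;a)$ and $E_2(a;a,a^{l-1})$ --- the first two being differentials of $\aaa$-elements by induction and by hypothesis, the last lying in $\aaa$. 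Hence $E_1(a^{k};a^{l}) = d\bigl(\sum_{i} a^{i}\,c_l\,a^{k-1-i}\bigr)$ with the argument in $\aaa$, so the whole discrepancy is, modulo $\aaa$, the differential of an explicit $\aaa$-valued cochain.

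With the leading discrepancy disposed of, I would build the higher components of $F$ along the lines of the proof of \Cref{thm:poly1-Ai-strict}: the data still to be produced consists of homotopies between shm maps out of the polynomial algebra $\kk[x]$ (respectively $\kk[x]\otimes\kk[x]$), and an explicit family in the spirit of the $h_{(n)}$ there supplies them, because at each stage the base-level discrepancy has been arranged to be a coboundary of an $\aaa$-element. Retaining throughout which contributions lie in $\aaa$ and which are differentials of $\aaa$-valued cochains then gives the $\aaa$-naturality of the resulting shc map; its naturality in $A$ is evident from the formulas.

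The step I expect to be the main obstacle is combinatorial rather than conceptual: pinning down a clean explicit homotopy family and verifying the twisting-homotopy-family equations while keeping the Koszul signs under control through the twisting-cochain composition and tensor-product conventions, exactly as in the appendices devoted to $\ha$ and $\hc$. A secondary point needing care is to confirm that the inductive construction of the $c_l$ respects the normalization conditions for the hga operations, so that the homotopy genuinely defines a twisting homotopy family.
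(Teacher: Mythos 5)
Your overall strategy---reduce to producing an \(\aaa\)-trivial shm homotopy from \(\Phi_{A}\circ(f\otimes f)\) to \(f\circ\mu_{\kk[x]}\), and feed into it the two observations that \(E_{k}\) (\(k\ge2\)) lands in \(\aaa\) and that \(E_{1}(a;a^{l})\) is the coboundary of an element \(c_{l}\in\aaa\), by induction on \(l\) via the differential identity for \(E_{2}(a;a,a^{l-1})\) together with property~\eqref{eq:Eprodfirstarg}---is exactly the mechanism of the paper's proof; your elements \(c_{l}\) are precisely what gets packaged into the terms \(E_{i_{n}+1}(a^{k_{n}};\dots,a^{l'},a)\) and \(a^{\dots}b\,a^{\dots}\) of the explicit family~\eqref{hh-1}. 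But two steps of your reduction have genuine gaps. First, an shm homotopy between shm maps \(\kk[x]\otimes\kk[x]\Rightarrow A\) is a coalgebra homotopy on \(\BB(\kk[x]\otimes\kk[x])\), not on \(\BB\kk[x]\otimes\BB\kk[x]\); the two are related by precomposition with \(\shuffle\), which goes the wrong way for your purposes, and transferring a homotopy back across this quasi-isomorphism by abstract means would lose control of \(\aaa\)-triviality, which is a strict (non-homotopy-invariant) condition and the whole point of the statement. Consequently the discrepancy you must kill is not the family of single terms \(\pm E_{q}(a^{k};a^{l_{\bullet}})\) but the full components \(\Phi_{(n)}(a^{k_{\bullet}}\otimes a^{l_{\bullet}})\) of~\eqref{eq:def-Phi-n}, which are sums of products of several \(E\)-terms, e.g.\ \(a^{k_{1}}E_{1}(a^{k_{2}};a^{l_{1}})\,E_{1}(a^{k_{3}};a^{l_{2}})\,a^{l_{3}}\); these are neither single \(E_{q}\)-terms nor individually in \(\aaa\).

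Second, the appeal to \Cref{thm:poly1-Ai-strict} for the higher components does not go through as stated: that proposition compares two \(\aaa\)-strict shm maps out of \(\kk[x]\), whereas \(\Phi_{A}\circ(f\otimes f)\) is not \(\aaa\)-strict (already its component \(-a^{k_{1}}E_{1}(a^{k_{2}};a^{l_{1}})\,a^{l_{2}}\) need not lie in \(\aaa\), since only \(E_{k}\) with \(k\ge2\) are assumed to take values there) and the domain is now \(\kk[x]\otimes\kk[x]\). So ``an explicit family in the spirit of the \(h_{(n)}\) there'' is exactly the missing content: the paper must write down the three-group formula~\eqref{hh-1}, check its normalization conditions, and verify the twisting-homotopy equation by the case analysis of \Cref{sec:poly1-to-dga-shc}. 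Your sketch correctly isolates the algebraic input that makes such a family possible, but the construction and verification of the family---which is the actual proof---is deferred, and the deferral rests on the two inaccurate reductions above.
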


\begin{proof}
  \def\xx{x^{k_{\bullet}}\otimes x^{l_{\bullet}}}
  We have to show that the dga map
  \begin{equation}
    \kk[x]\otimes\kk[x] \xrightarrow{\mu_{\kk[x]}} \kk[x]\stackrel{f}{\longrightarrow} A
  \end{equation}
  is homotopic to the shm~map
  \begin{equation}
    \kk[x]\otimes\kk[x] \xrightarrow{f\otimes f} A\otimes A\xRightarrow{\,\Phi_{A}\,} A
  \end{equation}
  via an \(\aaa\)-strict shm homotopy. Such a homotopy from the latter map to the former is given by the following twisting homotopy family,
  where we write \(\xx\) for the sequence of arguments~\(x^{k_{1}}\otimes x^{l_{1}}\),~\dots,~\(x^{k_{n}}\otimes x^{l_{n}}\):
  \begin{align}
    h_{(1)}(\xx) &= 0, \\
    h_{(2)}(\xx) &= \sum_{\substack{l'+l''=\\l_{1}-1}} a^{k_{1}}\,E_{2}(a^{k_{2}};a^{l'},a)\,a^{l''+l_{2}} \\
      \notag &\quad - \sum_{\substack{k'+k''=\\k_{2}-1}}\;\sum_{\substack{l'+l''=\\l_{1}-1}} a^{k_{1}+k'+l'} b\,a^{k''+l''+l_{2}}, \\
    \label{hh-1}
    h_{(n)}(\xx) &= \sum\;\sum_{\substack{l'+l''=\\l_{n-1}-1}}
      E_{i_{1}}(a^{k_{1}};a^{l_{\bullet}})\cdots E_{i_{n-1}}(a^{k_{n-1}};a^{l_{\bullet}}) \\*
      \notag &\qquad\qquad {}\cdot E_{i_{n}+1}(a^{k_{n}};\dots,a^{l_{n-2}},a^{l'},a)\,a^{l''+l_{n}} \\
    \notag
    &\quad - \sum\;\sum_{\substack{k'+k''=\\k_{n}-1}}\;\sum_{\substack{l'+l''=\\l_{n-1}-1}}
    E_{i_{1}}(a^{k_{1}};a^{l_{\bullet}})\cdots E_{i_{n-1}}(a^{k_{n-1}};a^{l_{\bullet}}) \\
      \notag &\qquad\qquad 
      {}\cdot a^{k'+l'} b\,a^{k''+l''+l_{n}}\\
    \notag
      &\quad - \sum\;\sum_{\substack{k'+k''=\\k_{n}-1}}\;\sum_{\substack{l'+l''=\\l_{n-2}-1}}
      E_{i_{1}}(a^{k_{1}};a^{l_{\bullet}})\cdots E_{i_{n-2}}(a^{k_{n-2}};a^{l_{\bullet}}) \\
      \notag &\qquad\qquad {}\cdot E_{i_{n-1}}(a^{k_{n-1}};\dots,a^{l_{n-3}},a^{l'})\,a^{k'} b\,a^{k''+l''+l_{n-1}+l_{n}}
  \end{align}
  for~\(n\ge3\).
  The first sum in the first group of~\eqref{hh-1} extends over
  all decompositions~\(n-1=i_{1}+\dots+i_{n}\) into \(n\)~non-negative integers such that
  \begin{equation}
    \forall\; 1\le s \le n\quad i_{1}+\dots+i_{s} < s.
  \end{equation}
  and the first sums in the other two groups of~\eqref{hh-1}
  similarly over all decompositions~\(n-2=i_{1}+\dots+i_{n-1}\) into \(n-1\)~non-negative integers such that
  \begin{equation}
    \forall\; 1\le s \le n-1\quad i_{1}+\dots+i_{s} < s.
  \end{equation}
  The decompositions into~\(k'+k''\) and~\(l'+l''\) also involve non-negative integers.
  Note that the formula for~\(n=2\) is the same as the one for~\(n\ge3\)
  with the sum over~\(l'+l''=l_{n-2}-1\) omitted.
  Also observe that \(b\) is of even degree~\(2\deg{a}-2\) and that \(i_{n}+1\ge2\)
  in the first group of~\eqref{hh-1} so that \(h\) is indeed \(\aaa\)-trivial.
  The verification of the homotopy property is a lengthy computation, see \Cref{sec:poly1-to-dga-shc}.

  That the normalization condition for twisting homotopy families is satisfied
  follows by direct inspection: Assume that \(k_{i}=l_{i}=0\) for some~\(i\).
  It is clear that each term in the sum for~\(n=2\)
  vanishes if \(k_{2}=0\) or~\(l_{1}=0\).
  Similarly, each term in the first two sums for~\(n\ge3\) vanishes
  if \(k_{n}=0\) or~\(l_{n-1}=0\) or~\(l_{m}=0\) for~\(m\le n-2\).
  For each term in the third sum we get this for~\(k_{n}=0\) or~\(l_{n-2}=0\) or~\(l_{m}=0\) for~\(m\le n-3\).
  In the case~\(k_{n-1}=0\) we finally use that the corresponding \(E\)-term has another argument since \(i_{n-1}\ge1\).
\end{proof}

\section{Concluding remark}

It would certainly be desirable to have a more conceptual proof of \Cref{thm:hga-shc}
than our explicit construction.
(\Cref{rem:ez} might indicate a first step in this direction.)
We point out, however, that the operad~\(\hgaop\) governing hgas has non-trivial homology
in the degrees we are interested in. In fact, since \(\hgaop(n)\) models
the configuration space of \(n\)~points in the plane,
its homology \(H_{k}(\hgaop(n))\) is non-zero for all~\(0\le k<n\), see
the references given in \Cref{rem:hgaop-basis}
as well as~\cite{Sinha:2013}.
The Gerstenhaber bracket is a non-trivial element in~\(H_{1}(\hgaop(n))\) for~\(n\ge2\),
and its iterations give non-zero elements of higher degree.
  
A direct way to see this is the following:
The Hochschild cohomology of an algebra~\(A\) is an algebra over~\(H(\hgaop)\).
If \(A\) is commutative, then \(\HH^{0}(A)=A\), and \(\HH^{1}(A)=\Der(A)\) are the derivations of~\(A\).
The Gerstenhaber bracket of~\(a\in A\) and~\(D\in\Der(A)\) is given by~\([D,a]=D(a)\in A\),
\cf~\cite[Props.~19,~22, Example~52]{Belmans:2018}.
If \(A=\kk[x_{1},\dots,x_{n}]\) is a polynomial algebra, then the expression
in \(n\)~elements from~\(\HH^{*}(A)\) involving a \(k\)-fold iterated bracket,
\begin{equation}
  \bigr[ \partial_{1}, \ldots \bigl[\partial_{k-1}, [\partial_{k}, x_{1}\cdots x_{k}] \bigr] \ldots \bigr]\cdot\underbrace{1\cdots 1}_{\mathclap{n-k-1\text{ factors}}} = 1,
\end{equation}
shows \(H_{k}(\hgaop(n))\ne0\) for any~\(0\le k< n\).

\appendix

\section{Proof of Proposition \ref{thm:hga-shc-tw}}
\label{sec:hga-shc-tw}

In this and the following appendices we outline several proofs that are elementary,
but lengthy computations. The main difficulties are to distinguish between the many cases to consider
and to keep track of the signs.
We write ``\textbf{X}~\(\to\)~\textbf{Y}'' to indicate
that the terms for case~\textbf{X} cancel with or result in the terms for case~\textbf{Y}.

\medbreak

\noindent\textbf{Terms produced by~\(d(\Phi_{(n)})\)}
\medskip
\begin{caselist}
\item \(b_{j}\)-variable moved out of an \(E(a_{i};...)\)-term to the left

  In this case we have \(i>1\).
  \begin{caselist}
  \item \(j<i-1\)
    \label{phi:12}
    \cancelswith{phi:21}
  \item \(j=i-1\)
    \label{phi:11}
    \cancelswith{phiphi:1}
  \end{caselist}
\item \(b\)-variable moved out of an \(E(a_{i};...)\)-term to the right
  \begin{caselist}
  \item \(i<n\)
    \label{phi:21}
    \cancelswith{phi:12}
  \item \(i=n\)
    \label{phi:22}
    \cancelswith{phi1:2}

    In this case we have \(j=n-1\).
  \end{caselist}
\item Two \(b\)-variables multiplied together in an \(E\)-term
  \label{phi:3}
  \cancelswith{phi1:1}

  In this case the corresponding \(a\)-variables are further to the left.
\end{caselist}

\bigbreak
\noindent\textbf{Terms appearing in~\(\Phi_{(n-1)}(\dots,a_{i}a_{i+1}\otimes b_{i}b_{i+1},\dots)\)}
\medskip

\begin{caselist}[resume]
\item \(i<n-1\)
  \label{phi1:1}
  \cancelswith{phi:3}
\item \(i=n-1\)
  \label{phi1:2}
  \cancelswith{phi:22}
\end{caselist}

\bigbreak
\noindent\textbf{Terms appearing in~\(\Phi_{(k)}\,\Phi_{(n-k)}\)}

\medskip

\begin{caselist}[resume]
\item all such terms
  \label{phiphi:1}
  \cancelswith{phi:11}
\end{caselist}

\section{Proof of Proposition \ref{thm:hga-shc-homass}}
\label{sec:hga-shc-homass}

We write \(\Phi'=\Phi\circ(\Phi\otimes1)\) and \(\Phi''=\Phi\circ(1\otimes\Phi)\).
Because of the recursive definition of the sign for each term appearing in~\(\ha\),
we first pair up the terms appearing in the equation under consideration,
\begin{multline}
  d(\ha_{(n)})(a_{\bullet}\otimes b_{\bullet}\otimes c_{\bullet}) \eqKS \\
  \sum_{k=1}^{n-1}(-1)^{k}\,\ha_{(n-1)}(a_{\bullet}\otimes b_{\bullet}\otimes c_{\bullet},a_{k}a_{k+1}\otimes b_{k}b_{k+1}\otimes c_{k}c_{k+1},a_{\bullet}\otimes b_{\bullet}\otimes c_{\bullet}) \\
  + \sum_{k=0}^{n}\Bigl( \Phi'_{(k)}(a_{\bullet}\otimes b_{\bullet}\otimes c_{\bullet})\,\ha_{(n-k)}(a_{\bullet}\otimes b_{\bullet}\otimes c_{\bullet}) \\
  - (-1)^{k}\,\ha_{(k)}(a_{\bullet}\otimes b_{\bullet}\otimes c_{\bullet})\, \Phi''_{(n-k)}(a_{\bullet}\otimes b_{\bullet}\otimes c_{\bullet}) \Bigr).
\end{multline}
In a second step we show that the signs work out the way they should.

\subsection{Pairing the terms}
\label{sec:hga-shc-homass-pairs}

We assume \(n\ge2\). Recall that we write \(\jamax\) for the maximal index~\(j\) such that \(b_{j}\) does not appear in a \(bc\)-product.

\medskip

\noindent\textbf{Terms produced by~\(d(\ha_{(n)})\)}
\medskip
\begin{caselist}
\item Terms coming from an \(a_{i}\)-term
    
  In this case we have \(i>1\).
  \begin{caselist}
  \item First argument moved out to the left
    \begin{caselist}
    \item \(b_{j}\)-term moved out to the left
      
      In this case the previous term is an \(a_{i-1}\)-term.
      \begin{caselist}
      \item \(j<i-1\)
	\label{ha:1111}
        \cancelswith{ha:1211}
      \item \(j=i-1\)
	\label{ha:1112}

        In this case either \(c_{j}c_{j+1}\) appears in some \(c\)-product or \(j=\jamax\).
        \begin{caselist}
	\item \(c_{j}c_{j+1}\) appears in a \(c\)-product
          \label{ha:11121}

          Note that this \(c\)-product may contain other factors.
          \begin{caselist}
	  \item \(c_{j}c_{j+1}\) appears in a \(c\)-product inside a \(b\)-term that is argument to an \(a\)-term
            \label{ha:111211}
            \cancelswith{ha:1413}
          \item \(c_{j}c_{j+1}\) appears in a \(c\)-product inside a \(bc\)-product that is argument to an \(a\)-term
            \label{ha:111215a}
	    \cancelswith{ha:14231}
          \item \(c_{j}c_{j+1}\) appears in a \(c\)-product that is argument to an \(a\)-term
            \label{ha:111213}
	    \cancelswith{ha:1333}
          \item \(c_{j}c_{j+1}\) appears in a \(c\)-product inside a top-level \(b\)-term before the \(a_{n}\)-term
            \label{ha:111215}
	    \cancelswith{ha:23}
	  \item \(c_{j}c_{j+1}\) appears in a \(c\)-product inside final \(bc\)-product\\ \hspace*{4ex}
            \label{ha:111216}
	    \cancelswith{ha:331}
	  \end{caselist}
          \item \(j=\jamax\)
   
            In this case \(c_{\jamax}c_{\jamax+1}\) does not appear in a \(c\)-product.
	    \begin{caselist}
	    \item \(b_{\jamax+1}\) is first \(b\)-variable in \(bc\)-product inside an \(a\)-term
              \begin{caselist}
	      \item \(b_{\jamax+1}\) is the only \(b\)-variable in this \(bc\)-product\\ \hspace*{4ex}
                \label{ha:1112211}
                \cancelswith{ha:1313212}
              \item \(b_{\jamax+1}\) is not the only \(b\)-variable in this \(bc\)-product\\ \hspace*{4ex}
                \label{ha:1112212}
                \cancelswith{ha:131221}
              \end{caselist}
	    \item \(b_{\jamax+1}\) is first \(b\)-variable in final \(bc\)-product
              \begin{caselist}
	      \item \(b_{\jamax+1}\) is not the only \(b\)-variable in this \(bc\)-product\\ \hspace*{4ex}
                \label{ha:1112221}
		\cancelswith{ha:12131}

                This case means \(\jamax<n-1\).
	      \item \(b_{\jamax+1}\) is the only \(b\)-variable in this \(bc\)-product
                \label{ha:1112222}
                \cancelswith{phiha:21}

		This case means \(\jamax=n-1\).
              \end{caselist}
            \end{caselist}
        \end{caselist}
      \end{caselist}
    \item \(bc\)-product moved out to the left
      
      Let's write the final \(c\)-variable of the \(bc\)-product as \(c_{j}\).
      \begin{caselist}
      \item \(j<i-1\)
	\label{ha:1121}
        \cancelswith{ha:1221}
      \item \(j=i-1\)
	\label{ha:1122}
        \cancelswith{haphi:2}

        All \(c_{k}\) with~\(k>j\) appear as single arguments, not inside a proper \(c\)-product.
      \end{caselist}
    \item \(c\)-product moved out to the left
      
      Let's write the final \(c\)-variable as \(c_{j}\).
      \begin{caselist}
      \item Previous term is \(a_{i-1}\)-term
	\label{ha:1131}
        \cancelswith{ha:1231}

	In this case we have \(j<i-1\).
      \item Previous term is (top-level) \(b_{i-1}\)-term
	\begin{caselist}
        \item \(j<i-1\)
          \label{ha:11321}
          \cancelswith{ha:22}
	\item \(j=i-1\)
          \label{ha:11322}
          \cancelswith{phiha:1}

          Since \(b_{j}\) is top-level, we cannot have \(j=\jamax\).
        \end{caselist}
      \end{caselist}
    \end{caselist}
    
  \item Last argument moved out to the right
    \begin{caselist}
    \item \(b_{j}\)-term moved out to the right
      \begin{caselist}
      \item Next term is \(a_{i+1}\)-term
	\label{ha:1211}
        \cancelswith{ha:1111}
      \item Next term is top-level \(b_{i}\)-term inside \(ab\)-product
	\label{ha:1212}
        \cancelswith{ha1:5}

	In this case we have \(j=i-1\), and \(c_{i-1}c_{i}\) appears in a \(c\)-product (ending in~\(c_{i}\)).
      \item Next term is first \(b\)-term of final \(bc\)-product
	
	In this case we have \(i=n\) and \(j=\jamax\).
	\begin{caselist}
        \item \(\jamax=1\)
          \label{ha:12133}
          \cancelswith{haphi:11}
	\item \(\jamax>1\)
          \begin{caselist}
	  \item \(c_{\jamax-1}c_{\jamax}\) does not appear in a \(c\)-product (inside the final \(bc\)-product)
            \label{ha:12131}
            \cancelswith{ha:1112221}
	  \item \(c_{\jamax-1}c_{\jamax}\) appears in a \(c\)-product (inside the final \(bc\)-product)\\ \hspace*{4ex}
            \label{ha:12132}
            \cancelswith{ha:330}
          \end{caselist}
        \end{caselist}
      \end{caselist}
    \item \(bc\)-product moved out to the right
      
      In this case the next term cannot be a top-level \(b\)-term.
      \begin{caselist}
      \item Next term is \(a_{i+1}\)-term
	\label{ha:1221}
        \cancelswith{ha:1121}
      \item Next term is first \(b\)-term of final \(bc\)-product
	\label{ha:1223}
        \cancelswith{ha:3122}
      \end{caselist}
    \item \(c\)-product moved out to the right
      \begin{caselist}
      \item Next term is \(a_{i+1}\)-term
	\label{ha:1231}
        \cancelswith{ha:1131}
      \item Next term is top-level \(b_{i}\)-term inside \(ab\)-product
	\label{ha:1232}
        \cancelswith{ha:21}
      \item Next term is first \(b\)-term of final \(bc\)-product
	\label{ha:1233}
        \cancelswith{ha:311}
      \end{caselist}
    \end{caselist}
  \item Two arguments multiplied together
    \begin{caselist}
    \item First argument is a \(b_{j}\)-term
      \begin{caselist}
      \item Second argument is a \(b_{j+1}\)-term
	\label{ha:1311}
        \cancelswith{ha1:4}

	In this case we have \(c_{j}c_{j+1}\) inside some \(c\)-product.
      \item Second argument is a \(bc\)-product

	In this case we have \(j=\jamax\).
        \begin{caselist}
	\item \(\jamax=1\)
          \label{ha:13121}
          \cancelswith{haphi:122}
        \item \(\jamax>1\)
	  \begin{caselist}
          \item \(b_{\jamax-1}\) appears in a top-level \(b\)-term
            \label{ha:131221}
	    \cancelswith{ha:1112212}

            In this case \(c_{\jamax-1}c_{\jamax}\) does not appear in any \(c\)-product.
          \item \(b_{\jamax-1}\) does not appear in a top-level \(b\)-term
            \label{ha:131222}
	    \cancelswith{ha:14230}

	    In this case \(c_{\jamax-1}c_{\jamax}\) appears in a \(c\)-product inside the given \(bc\)-product.
          \end{caselist}
        \end{caselist}
      \item Second argument is a \(c\)-product

	Let's write \(c_{k}\) for the final \(c\)-variable in the \(c\)-product.
        \begin{caselist}
	\item \(k < j\)
          \label{ha:13131}
          \cancelswith{ha:1412}
	\item \(k = j\)
          \begin{caselist}
	  \item \(c\)-product is the single variable~\(c_{j}\)
            \begin{caselist}
            \item \(k=j=1\)
              \label{ha:1313211}
              \cancelswith{haphi:121}
            \item \(k=j>1\)
              \label{ha:1313212}
              \cancelswith{ha:1112211}

              In this case \(b_{j-1}\) is top-level.
            \end{caselist}
	  \item \(c\)-product is proper \(c\)-product
            \label{ha:131322}
            \cancelswith{ha:1422212}
          \end{caselist}

          In this case we have \(j=k=\jamax\).
        \end{caselist}
      \end{caselist}
    \item First argument is a \(bc\)-product
      \label{ha:132}
      \cancelswith{ha:142122}

      In this case the second argument is also a \(bc\)-product.
    \item First argument is a \(c\)-product

      Let's write \(c_{k}\) for the last variable appearing in the \(c\)-product.
      \begin{caselist}
      \item Second argument is a \(b\)-term
	\label{ha:1331}
        \cancelswith{ha:1411}
      \item Second argument is a \(bc\)-product
	\label{ha:1332}
        \cancelswith{ha:14211}
      \item Second argument is a \(c\)-product
        \label{ha:1333}
        \cancelswith{ha:111213}

	In this case we have \(k<\jamax\) since all~\(c_{l}\) with~\(l>\jamax\) appear in \(bc\)-products.
        Hence \(b_{k}\) appears in a top-level \(b\)-term.
      \end{caselist}
    \end{caselist}
  \item Terms produced by one of the arguments of the \(a\)-term
    \begin{caselist}
    \item Terms produced by a \(b\)-term as argument
      
      Let's write the \(b\)-variable as~\(b_{j}\).
      \begin{caselist}
      \item \(c\)-product moved out to the left
	\label{ha:1411}
	\cancelswith{ha:1331}
      \item \(c\)-product moved out to the right
	\label{ha:1412}
	\cancelswith{ha:13131}
      \item Two \(c\)-products multiplied together
	\label{ha:1413}
         \cancelswith{ha:111211}
    
         Let's write \(c_{k}\) for the final \(c\)-variable of the first \(c\)-product.
         Then \(k<j\le\jamax\), hence \(b_{k}\) appears in a top-level \(b\)-term.
      \end{caselist}
    \item Terms produced by a \(b\)-term inside a \(bc\)-product
      
      Let's write the \(b\)-variable as~\(b_{j}\).
      \begin{caselist}
      \item \(c\)-product moved out to the left
 
        Let's write the final \(c\)-variable as~\(c_{k}\).
	\begin{caselist}
        \item \(b\)-term is the first \(b\)-term in the \(bc\)-product
	  \label{ha:14211}
          \cancelswith{ha:1332}
        \item \(b\)-term is not the first \(b\)-term in the \(bc\)-product
          \begin{caselist}
          \item \(k<j-1\)
            \label{ha:142121}
            \cancelswith{ha:14221}
          \item \(k=j-1\)
	    \label{ha:142122}
            \cancelswith{ha:132}

            In this case the \(c\)-product is a single variable~\(c_{i-1}\).
          \end{caselist}
        \end{caselist}
 
      \item \(c\)-product moved out to the right
 
        Let's write the final \(c\)-variable as~\(c_{k}\).
	\begin{caselist}
          \item \(b_{j}\) is not the last \(b\)-variable in the \(bc\)-product
            \label{ha:14221}
            \cancelswith{ha:142121}
          \item \(b_{j}\) is the last \(b\)-variable in the \(bc\)-product

	    In this case we have \(k=j-1\).
            \begin{caselist}
	    \item \(\jamax<j-1\)
              \label{ha:142221}
              \cancelswith{ha1:662}

	      In this case the \(c\)-product is the single variable~\(c_{j-1}\). Also, \(b_{j-1}\) is part of the same \(bc\)-product
              for otherwise \(c_{n-1}\) would be at the end of the previous \(bc\)-product.
            \item \(\jamax=j-1\)
	      \label{ha:1422212}
              \cancelswith{ha:131322}
            \end{caselist}
        \end{caselist}
      \item Two \(c\)-products multiplied together
    
        Let's write \(c_{k}\) for the final \(c\)-variable of the first \(c\)-product.
        \begin{caselist}
	\item \(b_{k}\) appears in a \(b\)-term inside an \(a\)-term
          \label{ha:14230}
          \cancelswith{ha:131222}

          In this case we have \(k=\jamax\).
        \item \(b_{k}\) appears in a top-level \(b\)-term
          \label{ha:14231}
          \cancelswith{ha:111215a}
        \item \(b_{k}\) appears in a \(bc\)-product (inside some \(a\)-term)
          \label{ha:14232}
          \cancelswith{ha1:661}

          In this case the second \(c\)-product is the single variable~\(c_{k+1}\),
          and \(b_{k+1}\) appears in the same \(bc\)-product as~\(b_{k}\).
        \end{caselist}
      \end{caselist}
    \end{caselist}
  \end{caselist}
\item Terms coming from a top-level \(b\)-term in the \(ab\)-product
  \begin{caselist}
  \item \(c\)-product moved out to the left
    \label{ha:21}
    \cancelswith{ha:1232}
  \item \(c\)-product moved out to the right
    \label{ha:22}
    \cancelswith{ha:11321}
  \item Two \(c\)-products multiplied together
    \label{ha:23}
    \cancelswith{ha:111215}
    
    Let's write \(c_{j}\) for the final \(c\)-variable of the first \(c\)-product.
    Then \(j<i\), hence \(b_{j}\) appears in a top-level \(b\)-term.
  \end{caselist}
\item Terms coming from a \(b_{i}\)-term in the final \(bc\)-product
  %
  \begin{caselist}
  \item \(c\)-product moved out to the left

    Let's write the final \(c\)-variable as~\(c_{j}\).
    \begin{caselist}
    \item \(b_{i}\) is the first \(b\)-variable in the \(bc\)-product
      \label{ha:311}
      \cancelswith{ha:1233}
    \item \(b_{i}\) is not the first \(b\)-variable in the \(bc\)-product
      \begin{caselist}
      \item \(j<i-1\)
	\label{ha:3121}
        \cancelswith{ha:321}
      \item \(j=i-1\)
	\label{ha:3122}
        \cancelswith{ha:1223}

        In this case the \(c\)-product is a single variable~\(c_{i-1}\).
      \end{caselist}
    \end{caselist}
  \item \(c\)-product moved out to the right

    Let's write the final \(c\)-variable as~\(c_{j}\).
    \begin{caselist}
    \item \(i<n\)
      \label{ha:321}
      \cancelswith{ha:3121}
    \item \(i=n\)

      In this case we have \(j=n-1\).
      \begin{caselist}
      \item \(\jamax<n-1\)
	\label{ha:3221}
        \cancelswith{ha1:672}

        In this case the \(c\)-product is the single variable~\(c_{n-1}\).
	Also, \(b_{n-1}\) is part of the same \(bc\)-product
        for otherwise \(c_{n-1}\) would be at the end of the previous \(bc\)-product.
      \item \(\jamax=n-1\)
	\label{ha:3222}
        \cancelswith{phiha:22}
      \end{caselist}
    \end{caselist}
  \item Two \(c\)-products multiplied together
    
    Let's write \(c_{j}\) for the final \(c\)-variable of the first \(c\)-product.
    Then \(b_{j}\) appears either in a top-level \(b\)-term or in a \(bc\)-product.
    \begin{caselist}
    \item \(b_{j}\) appears in an \(a\)-term
      \label{ha:330}
      \cancelswith{ha:12132}

      In this case we have \(j=\jamax\).
    \item \(b_{j}\) appears in a top-level \(b\)-term
      \label{ha:331}
      \cancelswith{ha:111216}
    \item \(b_{j}\) appears in a \(bc\)-product
      \label{ha:332}
      \cancelswith{ha1:671}

      In this case the second \(c\)-product is the single variable~\(c_{j+1}\),
      and \(b_{j+1}\) appears in the same \(bc\)-product as~\(b_{j}\).
    \end{caselist}
  \end{caselist}
\end{caselist}

\bigbreak
\noindent\textbf{Terms appearing in~\(\ha_{(n-1)}(\dots,a_{i}a_{i+1}\otimes b_{i}b_{i+1}\otimes c_{i}c_{i+1},\dots)\)}
\medskip

\begin{caselist}[resume]
\item \(b_{i}b_{i+1}\) in a \(b\)-term that is argument to an \(a\)-term
  \label{ha1:4}
  \cancelswith{ha:1311}
\item \(b_{i}b_{i+1}\) in a top-level \(b\)-term in the \(ab\)-product
  \label{ha1:5}
  \cancelswith{ha:1212}
\item \(b_{i}b_{i+1}\) in a \(b\)-term appearing in a \(bc\)-product
  \begin{caselist}
  \item \(c_{i}c_{i+1}\) in a \(bc\)-product inside an \(a\)-term
    \begin{caselist}
    \item \(c_{i}c_{i+1}\) is argument to a \(b\)-term in the \(bc\)-product
      \label{ha1:661}
      \cancelswith{ha:14232}
    \item \(c_{i}c_{i+1}\) at the end of the \(bc\)-product
      \label{ha1:662}
      \cancelswith{ha:142221}
    \end{caselist}
  \item \(c_{i}c_{i+1}\) in final \(bc\)-product
    \begin{caselist}
    \item \(c_{i}c_{i+1}\) is argument to a \(b\)-term in the \(bc\)-product
      \label{ha1:671}
      \cancelswith{ha:332}
    \item \(c_{i}c_{i+1}\) at the end of the \(bc\)-product
      \label{ha1:672}
      \cancelswith{ha:3221}
    \end{caselist}
  \end{caselist}
\end{caselist}

\bigbreak
\noindent\textbf{Terms appearing in~\(\bigl(\Phi\circ(\Phi\otimes1)\bigr)_{(k)}\,\ha_{(n-k)}\)}
\medskip

\begin{caselist}[resume]
\item \(k<n\)
  \label{phiha:1}
  \cancelswith{ha:11322}
\item \(k=n\)
  \begin{caselist}
  \item \(b_{n-1}\) appears before \(a_{n}\)-term in~\(\Phi'\)
    \label{phiha:21}
    \cancelswith{ha:1112222}

    In this case the final \(c\)-product in~\(\Phi'\) is \(c_{n}\) only.
  \item \(b_{n-1}\) appears inside \(a_{n}\)-term in~\(\Phi'\)
    \label{phiha:22}
    \cancelswith{ha:3222}

    In this case \(\Phi'\) ends with a proper \(c\)-product.
  \end{caselist}
\end{caselist}

\bigbreak
\noindent\textbf{Terms appearing in~\(\ha_{(k)}\,\bigl(\Phi\circ(1\otimes\Phi)\bigr)_{(n-k)}\)}
\medskip

\noindent
Note that all \(b\)'s and \(c\)'s in~\(\Phi''\) appear inside \(bc\)-products with no proper \(c\)-products.
\begin{caselist}[resume]
\item \(k=0\) 
  \begin{caselist}
  \item There is only the final \(bc\)-product in~\(\Phi''\) (preceded by the product of all \(a\)-variables)
    \label{haphi:11}
    \cancelswith{ha:12133}
  \item There is a \(bc\)-product inside some \(a\)-term in~\(\Phi''\)
    \begin{caselist}
    \item The first \(bc\)-product is \(b_{1}c_{1}\)
      \label{haphi:121}
      \cancelswith{ha:1313211}
    \item The first \(bc\)-product contains \(b_{2}\)
      \label{haphi:122}
      \cancelswith{ha:13121}
    \end{caselist}
  \end{caselist}
\item \(k>0\)
  \label{haphi:2}
  \cancelswith{ha:1122}
\end{caselist}

\subsection{Checking the signs}
\label{sec:hga-shc-homass-signs}

\def\hapair#1#2{Pair~\ref{#1}~\(\leftrightarrow\)~\ref{#2}\ \kern0pt}
\medbreak
\begin{caselist}
\item \hapair{ha:1111}{ha:1211}
  \label{ha-s:1}
  \begin{caselist}
  \item \(\mu=j=\nu\)
  \item \(\mu=j<\nu\)
    
  \item \(\mu<j\le\nu\)
  \label{ha-s:1.3}
    \begin{caselist}
    \item \(b_{\mu}\) appears before \(a_{i-1}\)-term
      \begin{caselist}
      \item \(c_{\mu}\) appears before \(a_{i-1}\)-term
      \item \(c_{\mu}\) appears in \(c\)-product that is argument to \(a_{i-1}\)-term
      \item \(c_{\mu}\) appears inside a \(b\)-term that is argument to \(a_{i-1}\)-term
      \item \(c_{\mu}\) appears in \(c\)-product that is argument to \(a_{i}\)-term
      \item \(c_{\mu}\) appears inside \(b_{j}\)-term (that is argument to \(a_{i}\)-term)
      \item \(c_{\mu}\) appears inside another \(b\)-term or \(bc\)-product that is argument to \(a_{i}\)-term
      \item \(c_{\mu}\) appears after \(a_{i}\)-term
      \end{caselist}
    \item \(b_{\mu}\) appears inside \(a_{i-1}\)-term
      \begin{caselist}
      \item \(c_{\mu}\) appears in \(c\)-product that is argument to \(a_{i-1}\)-term
      \item \(c_{\mu}\) appears inside a \(b\)-term that is argument to \(a_{i-1}\)-term
      \item \(c_{\mu}\) appears in \(c\)-product that is argument to \(a_{i}\)-term
      \item \(c_{\mu}\) appears inside \(b_{j}\)-term (that is argument to \(a_{i}\)-term)
      \item \(c_{\mu}\) appears inside another \(b\)-term or \(bc\)-product that is argument to \(a_{i}\)-term
      \item \(c_{\mu}\) appears after \(a_{i}\)-term
      \end{caselist}
    \end{caselist}
  \end{caselist}
  
\item \hapair{ha:111211}{ha:1413} and the following pairs
  \label{ha-s:2}
  
  In~\ref{ha:111211}, \(b_{j}\)-term is the first argument to \(a_{i}\)-term.
  \begin{caselist}
  \item \(\mu=j<\nu\)

    This case is built into the recursive sign formula.
  \item \(\mu<j<\nu\)
    \begin{caselist}
    \item \(c_{\mu}\) appears before \(a_{i}\)-term
    \item \(c_{\mu}\) appears in \(c\)-product that is argument to \(a_{i}\)-term
    \item \(c_{\mu}\) appears inside \(b_{j}\)-term (that is argument to \(a_{i}\)-term)
    \item \(c_{\mu}\) appears inside another \(b\)-term or \(bc\)-product that is argument to \(a_{i}\)-term
    \item \(c_{\mu}\) appears after \(a_{i}\)-term
    \end{caselist}
  \end{caselist}
  
\item \hapair{ha:111215a}{ha:14231}
  
  Subsumed under case~\ref{ha-s:2}
  
\item \hapair{ha:111213}{ha:1333}
  \label{ha-s:4}
  
  Subsumed under case~\ref{ha-s:2}
  
\item \hapair{ha:111215}{ha:23}
  \label{ha-s:5}
  
  Subsumed under case~\ref{ha-s:2}
  
\item \hapair{ha:111216}{ha:331}
  \label{ha-s:6}
  
  Subsumed under case~\ref{ha-s:2}
  
\item \hapair{ha:1112211}{ha:1313212} and the following pair
  \label{ha-s:7}
  \begin{caselist}
  \item \(\mu=j=\nu\)
    \begin{caselist}
    \item \(b_{\nu+1}\)-term appears in \(a_{i}\)-term
    \item \(b_{\nu+1}\)-term appears in later \(a\)-term
    \end{caselist}
  \item \(\mu<j=\nu\)
    \begin{caselist}
    \item \(c_{\mu}\) appears before \(a_{i}\)-term
    \item \(c_{\mu}\) appears in \(c\)-product that is argument to \(a_{i}\)-term
    \item \(c_{\mu}\) appears inside \(b_{\nu}\)-term (that is argument to \(a_{i}\)-term)
    \item \(c_{\mu}\) appears inside \(bc\)-product starting with~\(b_{\nu+1}\) (if it is argument to \(a_{i}\)-term)
    \item \(c_{\mu}\) appears after \(a_{i}\)-term
    \end{caselist}
  \end{caselist}
  
\item \hapair{ha:1112212}{ha:131221}
  
  Subsumed under case~\ref{ha-s:7}
  
\item \hapair{ha:1112221}{ha:12131}
  \begin{caselist}
  \item \(\mu=j=\nu\)
  \item \(\mu<j=\nu\)
    \begin{caselist}
    \item \(c_{\mu}\) appears before \(a_{i}\)-term
    \item \(c_{\mu}\) appears in \(c\)-product that is argument to \(a_{i}\)-term
    \item \(c_{\mu}\) appears inside \(b_{\nu}\)-term (that is argument to \(a_{i}\)-term)
    \item \(c_{\mu}\) appears after \(a_{i}\)-term
    \end{caselist}
  \end{caselist}

\item \hapair{ha:1112222}{phiha:21}
  \begin{caselist}
  \item \(\mu=\nu\)
  \item \(\mu<\nu\)
    
    This cases uses \Cref{thm:sign-change-Y}.
    \begin{caselist}
    \item \(c_{\mu}\) appears before the \(a_{n}\)-term
    \item \(c_{\mu}\) appears in \(b_{n-1}\)-term
    \item \(c_{\mu}\) appears in \(c\)-product that is argument to \(a_{n}\)-term
    \item \(c_{\mu}\) appears in \(b_{n}\)-term
    \end{caselist}
  \end{caselist}
  
\item \hapair{ha:1121}{ha:1221}
  \label{ha-s:11}
  \begin{caselist}
  \item \(\mu=\nu\)
  \item \(\mu<\nu\)
    
    This case is analogous to~\ref{ha-s:1.3}\ (with some subcases omitted).
  \end{caselist}
  
\item \hapair{ha:1122}{haphi:2}
  \begin{caselist}
  \item \(\mu=\nu\)
  \item \(\mu<\nu\)
    \begin{caselist}
    \item \(c_{\mu}\) appears before \(a_{i}\)-term
    \item \(c_{\mu}\) appears in \(bc\)-product that is first argument to \(a_{i}\)-term
    \end{caselist}
  \end{caselist}
  
\item \hapair{ha:1131}{ha:1231}
  \label{ha-s:13}
  
  This case is analogous to case~\ref{ha-s:11}\
  (by considering a \(c\)-product as a ``\(bc\)-product with no \(b\)-terms and several trailing \(c\)-variables'')
  with the following additional case for the recursive step.
  \begin{caselist}
  \item \(c_{\mu}\) appears inside the \(c\)-product that is first argument to \(a_{i}\)-term
  \end{caselist}
  
\item \hapair{ha:11321}{ha:22}
  
  This case is analogous to case~\ref{ha-s:13}

\item \hapair{ha:11322}{phiha:1}

  Note that \(k\ne\mu\).
  \begin{caselist}
  \item \(k<\mu\le\nu\)
    \label{ha-s:15.1}
    \begin{caselist}
    \item \(\mu=\nu\)
    \item \(\mu<\nu\)
    \end{caselist}
  \item \(\mu<k<\nu\)

    Using \Cref{thm:sign-change-Y}, this case is recursively reduced to~\ref{ha-s:15.1}
  \end{caselist}

\item \hapair{ha:1212}{ha1:5}
  \begin{caselist}
  \item \(|J_{a}|=2\), that is, \(J_{a}=\{\mu,\nu\}\)
    \label{ha-s:16.1}

    Let's say that \(b_{\nu}\) appears in the \(a_{k}\)-term.
    \begin{caselist}
    \item \(c_{\mu}\) appears before \(a_{k}\)-term
    \item \(c_{\mu}\) appears in \(c\)-product that is argument to \(a_{k}\)-term before \(b_{\nu}\)-term
    \item \(c_{\mu}\) appears inside \(b_{\nu}\)-term
    \item \(c_{\mu}\) appears in \(c\)-product that is argument to \(a_{k}\)-term after \(b_{\nu}\)-term,
      but before any \(bc\)-product (if \(a_{k}\)-term has such arguments)
    \item \(c_{\mu}\) appears inside first \(bc\)-product that is argument to \(a_{k}\)-term (if \(a_{k}\)-term has such an argument)
    \end{caselist}
  \item \(|J_{a}|>2\)
    \begin{caselist}
    \item \(\mu=j<\nu-1\)
      \label{ha-s:16.2.1}

      Let \(k=\min(J_{a}\setminus\{\mu\})\). There are only top-level \(b\)-terms
      between the \(a_{k}\)-term and the \(a\)-term containing the \(b_{k}\)-term.
      Using the cases~\ref{ha-s:1}\ and~\ref{ha-s:13},
      we can take the \(b_{k}\)-term (together with the preceding \(c\)-products)
      out of the \(a\)-term and move it right after the \(a_{k}\)-term.
      The sign change is analogous to the recursive definition~\eqref{eq:ha-sign-def-rec} of the sign.
      We then use induction with \(J_{a}\setminus\{k\}\) instead of~\(J_{a}\) until we reach the case~\ref{ha-s:16.1}
    \item \(\mu<j\le\nu-1\)
      \begin{caselist}
      \item \(c_{\mu}\) appears before \(a_{i}\)-term
      \item \(c_{\mu}\) appears inside \(b_{i-1}\)-term
      \item \(c_{\mu}\) appears inside other \(b\)-term that is argument to \(a_{i}\)-term
      \item \(c_{\mu}\) appears in \(c\)-product that is argument to \(a_{i}\)-term
      \item \(c_{\mu}\) appears inside \(b_{i}\)-term
      \item \(c_{\mu}\) appears after \(b_{i}\)-term
      \end{caselist}
    \end{caselist}
  \end{caselist}

\item \hapair{ha:12133}{haphi:11}

\item \hapair{ha:12132}{ha:330}
  \begin{caselist}
  \item \(|J_{a}|=2\), that is, \(J_{a}=\{\mu,\nu\}\) with~\(\mu=\nu-1\)
    \begin{caselist}
    \item \(b_{\mu}\) appears before \(a_{n}\)-term
    \item \(b_{\mu}\) appears inside \(a_{n}\)-term
    \end{caselist}
  \item \(|J_{a}|>2\)
    \begin{caselist}
    \item \(c_{\mu}\) appears before \(a_{n}\)-term
    \item \(c_{\mu}\) appears inside \(b_{\nu}\)-term
    \item \(c_{\mu}\) appears inside other \(b\)-term that is argument to \(a_{n}\)-term
    \item \(c_{\mu}\) appears in \(c\)-product that is argument to \(a_{n}\)-term
    \item \(c_{\mu}\) appears in final \(bc\)-product
    \end{caselist}
  \end{caselist}

\item \hapair{ha:1223}{ha:3122}
  \label{ha-s:19}
  \begin{caselist}
  \item \(J_{a}=\{\nu\}\)
  \item \(|J_{a}|>1\)
    \label{ha-s:19.2}
    \begin{caselist}
    \item \(b_{\mu}\) appears before \(a_{n}\)-term
      \begin{caselist}
      \item \(c_{\mu}\) appears before \(a_{n}\)-term
      \item \(c_{\mu}\) appears in \(c\)-product that is argument to \(a_{n}\)-term
      \item \(c_{\mu}\) appears in \(b\)-term that is argument to \(a_{n}\)-term
      \item \(c_{\mu}\) appears in \(bc\)-product that is argument to \(a_{n}\)-term, but not the last argument
	\label{ha-s:19.2.1.4}
      \item \(c_{\mu}\) appears in final \(bc\)-product inside \(a_{n}\)-term (but not as trailing \(c\)-variable)
	\label{ha-s:19.2.1.5}
      \end{caselist}
    \item \(b_{\mu}\) appears inside \(a_{n}\)-term
      \begin{caselist}
      \item \(c_{\mu}\) appears in \(c\)-product that is argument to \(a_{n}\)-term
      \item \(c_{\mu}\) appears in \(b\)-term that is argument to \(a_{n}\)-term
      \item \(c_{\mu}\) appears in \(bc\)-product that is argument to \(a_{n}\)-term, but not the last argument
	\label{ha-s:19.2.2.3}
      \item \(c_{\mu}\) appears in final \(bc\)-product inside \(a_{n}\)-term (but not as trailing \(c\)-variable)
	\label{ha-s:19.2.2.4}
      \end{caselist}
    \end{caselist}
  \end{caselist}
  
\item \hapair{ha:1232}{ha:21}
  \label{ha-s:20}

  This case is analogous to~\ref{ha-s:19}\
  (again by considering a \(c\)-product as a ``\(bc\)-product with no \(b\)-terms and several trailing \(c\)-variables'')
  with the following additional subcases for the case~\ref{ha-s:19.2}\ 
  and with some subcases omitted.
  Note that \(\mu\ne i\).
  \begin{caselist}
  \item \(b_{\mu}\) appears before \(a_{i}\)-term
    \begin{caselist}
    \item \(c_{\mu}\) appears in final \(c\)-product inside \(a_{i}\)-term
    \item \(c_{\mu}\) appears in or after \(b_{i}\)-term
    \end{caselist}
  \item \(b_{\mu}\) appears inside \(a_{i}\)-term
    \begin{caselist}
    \item \(c_{\mu}\) appears in final \(c\)-product inside \(a_{i}\)-term
    \item \(c_{\mu}\) appears in or after \(b_{i}\)-term
    \end{caselist}
  \item \(b_{\mu}\) appears after \(b_{i}\)-term
    \label{ha-s:20.3}
  \end{caselist}
  
\item \hapair{ha:1233}{ha:311}
  
  This case is analogous to~\ref{ha-s:19}\ and~\ref{ha-s:20}
  with some subcases omitted.
  
\item \hapair{ha:1311}{ha1:4}

  We have \(\mu\le j\le\nu-1\).
  \begin{caselist}
  \item \(\mu=j=\nu-1\), that is, \(J_{a}=\{\nu-1,\nu\}\)
    \begin{caselist}
      \item \(c_{\mu}c_{\mu+1}\) is argument to \(a_{i}\)-term
      \item \(c_{\mu}c_{\mu+1}\) appears after \(a_{i}\)-term and before any \(bc\)-products
	\label{ha-s:22.1.2}
      \item \(c_{\mu}c_{\mu+1}\) appears in (first) \(bc\)-product inside \(a_{i}\)-term
      \item \(c_{\mu}c_{\mu+1}\) appears in (first) \(bc\)-product inside later \(a\)-term
      \item \(c_{\mu}c_{\mu+1}\) appears in final \(bc\)-product
	
	This case is analogous to~\ref{ha-s:22.1.2}
    \end{caselist}
  \item \(\mu=j<\nu-1\)

    Let \(k=\min(J_{a}\setminus\{\mu,\mu+1\})\).
    \begin{caselist}
    \item \(i\le k\)
      \label{ha-s:22.2.1}

      This case is analogous to~\ref{ha-s:16.2.1}\ We can take the \(b_{k}\)-term out of the \(a\)-term where it appears.
    \item \(i>k\)
      
      Using the cases~\ref{ha-s:1}\ and~\ref{ha-s:13},
      we can move the \(b_{\mu}\)-term and the \(b_{\mu+1}\)-term (together with the preceding \(c\)-products)
      to the \(a_{i-1}\)-term. This inductively reduces this case to~\ref{ha-s:22.2.1}
    \end{caselist}
  \item \(\mu<j\le\nu-1\)
    \begin{caselist}
    \item \(b_{\mu}\) appears before \(a_{i}\)-term
      \begin{caselist}
      \item \(c_{\mu}\) appears before \(a_{i}\)-term
      \item \(c_{\mu}\) appears inside \(a_{i}\)-term before \(b_{j}\)
      \item \(c_{\mu}\) appears inside \(b_{j}\)-term
      \item \(c_{\mu}\) appears inside \(b_{j+1}\)-term
      \item \(c_{\mu}\) appears after \(b_{j+1}\)-term
      \end{caselist}
    \item \(b_{\mu}\) appears inside \(a_{i}\)-term
      \begin{caselist}
      \item \(c_{\mu}\) appears inside \(a_{i}\)-term before \(b_{j}\)
      \item \(c_{\mu}\) appears inside \(b_{j}\)-term
      \item \(c_{\mu}\) appears inside \(b_{j+1}\)-term
      \item \(c_{\mu}\) appears after \(b_{j+1}\)-term
      \end{caselist}
    \end{caselist}
  \end{caselist}

\item \hapair{ha:13121}{haphi:122}
  \label{ha-s:23}

  This case uses \citehomog{eq.~\eqrefhomog{eq:twc-composition-sign}}
  for the sign of the composition of two shm maps
  and the convention~\eqref{eq:convention-eqKS} for our ``\(\eqKS\)'' notation.
  Note that the sign exponent~\(n-1\) in~\eqref{eq:def-Phi-n} leads to the sign exponent~\(n-1\)
  in~\((\Phi\circ(1\otimes\Phi))_{(n)}\). 

\item \hapair{ha:131222}{ha:14230}
  \begin{caselist}
  \item \(\mu<\nu-1\)
    \begin{caselist}
    \item \(c_{\mu}\) appears before \(a_{i}\)-term
    \item \(c_{\mu}\) appears in \(c\)-product that is argument to \(a_{i}\)-term before \(b_{\nu}\)
    \item \(c_{\mu}\) appears in \(c\)-product inside \(b_{\nu}\)-term
    \item \(c_{\mu}\) appears between~\(b_{\nu+1}\) and~\(c_{\nu}\)
    \end{caselist}
  \item \(\mu=\nu-1\)
    \begin{caselist}
    \item \(b_{\mu}\) appears before \(a_{i}\)-term
    \item \(b_{\mu}\) appears inside \(a_{i}\)-term
    \end{caselist}    
  \end{caselist}
  
\item \hapair{ha:13131}{ha:1412}
  \label{ha-s:25}
  \begin{caselist}
  \item \(\mu=j=\nu\)
  \item \(\mu=j<\nu\)
    \begin{caselist}
    \item \(c_{\mu}\) appears inside \(a_{i}\)-term
    \item \(c_{\mu}\) appears after \(a_{i}\)-term
    \end{caselist}
  \item \(\mu<j\le\nu\)
    \begin{caselist}
    \item \(b_{\mu}\) appears before \(a_{i}\)-term
      \begin{caselist}
      \item \(c_{\mu}\) appears before \(a_{i}\)-term
      \item \(c_{\mu}\) appears inside \(a_{i}\)-term before \(b_{\nu}\)-term
      \item \(c_{\mu}\) appears inside \(b_{\nu}\)-term
      \item \(c_{\mu}\) appears in \(c\)-product following \(b_{\nu}\)-term (together with~\(c_{k}\))
      \item \(c_{\mu}\) appears inside \(a_{i}\)-term after \(c_{k}\)
      \item \(c_{\mu}\) appears after \(a_{i}\)-term
      \end{caselist}
    \item \(b_{\mu}\) appears inside \(a_{i}\)-term
      \begin{caselist}
      \item \(c_{\mu}\) appears inside \(a_{i}\)-term before \(b_{\nu}\)-term
      \item \(c_{\mu}\) appears inside \(b_{\nu}\)-term
      \item \(c_{\mu}\) appears in \(c\)-product following \(b_{\nu}\)-term (together with~\(c_{k}\))
      \item \(c_{\mu}\) appears inside \(a_{i}\)-term after \(c_{k}\)
      \item \(c_{\mu}\) appears after \(a_{i}\)-term
      \end{caselist}
    \end{caselist}
  \end{caselist}
  
\item \hapair{ha:1313211}{haphi:121}

  This case is analogous to~\ref{ha-s:23}

\item \hapair{ha:131322}{ha:1422212}
  \begin{caselist}
  \item \(J_{a}=\{\mu=\nu-1,\nu\}\)
    \begin{caselist}
    \item \(b_{\mu}\) appears before \(a_{i}\)-term
    \item \(b_{\mu}\) appears inside \(a_{i}\)-term
    \end{caselist}
  \item \(|J_{a}|>2\)
    \begin{caselist}
    \item \(b_{\mu}\) appears before \(a_{i}\)-term
      \begin{caselist}
      \item \(c_{\mu}\) appears before \(a_{i}\)-term
      \item \(c_{\mu}\) appears inside \(a_{i}\)-term before \(b_{\nu}\)-term
      \item \(c_{\mu}\) appears inside \(b_{\nu}\)-term
      \item \(c_{\mu}\) appears inside \(a_{i}\)-term in \(c\)-product following \(b_{\nu}\)-term

	This case uses \ref{ha-s:25}
      \end{caselist}
    \item \(b_{\mu}\) appears inside \(a_{i}\)-term
      \begin{caselist}
      \item \(c_{\mu}\) appears inside \(a_{i}\)-term before \(b_{\nu}\)-term
      \item \(c_{\mu}\) appears inside \(b_{\nu}\)-term
      \item \(c_{\mu}\) appears inside \(a_{i}\)-term in \(c\)-product following \(b_{\nu}\)-term

	This case uses \ref{ha-s:25}
      \end{caselist}
    \end{caselist}
  \end{caselist}
  
\item \hapair{ha:132}{ha:142122}
  \label{ha-s:28}
  \begin{caselist}
  \item \(J_{a}=\{\nu\}\)
  \item \(|J_{a}|>1\)
    \begin{caselist}
    \item \(b_{\mu}\) appears before \(a_{i}\)-term
      \begin{caselist}
      \item \(c_{\mu}\) appears before \(a_{i}\)-term
      \item \(c_{\mu}\) appears inside \(a_{i}\)-term before first \(bc\)-product
      \item \(c_{\mu}\) appears inside first \(bc\)-product
      \end{caselist}
    \item \(b_{\mu}\) appears inside \(a_{i}\)-term
      \begin{caselist}
      \item \(c_{\mu}\) appears inside \(a_{i}\)-term before first \(bc\)-product
      \item \(c_{\mu}\) appears inside first \(bc\)-product
      \end{caselist}
    \end{caselist}
  \end{caselist}
  
\item \hapair{ha:1331}{ha:1411}
  \begin{caselist}
  \item \(J_{a}=\{\nu\}\)
  \item \(|J_{a}|>1\)
    
    Let's write \(b_{j}\) for the \(b\)-term following \(c_{k}\).
    \begin{caselist}
    \item \(b_{\mu}\) appears before \(a_{i}\)-term
      \begin{caselist}
      \item \(c_{\mu}\) appears before \(a_{i}\)-term
      \item \(c_{\mu}\) appears inside \(a_{i}\)-term before \(c\)-product containing \(c_{k}\)
      \item \(c_{\mu}\) appears in \(c\)-product containing \(c_{k}\)
      \item \(c_{\mu}\) appears in \(b_{j}\)-term
      \item \(c_{\mu}\) appears inside \(a_{i}\)-term after \(b_{j}\)-term
      \item \(c_{\mu}\) appears after \(a_{i}\)-term
      \end{caselist}
    \item \(b_{\mu}\) appears inside \(a_{i}\)-term before \(c\)-product containing \(c_{k}\)
      \begin{caselist}
      \item \(c_{\mu}\) appears inside \(a_{i}\)-term before \(c\)-product containing \(c_{k}\)
      \item \(c_{\mu}\) appears in \(c\)-product containing \(c_{k}\)
      \item \(c_{\mu}\) appears in \(b_{j}\)-term
      \item \(c_{\mu}\) appears inside \(a_{i}\)-term after \(b_{j}\)-term
      \item \(c_{\mu}\) appears after \(a_{i}\)-term
      \end{caselist}
    \item \(\mu=j\)

      This case uses \ref{ha-s:20}
    \end{caselist}
  \end{caselist}
  
\item \hapair{ha:1332}{ha:14211}

  This case is analogous to~\ref{ha-s:28}\ with the following additional cases.
  \begin{caselist}
  \item \(|J_{a}|>1\), \(b_{\mu}\) appears before \(a_{i}\)-term, \(c_{\mu}\) appears inside (formerly second) \(bc\)-product
  \item \(|J_{a}|>1\), \(b_{\mu}\) appears inside \(a_{i}\)-term, \(c_{\mu}\) appears inside (formerly second) \(bc\)-product
  \end{caselist}

\item \hapair{ha:142121}{ha:14221}
  \label{ha-s:31}

  Let's write the two \(b\)-variables in question as \(b_{j}\) and~\(b_{j+1}\).
  \begin{caselist}
  \item \(J_{a}=\{\nu\}\)
  \item \(|J_{a}|>1\)
    \begin{caselist}
    \item \(c_{\mu}\) appears before \(b_{j}\)-term
    \item \(c_{\mu}\) appears inside \(b_{j}\)-term
    \item \(c_{\mu}\) appears inside or after \(b_{j+1}\)-term
    \end{caselist}
  \end{caselist}
    
\item \hapair{ha:142221}{ha1:662}
  \begin{caselist}
  \item \(J_{a}=\{\nu\}\)
  \item \(|J_{a}|>1\)  
    \label{ha-s:32.2}
  \end{caselist}

\item \hapair{ha:14232}{ha1:661}
  \begin{caselist}
  \item \(J_{a}=\{\nu\}\)
  \item \(|J_{a}|>1\)  
    \label{ha-s:33.2}
  \end{caselist}
  
\item \hapair{ha:3121}{ha:321}

  This case is analogous to~\ref{ha-s:31}
  
\item \hapair{ha:3221}{ha1:672}
  \begin{caselist}
  \item \(J_{a}=\{\nu\}\)
  \item \(|J_{a}|>1\)  

    This case is analogous to~\ref{ha-s:32.2}
  \end{caselist}

\item \hapair{ha:3222}{phiha:22}
  \begin{caselist}
  \item \(\mu=\nu\)
  \item \(\mu<\nu\)
    
    This case uses \Cref{thm:sign-change-Y}.
    \begin{caselist}
    \item \(c_{\mu}\) appears before last argument of \(b_{n}\)-term
    \item \(c_{\mu}\) appears in \(c\)-product that is last argument of \(b_{n}\)-term
      \label{ha-s:36.2.2}
    \end{caselist}
  \end{caselist}
  
\item \hapair{ha:332}{ha1:671}
  \begin{caselist}
  \item \(J_{a}=\{\nu\}\)
  \item \(|J_{a}|>1\)  

    This case is analogous to~\ref{ha-s:33.2}
  \end{caselist}

\end{caselist}

\section{Proof of Proposition \ref{thm:hga-shc-homcom}}
\label{sec:hga-shc-homcom}

{

\def\a{\Fchoose{b}{a}}
\def\b{\Fchoose{a}{b}}
\def\i{\Fchoose{j}{i}}
\def\j{\Fchoose{i}{j}}

\noindent\textbf{Terms produced by~\(d(\hc_{(n)})\)}
\medskip

\noindent
Recall that there are leading \(E\)-terms if and only if there is an \(F\)-term.
If there is an \(F\)-term, the index of each \(\a\)-variable appearing in it is larger than the index of any \(\b\)-variables appearing in it.
\begin{caselist}
\item Terms produced by group of trailing \(E\)-terms

  The trailing \(E\)-terms may or may not be preceded by a leading \(E\)-group and an \(F\)-term.
  \begin{caselist}
  \item \(\a\)-variable moved out of a trailing \(E_{m}(\b_{\j};\dots)\)-term to the left
    \begin{caselist}
    \item This \(E\)-term is the first \(E\)-term of the trailing group
      \begin{caselist}
      \item There is an \(F\)-term
	\label{hc:1111}
        \cancelswith{hc:32}
      \item There is no \(F\)-term
	\begin{caselist}
          \item \(m=1\)
	    \label{hc:11121}
            \cancelswith{phihc:611}
          \item \(m>1\)
	    \label{hc:11122}
            \cancelswith{hc:361}
        \end{caselist}
      \end{caselist}
    \item This \(E\)-term is a later \(E\)-term of the trailing group
      \label{hc:112}
      \cancelswith{hc:121}
    \end{caselist}
  \item \(\a\)-variable moved out of a trailing \(E\)-term to the right
    \begin{caselist}
    \item \(E\)-term is not final \(E\)-term
      \label{hc:121}
      \cancelswith{hc:112}
    \item \(E\)-term is final \(E\)-term
      \label{hc:122}
      \cancelswith{hcphi:1}

      In this case we can split up the trailing \(E\)-terms in a unique way such that
      the second part is a valid term in~\(\Fchoose{\Phi}{\Phi'=\Phi_{(n-k)}\circ T}\)
    \end{caselist}
  \item Two \(\a\)-variables multiplied together in a trailing \(E\)-term
    \label{hc:13}
    \cancelswith{hc1:11}
  \end{caselist}

\item Terms produced by group of leading \(E\)-terms
  \begin{caselist}
  \item \(\b_{\j}\)-variable moved out of a leading \(E(\a_{\i};\dots)\)-term to the left

    In this case we have \(\i>1\).
    \begin{caselist}
    \item \(\j<\i-1\)
      \label{hc:211}
      \cancelswith{hc:222}
    \item \(\j=\i-1\)
      \label{hc:212}
      \cancelswith{phihc:62}
    \end{caselist}
  \item \(\b\)-variable moved out of a leading \(E\)-term to the right
    \begin{caselist}
    \item This \(E\)-term is the last \(E\)-term of the leading group
      \label{hc:221}
      \cancelswith{hc:33}
    \item This \(E\)-term is an earlier \(E\)-term of the leading group      
      \label{hc:222}
      \cancelswith{hc:211}
    \end{caselist}
  \item Two \(\b\)-variables multiplied together in a leading \(E\)-term
    \label{hc:23}
    \cancelswith{hc1:11}

    In this case the corresponding \(\a\)-variables appear further to the left.
  \end{caselist}
  
\item Terms produced by \(F\)-term
  \begin{caselist}
  \item \(E(\a_{\i};\dots)\)-term split off \(F\)-term to the left
    \label{hc:31}
    \cancelswith{hc:34}

    Note the first \(\b_{\j}\)variable of the trailing \(E\)-terms has index~\(\j\ge \i\).
  \item \(\a\)-variable moved out of \(F\)-term to the right
    \label{hc:32}
    \cancelswith{hc:1111}
  \item \(\b\)-variable moved out of \(F_{kl}(\a_{\i},\dots;b_{\j},\dots)\)-term to the left
    \label{hc:33}
    \cancelswith{hc:221}

    In this case we have \(l>1\), so that \(\j<\i-1\).
  \item \(E(\b_{\j};\dots)\)-term split off \(F\)-term to the right
    \label{hc:34}
    \cancelswith{hc:31}

    Note the last \(\a_{\i}\)variable of the leading \(E\)-terms has index~\(\i\ge \j\).
  \item \(F_{1l}\)-term converted into \(E_{l}(\a;\dots)\)-term with~\(l\ge1\)
    \begin{caselist}
    \item The next term is a \(\b\)-variable
      \label{hc:351}
      \cancelswith{phihc:612}
    \item The next term is an \(E_{m}(\b;\dots)\)-term with~\(m\ge1\)
      \label{hc:352}
      \cancelswith{hc:362}
    \end{caselist}
  \item \(F_{k1}\)-term converted into \(E_{k}(\b_{\j};\dots)\)-term with~\(k\ge1\)
    \begin{caselist}
    \item The previous \(E\)-term is an \(\a\)-variable
      \label{hc:361}
      \cancelswith{hc:11122}

      In this case the \(\a\)-variable is the only leading \(E\)-term because otherwise \(\b_{\j-1}\) would appear in the \(F\)-term.
    \item The previous \(E\)-term is an \(E_{m}(\a;\dots)\)-term with~\(m\ge1\)
      \label{hc:362}
      \cancelswith{hc:352}
    \end{caselist}
  \item Two \(\a\)-variables multiplied together
    \label{hc:37}
    \cancelswith{hc1:22}

    In this case the corresponding \(\b\)-variables occur in trailing \(E\)-terms
  \item Two \(\b\)-variables multiplied together
    \label{hc:38}
    \cancelswith{hc1:12}

    In this case the corresponding \(\a\)-variables occur in leading \(E\)-terms
  \end{caselist}
\end{caselist}

\bigbreak
\noindent\textbf{Terms appearing in~\(\hc_{(n-1)}(\dots,a_{i}a_{i+1}\otimes b_{i}b_{i+1},\dots)\)}
\medskip

\noindent
Either the repeated \(\a\)-terms appear in a leading \(E\)-term or the repeated \(\b\)-terms appear in a trailing \(E\)-term.
\begin{caselist}[resume]
\item The repeated \(\a\)-terms appear in a leading \(E\)-term
  \begin{caselist}
  \item The repeated \(\b\)-terms appear in a leading \(E\)-term
    \label{hc1:11}
    \cancelswith{hc:23}
  \item The repeated \(\b\)-terms appear in the \(F\)-term
    \label{hc1:12}
    \cancelswith{hc:38}
  \end{caselist}
\item The repeated \(\b\)-terms appear in a trailing \(E\)-term
  \begin{caselist}
  \item The repeated \(\a\)-terms appear in the trailing \(E\)-term
    \label{hc1:21}
    \cancelswith{hc:13}
  \item The repeated \(\a\)-terms appear in the \(F\)-term
    \label{hc1:22}
    \cancelswith{hc:37}
  \end{caselist}
\end{caselist}

\bigbreak
\noindent\textbf{Terms appearing in~\(\Fchoose{\Phi'}{\Phi}_{(m)}\,\hc_{(n-m)}\)}
\medskip

\Fchoose{\noindent Here \(\Phi'=\Phi\circ T\).}{}
\begin{caselist}[resume]
\item \(\hc_{(n-m)}\) does not contain an \(F\)-term
  \begin{caselist}
  \item \(m=1\)
    \label{phihc:611}
    \cancelswith{hc:11121}
  \item \(m>1\)
    \label{phihc:612}
    \cancelswith{hc:351}
  \end{caselist}
\item \(\hc_{(n-m)}\) contains an \(F\)-term
  \label{phihc:62}
  \cancelswith{hc:212}
\end{caselist}

\bigbreak
\noindent\textbf{Terms appearing in~\(\hc_{(m)}\,\Fchoose{\Phi}{\Phi'}_{(n-m)}\)}
\medskip
\Fchoose{}{\noindent Here \(\Phi'=\Phi\circ T\).}
\begin{caselist}[resume]
\item all such terms
  \label{hcphi:1}
  \cancelswith{hc:122}
\end{caselist}

}

\section{Proof of Proposition \ref{thm:poly1-Ai-strict}}
\label{sec:poly1-Ai-strict}

We assume \(n\ge1\).

\medbreak

\noindent\textbf{Terms produced by~\(d(h_{(n)})\)}
\medskip

\begin{caselist}
\item Two arguments of~\(f_{(n+1)}\) multiplied together
  \begin{caselist}
  \item At position~\(m\le n-2\) (if \(n\ge3\))
    \label{pg:11}
    \cancelswith{pg1:1}
  \item At position~\(m=n-1\) (if \(n\ge2\))
    \label{pg:12}
    \cancelswith{pg1:21}
  \item At position~\(m=n\)
    \begin{caselist}
    \item \(k''=0\)
      \label{pg:131}
      \cancelswith{fpg:2}
    \item \(k''>0\)
      \label{pg:132}
      \cancelswith{pg:222}
    \end{caselist}
  \end{caselist}
\item \(f_{(n+1)}\) split into two terms
  \begin{caselist}
  \item At position~\(m\le n-1\) (if \(n\ge2\))
    \label{pg:21}
    \cancelswith{fpg:1}
  \item At position~\(m=n\)
    \begin{caselist}
    \item \(k'=0\)
      \label{pg:221}
      \cancelswith{pgg:0}

      By the normalization condition, this term vanishes unless \(n=1\).
    \item \(k'>0\)
      \label{pg:222}
      \cancelswith{pg:132}
    \end{caselist}
  \end{caselist}
\end{caselist}

\bigbreak
\noindent\textbf{Terms appearing in~\(h_{(n-1)}(\dots,x^{k_{m}+k_{m+1}},\dots)\)}
\medskip

\begin{caselist}[resume]
\item \(m\le n-2\) (if \(n\ge3\))
  \label{pg1:1}
  \cancelswith{pg:11}
\item \(m=n-1\) (if \(n\ge2\))
  \begin{caselist}
  \item \(k''<k_{n}\)
    \label{pg1:21}
    \cancelswith{pg:12}
  \item \(k''\ge k_{n}\)
    \label{pg1:22}
    \cancelswith{pgg:1}
    \end{caselist}
\end{caselist}

\bigbreak
\noindent\textbf{Terms appearing in~\(f_{(m)}\,h_{(n-m)}\)}
\medskip

\begin{caselist}[resume]
\item \(m<n\) (if \(n\ge2\))
  \label{fpg:1}
  \cancelswith{pg:21}
\item \(m=n\)
  \label{fpg:2}
  \cancelswith{pg:131}
\end{caselist}

\bigbreak
\noindent\textbf{Terms appearing in~\(h_{(m)}\,g_{(n-m)}\)}
\medskip

\noindent
Since \(g\) is strict, the only non-zero contribution is for~\(m=n-1\).

\begin{caselist}[resume]
\item \(n=1\)
  \label{pgg:0}
  \cancelswith{pg:221}
\item \(n\ge2\)
  \label{pgg:1}
  \cancelswith{pg1:22}
\end{caselist}

\section{Proof of Proposition \ref{thm:poly1-to-dga-shc}}
\label{sec:poly1-to-dga-shc}

\def\omark{{}^{\boldsymbol*}}
\def\othree{\(\mathllap{\omark}\)\ }
\def\ofour{\(\mathllap{\omark}\omark\)\ }

We write \(F=\mu_{A}\,(f\otimes f)\) and \(g=f\,\mu_{\kk[x]}\).
There is nothing to show for~\(n=1\) since \(F_{(1)}=g\).
Below we assume \(n\ge2\). Terms appearing only for~\(n\ge3\) are marked ``\(\omark\)''
and those appearing only for~\(n\ge4\) are marked ``\(\omark\omark\)''.

\medskip

\noindent\textbf{Terms produced by~\(d(h_{(n)})\)}
\medskip

\begin{caselist}

\item Terms produced by the first group of sums
  \begin{caselist}
  \item Argument moved out of some~\(E(a^{k_{i}};\dots)\) to the left

    In this case with have \(i>1\).
    \begin{caselist}
    \item \othree \(i\le n-1\)

      In this case the argument is some~\(a^{l_{j}}\).
      \begin{caselist}
      \item \ofour \(j<i-1\)
	\label{hp:1111}
        \cancelswith{hp:121}
      \item \othree \(j=i-1\)
	\label{hp:1112}
        \cancelswith{Fh:11}
      \end{caselist}
    \item \(i=n\)
      \begin{caselist}
      \item \(i_{n}=1\)
	\label{hp:1121}
	\cancelswith{hp:2.4}
      \item \othree \(i_{n}>1\)
	\label{hp:1122}
	\cancelswith{hp:122}
 
        In this case the argument is some~\(a^{l_{j}}\).
      \end{caselist}
    \end{caselist}
    
    
  \item Argument moved out of some~\(E(a^{k_{i}};\dots)\) to the right
    \begin{caselist}
    \item \ofour \(i\le n-2\)
      \label{hp:121}
      \cancelswith{hp:1111}
    \item \othree \(i=n-1\)
      \label{hp:122}
      \cancelswith{hp:1122}
    \item \(i=n\)
      \label{hp:123}
      \cancelswith{hp:1332}

      We may assume \(l'\ge 1\) in this case.
    \end{caselist}

  \item Two arguments of some~\(E(a^{k_{i}};\dots)\) multiplied together
    \begin{caselist}
    \item \ofour Arguments are \(a^{l_{j}}\) and~\(a^{l_{j+1}}\) with~\(j\le n-3\)
      \label{hp:131}
      \cancelswith{hp1:11}
    \item \othree Arguments are \(a^{l_{n-2}}\) and~\(a^{l'}\)
      \label{hp:132}
      \cancelswith{hp1:121}

      In this case we have \(i_{n}>1\).
    \item Arguments are \(a^{l'}\) and~\(a\)
      \begin{caselist}
      \item \(l''=0\)
	\label{hp:1331}
	\cancelswith{Fh:2}
      \item \(l''>0\)
	\label{hp:1332}
	\cancelswith{hp:123}
      \end{caselist}
    \end{caselist}

  \end{caselist}
  
\item Terms produced by the second group of sums
  \begin{caselist}
  \item \othree Argument~\(a^{l_{j}}\) moved out of some~\(E(a^{k_{i}};\dots)\) to the left

    In this case with have \(i>1\).
    \begin{caselist}
    \item \ofour \(j<i-1\)
      \label{hp:211}
      \cancelswith{hp:221}
    \item \othree \(j=i-1\)
      \label{hp:212}
      \cancelswith{Fh:21}
    \end{caselist}
  \item Argument moved out of some~\(E(a^{k_{i}};\dots)\) to the right
    \begin{caselist}
    \item \ofour \(i\le n-2\)
      \label{hp:221}
      \cancelswith{hp:211}
    \item \othree \(i=n-1\)
      \begin{caselist}
      \item \othree \(i_{n-1}=1\)
	\label{hp:2221}
        \cancelswith{hp1:231}
      \item \othree \(i_{n-1}>1\)
	\label{hp:2222}
        \cancelswith{hp1:221}
      \end{caselist}
    \end{caselist}
  \item \ofour Two arguments~\(a^{l_{j}}\) and~\(a^{l_{j+1}}\) of some~\(E(a^{k_{i}};\dots)\) multiplied together
    \label{hp:23}
    \cancelswith{hp1:21}

    In this case we have \(j\le n-3\).
  \item Term produced by~\(b\)
    \label{hp:2.4}
    \cancelswith{hp:1121}

    We have \(\sum_{k'+k''=k_{n}-1}a^{k'}\,E_{1}(a;a)\,a^{k''}=E_{1}(a^{k_{n}};a)\).
  \end{caselist}

\item \othree Terms produced by the third group of sums
  \begin{caselist}
  \item \othree Argument moved out of some~\(E(a^{k_{i}};\dots)\) to the left

    In this case with have \(i>1\).
    \begin{caselist}
    \item \ofour  \(i\le n-2\)

      In this case the argument is some~\(a^{l_{j}}\).
      \begin{caselist}
      \item \ofour \(j<i-1\)
	\label{hp:3111}
        \cancelswith{hp:321}
      \item \ofour \(j=i-1\)
	\label{hp:3112}
        \cancelswith{Fh:3}
      \end{caselist}
    \item \othree \(i=n-1\)
      \begin{caselist}
      \item \othree \(i_{n-1}=1\)
	\label{hp:3121}
	\cancelswith{hp:3231}
      \item \othree \(i_{n-1}>1\)
	\label{hp:3122}
	\cancelswith{hp:322}
 
        In this case the argument is some~\(a^{l_{j}}\).
      \end{caselist}
    \end{caselist}

  \item \othree Argument moved out of some~\(E(a^{k_{i}};\dots)\) to the right

    \begin{caselist}
    \item \ofour \(i\le n-3\)
      \label{hp:321}
      \cancelswith{hp:3111}
    \item \othree \(i=n-2\)
      \label{hp:322}
      \cancelswith{hp:3122}
    \item \othree \(i=n-1\)
      \begin{caselist}
      \item \othree \(i_{n-1}=1\)
	\label{hp:3231}
        \cancelswith{hp:3121}
      \item \othree \(i_{n-1}>1\)
	\label{hp:3232}
        \cancelswith{hp1:222}
      \end{caselist}
    \end{caselist}
    
  \item \ofour Two arguments of some~\(E(a^{k_{i}};\dots)\) multiplied together
    \begin{caselist}
    \item \ofour Arguments are \(a^{l_{j}}\) and~\(a^{l_{j+1}}\) with~\(j\le n-4\)
      \label{hp:331}
      \cancelswith{hp1:31}
    \item \ofour Arguments are \(a^{l_{n-3}}\) and~\(a^{l'}\)
      \label{hp:332}
      \cancelswith{hp1:321}

      In this case we have \(i_{n-1}>1\).
    \end{caselist}

  \item \othree Term produced by~\(b\)
    \label{hp:34}
    \cancelswith{hp1:132}

    We have \(\sum_{k'+k''=k_{n}-1}a^{k'}\,E_{1}(a;a)\,a^{k''}=E_{1}(a^{k_{n}};a)\).
  \end{caselist}

\end{caselist}

\bigbreak
\noindent\textbf{Terms appearing in~\(h_{(n-1)}(\dots,a^{k_{i}+k_{i+1}}\otimes a^{l_{i}+l_{i+1}},\dots)\)}
\medskip

\begin{caselist}[resume]

\item \othree Terms produced by the first group of sums
  \begin{caselist}
  \item \ofour \(i\le n-3\)
    \label{hp1:11}
    \cancelswith{hp:131}
  \item \othree \(i=n-2\)

    We have either \(l'\ge l_{n-2}\) or \(l''\ge l_{n-1}\).
    \begin{caselist}
    \item \othree \(l'\ge l_{n-2}\)
      \label{hp1:121}
      \cancelswith{hp:132}
    \item \othree \(l''\ge l_{n-1}\)
      \label{hp1:122}
      \cancelswith{hp1:133}
    \end{caselist}
  \item \othree \(i=n-1\)

    We decompose the term~\(E_{i_{n-1}+1}(a^{k_{n-1}+k_{n}};\dots,a^{l_{n-3}},a^{l'},a)\) into the following three kinds of terms.
    \begin{caselist}
    \item \othree \(E_{i_{n-1}+1}(a^{k_{n-1}};\dots,a^{l_{n-3}},a^{l'},a)\,a^{k_{n}}\)
      \label{hp1:131}
      \cancelswith{hg:1}
    \item \othree \(E_{i_{n-1}}(a^{k_{n-1}};\dots,a^{l_{n-3}},a^{l'})\,E_{1}(a^{k_{n}};a)\)
      \label{hp1:132}
      \cancelswith{hp:34}
    \item \othree \(E_{i_{n-1}}(a^{k_{n-1}};a^{l_{\bullet}})\,E_{i_{n}+1}(a^{k_{n}};\dots,a^{l_{n-3}},a^{l'},a)\) with~\(i_{n}\ge1\)
      \label{hp1:133}
      \cancelswith{hp1:122}
    \end{caselist}
  \end{caselist}
  
\item \othree Terms produced by the second group of sums
  \begin{caselist}
  \item \ofour \(i\le n-3\)
    \label{hp1:21}
    \cancelswith{hp:23}
  \item \othree \(i=n-2\)
    
    We have either \(l'\ge l_{n-2}\) or \(l''\ge l_{n-1}\).
    \begin{caselist}
    \item \othree \(l'\ge l_{n-2}\)
      \label{hp1:221}
      \cancelswith{hp:2222}
    \item \othree \(l''\ge l_{n-1}\)
      \label{hp1:222}
      \cancelswith{hp:3232}
    \end{caselist}
  \item \othree \(i=n-1\)

    We have either \(k'\ge k_{n-1}\) or \(k''\ge k_{n}\).
    \begin{caselist}
    \item \othree \(k'\ge k_{n-1}\)
      \label{hp1:231}
      \cancelswith{hp:2221}
    \item \othree \(k''\ge k_{n}\)
      \label{hp1:232}
      \cancelswith{hg:2}
    \end{caselist}
  \end{caselist}

\item \ofour Terms produced by the third group of sums
  \begin{caselist}
  \item \ofour \(i\le n-4\)
    \label{hp1:31}
    \cancelswith{hp:331}
  \item \ofour \(i=n-3\)

    We have either \(l'\ge l_{n-3}\) or \(l''\ge l_{n-2}\).
    \begin{caselist}
    \item \ofour \(l'\ge l_{n-3}\)
      \label{hp1:321}
      \cancelswith{hp:332}
    \item \ofour \(l''\ge l_{n-2}\)
      \label{hp1:322}
      \cancelswith{hp1:332}
    \end{caselist}
  \item \ofour \(i=n-2\)

    We decompose the term~\(E_{i_{n-2}}(a^{k_{n-2}+k_{n-1}};\dots,a^{l_{n-4}},a^{l'})\) into the following two kinds of terms.
    \begin{caselist}
    \item \ofour \(E_{i_{n-2}}(a^{k_{n-2}};\dots,a^{l_{n-4}},a^{l'})\,a^{k_{n-1}}\)
      \label{hp1:331}
      \cancelswith{hp1:341}
    \item \ofour \(E_{i_{n-2}}(a^{k_{n-2}};a^{l_{\bullet}})\,E_{i_{n-1}}(a^{k_{n-1}};\dots,a^{l_{n-4}},a^{l'})\) with~\(i_{n-1}\ge1\)
      \label{hp1:332}
      \cancelswith{hp1:322}
    \end{caselist}
  \item \ofour \(i=n-1\)

    We have either \(k'\ge k_{n-1}\) or \(k''\ge k_{n}\).
    \begin{caselist}
    \item \ofour \(k'\ge k_{n-1}\)
      \label{hp1:341}
      \cancelswith{hp1:331}
    \item \ofour \(k''\ge k_{n}\)
      \label{hp1:342}
      \cancelswith{hg:3}
    \end{caselist}
  \end{caselist}

\end{caselist}

\bigbreak
\noindent\textbf{Terms appearing in~\(F_{(m)}\,h_{(n-m)}\)}
\medskip

\noindent
Recall that \(h_{(1)}=0\).
\begin{caselist}[resume]
\item \othree \(1\le m\le n-2\)
  \begin{caselist}

  \item \othree Terms produced by the first group of sums in~\(h\)
    \label{Fh:11}
    \cancelswith{hp:1112}
    
  \item \othree Terms produced by the second group of sums in~\(h\)
    \label{Fh:21}
    \cancelswith{hp:212}
    
  \item \ofour Terms produced by the third group of sums in~\(h\)
    \label{Fh:3}
    \cancelswith{hp:3112}

  \end{caselist}
  
\item \(m=n\)
  \label{Fh:2}
  \cancelswith{hp:1331}
\end{caselist}

\bigbreak
\noindent\textbf{Terms appearing in~\(h_{(n-1)}\,g\)}
\medskip

\begin{caselist}[resume]
\item \othree Terms produced by the first group of sums in~\(h\)
  \label{hg:1}
  \cancelswith{hp1:131}
  
\item \othree Terms produced by the second group of sums in~\(h\)
  \label{hg:2}
  \cancelswith{hp1:232}
  
\item \ofour Terms produced by the third group of sums in~\(h\)
  \label{hg:3}
  \cancelswith{hp1:342}
  
\end{caselist}

\end{document}